\newcommand\lasso{\widehat{\bbeta}}
\newcommand\wS{\widehat{S}}
\newcommand\fsset{F}
\newcommand{\pr}{\operatorname{P}}
\definecolor{wjs}{RGB}{0,0,255}
\newcommand{\bzero}{\bm{0}}
\newcommand{\bV}{\bm{V}}
\newcommand{\bD}{\bm{D}}
\newcommand{\C}{\mathcal{C}}
\newcommand{\tr}{\operatorname{\textrm{tr}}}
\title{When Is the First Spurious Variable Selected by Sequential Regression Procedures?}
\author{Weijie J.~Su}
\date{}
\begin{document}
\maketitle

{\centering
\vspace*{-0.5cm} 
Department of Statistics, University of Pennsylvania, Philadelphia, PA 19104, USA
\par\bigskip 
\date{July 11, 2018}\par
}

\begin{abstract}
Applied statisticians use sequential regression procedures to produce a ranking of explanatory variables and, in settings of low correlations between variables and strong true effect sizes, expect that variables at the very top of this ranking are truly relevant to the response. In a regime of certain sparsity levels, however, three examples of sequential procedures---forward stepwise, the lasso, and least angle regression---are shown to include the first spurious variable unexpectedly early. We derive a rigorous, sharp prediction of the rank of the first spurious variable for these three procedures, demonstrating that the first spurious variable occurs earlier and earlier as the regression coefficients become denser. This counterintuitive phenomenon persists for statistically independent Gaussian random designs and an arbitrarily large magnitude of the true effects. We gain a better understanding of the phenomenon by identifying the underlying cause and then leverage the insights to introduce a simple visualization tool termed the ``double-ranking diagram'' to improve on sequential methods.

As a byproduct of these findings, we obtain the first provable result certifying the exact equivalence between the lasso and least angle regression in the early stages of solution paths beyond orthogonal designs. This equivalence can seamlessly carry over many important model selection results concerning the lasso to least angle regression.

\end{abstract}
{\bf Keywords.} Lasso; Least angle regression; Forward stepwise regression; False variable; Familywise error rate.

\section{Introduction}\label{sec:introduction}

Consider observing an $n$-dimensional response vector $\by$ that is generated by a linear model
\[
\by = \bX \bbeta+ \bz,
\]
where $\bX \in \R^{n \times p}$ is a design matrix, $\bbeta \in \R^p$ is a vector of regression coefficients, and $\bz \in \R^n$ is a noise term. To find explanatory variables that are associated with the response $\by$, especially in the setting where $p > n$, three sequential regression procedures are frequently used: forward stepwise regression,  the lasso \citep{tibshirani1996regression}, and least angle regression \citep{efron2004least}. These popular methods build a model by sequentially adding or removing variables based upon some criterion. In a very natural way, a sequential method ranks explanatory variables according to when the variables enter the solution path. With this ranking of variables in place, a routine practice for forming the final model is to select all variables ranked earlier than a certain cutoff and discard the rest.

When running a sequential procedure, a practitioner often wishes to understand where along the solution path noise variables (regressors with zero regression coefficients) start to enter the model. In particular, when is the first noise variable selected? A better understanding of this problem is desirable from at least two perspectives. First, the rank of the first noise variable sheds light on the difficulty of consistent model selection, offering guidelines for selecting important variables. More precisely, if the rank is about the same size as the sparsity (the total number of nonzero regression coefficients), we could obtain a model retaining most of the important variables without causing many false selections using a sequential method, whereas a small rank implies that signal variables (regressors with nonzero regression coefficients) and noise variables are interspersed early on in the solution path and, as a result, a false selection must occur long before the power reaches one. Second, the empirical performance of numerous tools for post-selection inference in linear regression is, to a large extent, contingent upon whether the first noise variable occurs early or not \citep{lockhart2014significance,g2016sequential,tibshirani2016exact}. Insights into the occurrence of the first false variable would be valuable for improving these tools and developing new ones.

However, despite an extensive body of work on these sequential methods, the literature remains relatively silent on questions of the first false variable. Existing results address these questions in a limited setting, mostly characterizing under what conditions all the signal variables precede the first noise variable, that is, perfect support recovery or, put more simply, selecting the exactly correct model. Specifically, this set of results guarantees perfect support recovery using a certain sequential method provided sufficiently strong effect sizes compared to the noise level and a form of local orthogonality of the design matrix. These results can be found for both the lasso \citep{zhao2006model,bickel2009simultaneous,wainwright2009sharp} and forward stepwise \citep{tropp2004greed,zhang2009consistency,cai2011orthogonal}.

Figure \ref{fig:intro} illustrates a simulation study that examines when the first noise variable gets selected by the lasso. The design matrix $\bX$ is of size $2000 \times 1800$ consisting of independent $\N(0, 1/2000)$ entries, the noise term $\bz$ is comprised of independent standard normals, and the regression coefficients are set to $\beta_1  = \cdots = \beta_k = 100\sqrt{2\log(1800)} = 387.2$ and $\beta_j = 0$ for all $j > k$, with the sparsity $k$ varying from $10$ to $320$. Note that the true effect sizes can be practically thought of as infinitely strong and the sample correlations between the regressors are small due to the independence. Fig.~\ref{fig:intro} shows that, in the low sparsity regime, the pairs (sparsity, rank) lie close to the $45^{\circ}$ line
(precisely, it is the line $y = x + 1$). This behavior is equivalent to saying that all the signal variables are selected prior to any false variables, which is in perfect agreement with a copious body of theoretical results available in the literature.

\begin{figure}[h!]
\centering
\includegraphics[scale=0.4]{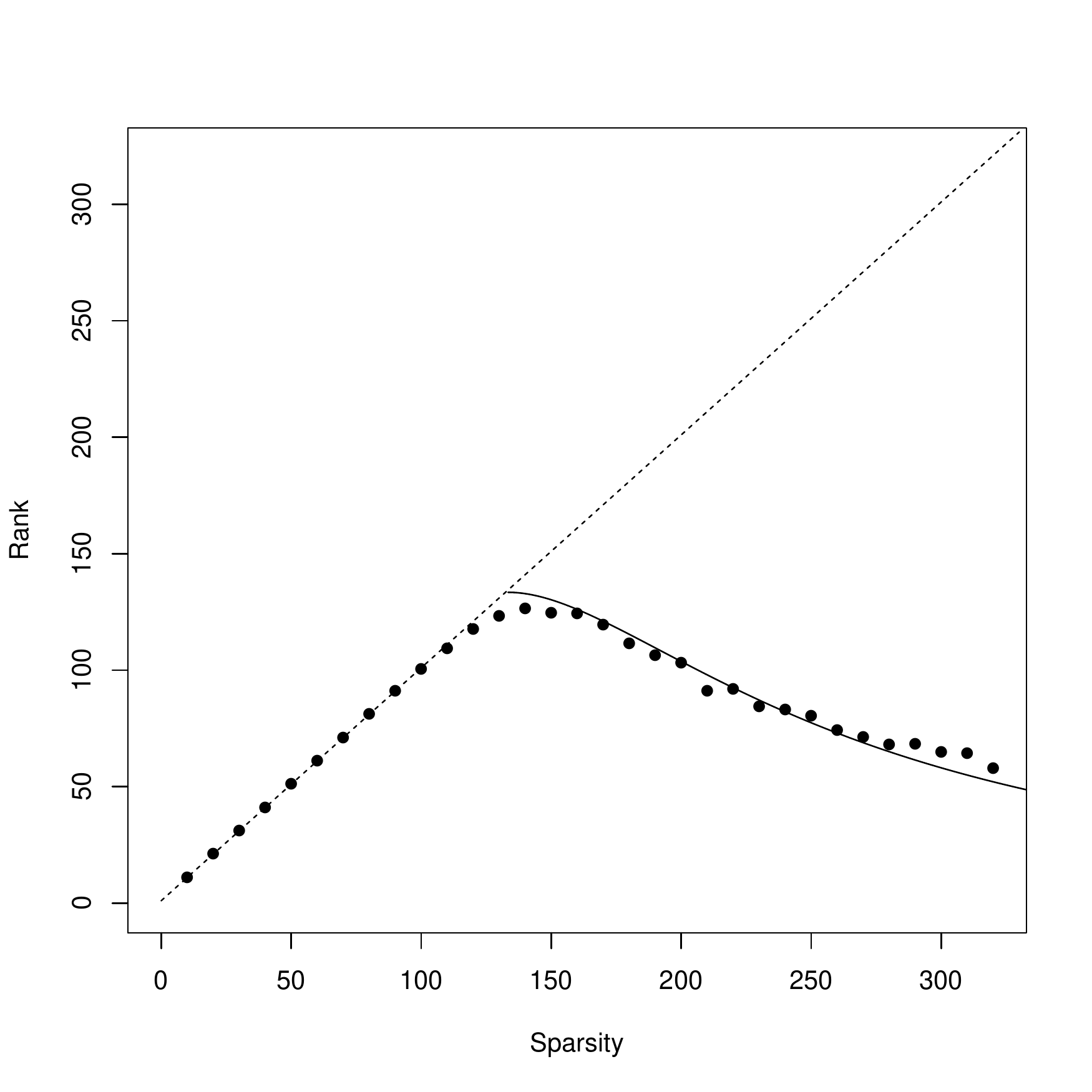}
\caption{Rank of the first spurious variable along the lasso path. Recall that the rank equals one plus the number of signal variables preceding the first spurious variable. We plot averages from 500 independent
  replicates as dots. The $45^\circ$ dashed line is shown for
comparison. The solid line plots $\exp(\sqrt{(2n\log p)/k} - n/(2k) + \log(n/(2 p \log p)))$ as a function of $k$, starting from $n/(2\log p)$.}
\label{fig:intro}
\end{figure}

Strikingly, once the sparsity exceeds a certain level (around $140$ in the example), a phenomenon that is not explained by existing theory occurs: the average rank of the first noise variable becomes substantially smaller than the sparsity $k$ and, more surprisingly, the rank keeps decreasing as the sparsity increases. This phenomenon clearly demonstrates the impossibility of perfect support recovery in this non-extreme sparsity regime using the lasso, even though it is under high signal-to-noise ratios and low correlations. Presumably, as the signal $\bbeta$ is amplified by setting more components to a large magnitude, one might instinctively anticipate that a sequential method such as the lasso tends to include more signal variables at the beginning and, thus, would imagine that
the first noise variable would get selected later and later. Unfortunately, the counterintuitive results as shown in Fig.~\ref{fig:intro} falsify this belief. We remark that a similar phenomenon is observed earlier in \citet{su2015false}, although they did not provide any justification for the observation.

Thus, concrete predictions and explanations are needed to better understand and improve sequential methods in this sparsity regime. In response, we derive an analytical prediction that is asymptotically \textit{exact} for the first noise variable. The prediction applies to the three methods under our consideration, namely forward stepwise, the lasso, and least angle regression, and potentially to other sequential methods. Denote by $T$ the rank of the first noise variable. Informally, the prediction states that, in the setting of strong effect sizes and statistically independent regressors as in Fig.~\ref{fig:intro}, the three sequential procedures in the non-extreme sparsity regime all satisfy
\begin{equation}\label{eq:t_pred_intro}
\log T \approx  \sqrt{\frac{2n\log p}{k}} - \frac{n}{2k} + \log\frac{n}{2 p \log p}.
\end{equation}
The formal statement of this result is given in Theorem \ref{thm:lower} in \S~\ref{sec:theory-underst-phen}. 

The prediction of $T$ is additionally presented in Fig.~\ref{fig:intro}, showing excellent agreement between the predicted and observed behaviors. To better appreciate this result, note that the quantity as an approximation to $\log T$ in \eqref{eq:t_pred_intro} is smaller than $\log k$ once the sparsity $k$ exceeds $n/(2\log p)$, suggesting the impossibility of perfect support recovery in this regime. This is consistent with the negative result in Corollary 2 of \citet{wainwright2009sharp}. The prediction \eqref{eq:t_pred_intro}, however, implies more. To show this, alternatively write the right-hand side of \eqref{eq:t_pred_intro} as
\[
\sqrt{\frac{2n\log p}{k}} - \frac{n}{2k} + \log\frac{n}{2 p \log p}= - \left[  \sqrt{\log p} - \sqrt{\frac{n}{2k}}\right]^2 + \log \frac{n}{2\log p}.
\]
The expression above reveals that the predicted $\log T$ decreases as the sparsity $k \ge n/(2\log p)$ increases. Put differently, the first noise variable is bound to occur earlier as the signal vector $\bbeta$ gets denser, successfully predicting the phenomenon shown in Fig.~\ref{fig:intro}. While problems in selecting the true model by the lasso have been empirically documented in earlier work \citep{fan2010sure}, such sharp and analytical predictions are not available in the literature, perhaps due to technical difficulties.

This result has several implications. First, once the underlying signals go beyond the very sparse regime, using sequential procedures would inevitably lead to a very low power with familywise error rate control, which is the probability of selecting one or more noise variable, no matter how large the effect sizes are. Taking a simple example in which both $n$ and $p$ are set to be equal and large and $k = \epsilon p$ for some fixed $0 < \epsilon < 1$, the prediction asserts that the first false variable is included after no more than
\[
\exp\left[(1+o(1))\left(\sqrt{2(\log p)/\epsilon} - 1/(2\epsilon) - \log(2\log p)\right)\right] = \exp\left[(1+o(1))\sqrt{2(\log p)/\epsilon}\right]
\] steps (note that $\sqrt{2\log p/\epsilon}$ is the leading component for a large $p$). For a fixed $\epsilon$, however, the predicted rank $\exp\left[(1+o(1))\sqrt{2\log p/\epsilon}\right]$ only accounts for a vanishing fraction of the $k = \epsilon p$ signal variables, which can be gleaned from the fact that $\sqrt{2\log p/\epsilon} = o(\log p)$. In other words, the three sequential methods being considered yield vanishing power if no noise variable is allowed to be included, even in the noiseless case ($\bz = \bzero$). In particular, these negative results are derived under Gaussian designs with independent columns, which have vanishing sample correlations and satisfy some conditions believed to
be favorable for model selection, including restricted isometry properties \citep{CanTaoDecode05} and restricted eigenvalue conditions \citep{bickel2009simultaneous}. Thus, the negative results are likely to carry over to a much broader class of design matrices. In fact, extensive simulations carried out in \S~\ref{sec:examples} demonstrate that problems of the first false variable are only exacerbated in more general settings.

Another implication yielded by this prediction is that the three sequential regression methods seem to behave similarly in ranking variables, at least in the independent random design setting. Compared with forward stepwise, the lasso and least angle regression, along with their infinitesimal version forward stagewise regression (see, for example, \citet{efron2004least}), are long-time considered less greedy because at each step they gradually blend in a new variable instead of adding it discontinuously \citep{efron2004least}. To be more precise, the forward stepwise selects the predictor with the largest absolute correlation with the residual vector and then aggressively takes a large step in the direction of the selected predictor, whereas the others proceed in a more democratic
manner along a direction equiangular between the set of selected predictors (the lasso and forward stagewise regression bear certain restrictions on this equiangular approach). This critical distinction between the two strategies is anticipated---or, at least wished---to lead to contrasting model selection performance. Interestingly, this is \textit{not} the case; these two strategies yield the same behavior of selecting the first noise variables in our setting. As a byproduct, we obtain Theorem \ref{thm:unique} for the lasso and least angle regression, which, to the best of our knowledge, is the first mathematically provable result certifying the exact equivalence between early solution paths of these two procedures beyond orthogonal designs.

In the non-extreme sparsity regime, why do these distinct sequential methods select the first false variable so early? Taking a closer look at the derivation of the prediction, we can identify the cause, which is, loosely speaking, due to the greedy nature of these sequential regression methods. Moreover, the equiangular strategy adopted by the lasso and least angle regression fails to alleviate greediness from the perspective of when the first noise variable gets selected. To shed light on this cause, recall that all the three methods at each time include a variable that roughly has the largest absolute inner product with the current residuals. As the regression coefficients get denser, the solution at the beginning of the path is overwhelmingly biased and the residual vector absorbs many of the true effects contributed by
the nonzero components of $\bbeta$. As a result, some irrelevant variable would exhibit high correlations with the residuals and hence is selected incorrectly and early. That being said, it requires several novel ideas to precisely characterize what we describe here.

With this underlying cause in mind and to improve on sequential methods, we introduce the double-ranking diagram to identify early false variables along solution paths. In slightly more detail, this diagram contrasts the rank of each variable given by a sequential procedure (horizontal axis) with that given by a low-bias estimator (vertical axis) such as the least-squares estimator. In spite of a significant horizontal rank, an early noise variable might be revealed by its possibly less significant vertical rank. Related ideas have appeared in recent variable screening work, for instance, \citet{wang2016high}. We demonstrate the usefulness of this diagram via a mix of theoretical and empirical results.


\section{Understanding the Phenomenon}
\label{sec:theory-underst-phen}

\subsection{Predicting the first spurious variable}
\label{sec:pred-first-noise}
We consider a sequence of problems indexed by $(k_l, n_l, p_l)$, where $k_l, n_l$, and $p_l$ are all assumed to grow to infinity as $l \goto \infty$ in asymptotic statements. The subscript $l$ is often omitted when clear from the context. Letters $c_i$ and $C_i$ in various settings denote positive constants that do not depend on the problem index $l$. Below we formalize our working hypothesis concerning the linear model $\by = \bX\bbeta + \bz$.

\begin{assumption}\label{ass:working}
The design $\bX \in \R^{n \times p}$ has independent $\N(0, 1/n)$ entries and $\bz \in \R^n$ consists of independent $\N(0, \sigma^2)$ errors. We further assume $\bX$ and $\bz$ are independent. The coefficient vector $\bbeta$ has $k$ fixed components equal to some $M \ne 0$ and the rest are all zero. Last, we assume $c_1p/\log^{c_2} p \le n \le c_3 p$ and $c_4 n \le k \le \min\{0.99 p, c_5 n \log^{0.99} p\}$ for arbitrary positive constants $c_1, c_2, c_3, c_4$, and $c_5$.
\end{assumption}

The assumption on $(k, n, p)$ is satisfied in some popular examples studied in the literature, for instance, the linear sparsity framework where $k/p$ and $n/p$ converge to some constants \citep{bayati2012lasso}. Moreover, a number of cases leading to $k = o(p)$ satisfy Assumption \ref{ass:working}, for instance, $n = c_1 p/\log^{c_2} p$ and $k = c_4 n$. Under this assumption, each column of $\bX$ is approximately normalized, having about unit Euclidean norm. This random design is conventionally considered to be easy for model selection since it obeys restricted isometry properties \citep{CanTaoDecode05} or restricted eigenvalue conditions \citep{bickel2009simultaneous} with high probability. The nonrandom
parameters $\sigma \ge 0$ and $M$ both implicitly depend on the index $l$ and thus are allowed to vary freely. In particular, the noiseless case $\sigma = 0$ is not excluded, in which the signal-to-noise ratio is essentially infinite.  For completeness, the number $0.99$ can be replaced by any positive number smaller than 1.

Before presenting our main results Theorems \ref{thm:upper} and \ref{thm:lower}, we give a brief overview of the three methods for ease of reading. In broad outlines, least angle regression increases the coefficients of included variables in their joint least squares direction until an unselected variable has as much inner product with the residuals, which is included in the next step. Least angle regression stops when the residuals are zero or all variables are included. If a nonzero coefficient is removed from the active set whenever it hits zero, this adjustment leads to the lasso, which is better-known as the minimizer of the convex program $\frac12 \|\by - \bX \bb\|_2^2 + \lambda \| \bb\|_1$ over $\bb \in \R^p$ for $\lambda$ ranging from infinity to zero. Forward stepwise is described in detail in \S~8.5 of \citet{weisberg1980applied}. In our setting, the intercept is not included and normalization is not applied to any columns of $\bX$. Last, recall that $T$ denotes the rank of the first noise variable, and $o_{\P}(1)$ denotes a sequence of random variables converging to zero in probability.
\begin{theorem}\label{thm:upper}
Under Assumption \ref{ass:working}, the first spurious variable selected by each of forward stepwise, the lasso, and least angle regression satisfies
\[
\log T\le (1+o_{\P}(1)) \left[ \sqrt{2n(\log p)/k} - n/(2k) + \log(n/(2p\log p))\right].
\]
\end{theorem}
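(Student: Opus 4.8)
The plan is to convert this dynamic, algorithm-dependent question into a static comparison of inner products with the original response $\by$. First I would condition on $\mathcal{F}=\sigma(\bX_1,\dots,\bX_k,\bz)$, so that $\by=\sum_{i\le k}M\bX_i+\bz$ and all signal correlations $\langle\bX_i,\by\rangle$ ($i\le k$) become fixed, while the noise columns $\bX_j$ ($j>k$) remain i.i.d.\ $\N(\bzero,I/n)$ and independent of $\mathcal{F}$. The governing intuition, which is \emph{exact} for orthogonal designs, is that in the early part of the path all three procedures select variables in decreasing order of $|\langle\bX_j,\by\rangle|$. Taking $M>0$ without loss of generality, a signal variable obeys $|\langle\bX_i,\by\rangle|\approx M+sZ_i$ and a noise variable obeys $|\langle\bX_j,\by\rangle|\approx s|W_j|$, where $Z_i,W_j$ are standard normal and $s^2=\|\by\|^2/n\le kM^2/n+\sigma^2$. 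Under this reduction $T-1$ equals the number of signal variables whose correlation exceeds the largest noise correlation $\tau:=\max_{j>k}|\langle\bX_j,\by\rangle|$, so the theorem becomes a statement about one extreme order statistic and one tail count.

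Second I would carry out the extreme-value calculation. Conditionally on $\mathcal{F}$ the noise correlations are $p-k$ i.i.d.\ centered Gaussians of variance $s^2$, so with high probability $\tau\ge s\sqrt{2\log(p-k)}\,(1-o(1))$; since a \emph{lower} bound on $\tau$ yields an \emph{upper} bound on the count, this is exactly the needed direction. A signal variable then beats $\tau$ only when $Z_i>\sqrt{2\log(p-k)}\,(1-o(1))-M/s$, and the arbitrary noise level is dispatched for free by $M/s\le\sqrt{n/k}$, valid for every $\sigma\ge0$, which makes the noiseless case the worst one. After a concentration step for the weakly dependent indicators this gives $T-1\le(1+o_{\P}(1))\,k\,\overline{\Phi}\!\big(\sqrt{2\log p}-\sqrt{n/k}\big)$, and inserting the Gaussian tail $\overline{\Phi}(x)=\frac{1}{x\sqrt{2\pi}}e^{-x^2/2}(1+o(1))$ with $x=\sqrt{2\log p}-\sqrt{n/k}$ and expanding the square produces $\log T\le(1+o_{\P}(1))\big[\sqrt{2n(\log p)/k}-n/(2k)+\log(n/(2p\log p))\big]$. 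The gap between the $\log(k/(2p\sqrt{\pi\log p}))$ that the calculation delivers and the stated $\log(n/(2p\log p))$ is only $O(\log\log p)$, which under Assumption~\ref{ass:working} is negligible against the leading term $\sqrt{2n(\log p)/k}$ and is absorbed into the factor $1+o_{\P}(1)$.

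The hard part will be justifying the order-preservation heuristic rigorously for the genuinely dynamic procedures, uniformly across the three methods. The difficulty is that once a set $A$ of variables has entered, correlations are measured against the residual $P_A^\perp\by$ (forward stepwise) or the equiangular residual (least angle regression), and the worst-case perturbation of a single correlation over an \emph{arbitrary} $A$ with $|A|\le T$ is as large as the signal itself, so a naive union bound over the $\binom{k}{T}$ subsets is hopeless. I would instead exploit that $T=k^{o(1)}\ll k$ and that every reachable active set has size at most $T\ll n/\log p$, whence the restricted isometry and restricted eigenvalue properties of the Gaussian design hold at this sparsity level. These give, \emph{uniformly} over all $|A|\le T$, estimates of the form $\|P_A^\perp\bX_i-\bX_i\|=O(\sqrt{|A|/n})$ and $\langle\bX_i,P_A\by\rangle=O(M\sqrt{|A|/n})$. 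Since $s\asymp M\sqrt{k/n}$ dominates these corrections by a factor $\sqrt{k/|A|}\to\infty$, both the signal correlations $M+\xi_i^{(t)}$ and the noise correlations drift by only a relative $O(\sqrt{T/k})=o(1)$, so the selection order, the value of $\tau$, and the position of each signal variable relative to the threshold are all stable through step $T$. I expect this uniform trajectory control to be the crux of the entire argument.

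Finally I would package the pieces as follows. At step $T-1$, assuming no spurious variable has yet entered, the stability estimate forces the active set to consist precisely of the signal variables with the largest correlations; but the count above shows only $(1+o_{\P}(1))k\,\overline{\Phi}(\sqrt{2\log p}-\sqrt{n/k})$ signal variables can exceed the (stable) threshold $\tau$, so every remaining signal variable sits below $\tau$ while some noise variable still attains $\tau$. Hence the next variable selected is spurious and $T$ satisfies the claimed bound. For the lasso I would invoke Theorem~\ref{thm:unique} to import the least angle regression analysis directly, and for forward stepwise the only extra ingredient is that the least squares refit coefficients satisfy $\widehat\beta_i=M(1+o(1))$ on a signal active set, which follows from the same restricted eigenvalue control used above.
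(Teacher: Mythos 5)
Your route is, in outline, the same one the paper follows: reduce the dynamic question to a static comparison of the inner products $\bX_j^\top\by$ (the paper conditions on $\bm\mu=\bX_S\bbeta_S$ and $\bz$ and represents the signal correlations as $D(\Gamma+\sqrt{k/n}\,\zeta_i)$, which is your $M+sZ_i$), count signal exceedances of the noise maximum by a Gaussian-tail computation (Definition \ref{def:only} and Lemma \ref{lm:IJbound}), dispose of arbitrary $\sigma$ through the worst case $\sigma=0$ (your $M/s\le\sqrt{n/k}$ is exactly the content of Lemma \ref{lm:gamma}, $\Gamma\le1+o_{\P}(1)$), and close with a uniform perturbation bound over reachable active sets plus the observation that once every unselected signal sits below the noise maximum, the next selection must be spurious (the paper's event $\A$ and its KKT contradiction). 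You also correctly isolate the uniform trajectory control as the crux; the genuine gaps are precisely there. First, the claimed bound $\langle\bX_i,P_A\by\rangle=O(M\sqrt{|A|/n})$ is false: variables enter the active set \emph{because} their correlations are atypically large, so the fitted coefficients on a signal active set are inflated --- for forward stepwise the top refit coefficient is of order $M+s\sqrt{2\log k}$, and $s\asymp M\sqrt{k/n}$ is at least of order $M$ under Assumption \ref{ass:working} --- and the correct uniform bound (Lemma \ref{lm:small_proj}) is of order $D\sqrt{k}\,|A|(\log p)/n$, larger than your claim by a factor of order $\sqrt{k|A|}(\log p)/\sqrt{n}$. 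Second, your sufficiency criterion is miscalibrated: you declare stability once corrections are $o(s)$ (``relative drift $o(1)$''), but $\log T$ is read off a Gaussian tail at argument $\approx\sqrt{2\log p}$, so shifting correlations or the threshold by $\Delta$ moves $\log T$ by roughly $\sqrt{2\log p}\,\Delta/s$; keeping this $o\bigl(\sqrt{2n(\log p)/k}\bigr)$ forces $\Delta=o(D)$ with $D=s\sqrt{n/k}$, and since $k$ may be as large as $c_5 n\log^{0.99}p$, the scale $D$ can be smaller than $s$ by a factor $\log^{0.495}p$, so $o(s)$ does not suffice. The argument survives only because the \emph{correct} perturbation bound is still $o(D)$ when $|A|\le T=p^{o(1)}$ --- a calculation your sketch does not contain but which is exactly what Lemma \ref{lm:small_proj} delivers.

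Third, forward stepwise is not the one-line add-on you describe: the premise $\widehat\beta_i=M(1+o(1))$ on a signal active set is false for the same inflation reason, and in any case forward stepwise compares decrements of residual sums of squares, which brings in the residualized column norms $\|\widetilde\bX_{(L)}\|$; the paper needs a separate uniform bound on cross-regression coefficients (Lemma \ref{lm:small_proj_stepwise}), the norm control of Lemma \ref{lm:chi_sub_expo}, and explicit $(1\pm O(\sqrt{m^3n^{-1}\log p}))$ factors to convert the greedy criterion back into the correlation comparison. Two smaller points: ranking is by \emph{absolute} correlation, so signals with $M+sZ_i<-\tau$ also precede the first noise variable (the paper's $J_c$; harmless, since it at most doubles the count), and full order preservation is both more than is needed and delicate to establish because of near-ties among order statistics --- the paper instead proves only the dichotomy that either a spurious variable appears within the first $I_c+J_c$ selections or a KKT/greedy contradiction occurs, which is the weaker statement your final paragraph actually uses.
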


If the signal magnitude $M$ is not sufficiently large compared with $\sigma$, the logarithm of $T$ would be much smaller than the upper bound appearing in the display above, meaning that the problems of the first false variables could be worse. Interestingly, this bound is sharp when $M$ is sufficiently large compared with $\sigma$, as demonstrated in the theorem below.

\begin{theorem}\label{thm:lower}
Under Assumption \ref{ass:working} and in addition provided that $\sigma/M \goto 0$, the three sequential methods obey
\[
\log T = (1+o_{\P}(1)) \left[ \sqrt{2n(\log p)/k} - n/(2k) + \log(n/(2p\log p))\right].
\]
\end{theorem}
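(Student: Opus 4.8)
Since Theorem \ref{thm:upper} already supplies the upper bound, my plan is to prove only the matching lower bound $\log T\ge(1-o_\P(1))L$ under the extra hypothesis $\sigma/M\to0$, where $L:=\sqrt{2n(\log p)/k}-n/(2k)+\log(n/(2p\log p))$; combined with Theorem \ref{thm:upper} this yields $\log T=(1+o_\P(1))L$. I would fix an arbitrary $\epsilon\in(0,1)$, set $t^\star:=\lceil\exp((1-\epsilon)L)\rceil$, and show that with probability tending to one each of the first $t^\star$ variables entered is a signal variable, whence $T>t^\star$ and $\log T\ge(1-\epsilon)L$; letting $\epsilon\downarrow0$ then finishes. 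Throughout I work with the leading-order selection rule common to the three procedures in the nearly orthogonal regime of Assumption \ref{ass:working}: at each step the variable of (essentially) largest absolute inner product with the current residual is adjoined. The equiangular correction distinguishing least angle regression and the lasso from forward stepwise perturbs the residual between selections only at lower order, and the exact coincidence of the lasso and least angle regression paths in the early stages is provided by Theorem \ref{thm:unique}, so it suffices to analyze this single rule.

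The key device is a decoupling that severs the adaptively chosen active set from the noise columns. Let $S$ be the signal support and run the procedure using only the columns $\{\bX_i:i\in S\}$, producing residuals $\bm{r}_0,\bm{r}_1,\dots$ that are measurable with respect to $(\bX_S,\bz)$ and hence independent of every noise column $\bX_j$, $j\notin S$. For each $t\le t^\star$ the residual $\bm{r}_{t-1}$ lies in $\mathrm{span}(\bX_S,\bz)$ and, because only $t^\star\ll n$ signal directions have been removed, satisfies $\|\bm{r}_{t-1}\|=(1+o_\P(1))M\sqrt{k}$ while differing from $\by$ by a vector of norm $o_\P(M\sqrt{k})$ lying in the same span (here $\sigma/M\to0$ guarantees the residual norm is governed by the signal rather than by $\bz$). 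Since that span is independent of each $\bX_j$, a single Gaussian maximal inequality over the $p$ noise columns for the near-constant direction $\by$, together with a small-norm bound on the increments, yields
\[
\max_{t\le t^\star}\ \max_{j\notin S}\ |\langle\bX_j,\bm{r}_{t-1}\rangle|\le(1+o_\P(1))\,M\sqrt{2k(\log p)/n}=:(1+o_\P(1))\,\tau;
\]
crucially the threshold is \emph{not} inflated by a factor growing with $t^\star$, because the residual direction barely moves along the path.

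It remains to produce, on the signal side, enough variables whose inner product with $\bm{r}_{t-1}$ stays above $\tau$ for all $t\le t^\star$. At $t=0$ one has $\langle\bX_i,\by\rangle=M(1+\xi_i)$ for $i\in S$, with fluctuation $\xi_i=\sum_{i'\in S\setminus\{i\}}\langle\bX_i,\bX_{i'}\rangle$ that is, to leading order, $\N(0,k/n)$ (the term $\langle\bX_i,\bz\rangle/M$ being negligible as $\sigma/M\to0$). A Gaussian tail estimate gives
\[
\#\{i\in S:\,\langle\bX_i,\by\rangle>\tau\}=(1+o(1))\,k\,\pr\!\Big(\N(0,1)>\sqrt{2\log p}-\sqrt{n/k}\Big)=\exp\big((1+o(1))L\big),
\]
which is exactly where the exponent $\sqrt{2n(\log p)/k}-n/(2k)+\log(n/(2p\log p))$ emerges; a second-moment (or Chen--Stein Poisson) argument upgrades this to a high-probability lower bound on the count, comfortably exceeding $t^\star=\exp((1-\epsilon)L)$. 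Propagating from $t=0$ to all $t\le t^\star$ requires controlling how projecting out the already-selected signal columns perturbs these inner products; because $t^\star=p^{o(1)}\ll n$, the perturbations are $o_\P(\tau)$, so a slightly strengthened threshold leaves at least $t^\star$ signal variables persistently above $\tau$. At each step $t\le t^\star$ the largest unselected signal inner product then beats the largest noise inner product, the greedy rule adjoins a signal variable, and by induction the genuine path coincides with the signal-only path through step $t^\star$, giving $T>t^\star$.

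The main obstacle is the quantitative matching underpinning the last two paragraphs. On one hand the margin $\delta_p$ by which the retained signal variables must exceed $\tau$ has to be taken vanishing, for the count is exponentially sensitive to the threshold through the Gaussian tail and any constant-factor inflation would shrink it by a factor $p^{-c}$ and destroy the bound. On the other hand $\delta_p$ must dominate the relative fluctuation of the noise maximum along the path, of order $\sqrt{t^\star(\log p)/n}$, as well as the lower-order $\log\log p/\log p$ term from the Gaussian maxima. I would therefore fit $\delta_p$ into the window $\sqrt{t^\star(\log p)/n}\ll\delta_p\ll\epsilon L/\log p$ and verify it is nonempty uniformly over the entire range $c_4 n\le k\le c_5 n\log^{0.99}p$, a regime in which $L$ can be as small as order $(\log p)^{0.005}$ while still dominating $\log\log p$. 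Securing this window, together with the sharp control of the order statistics of the signal inner products along the slowly evolving residual, is the delicate step, and it is exactly here that the full strength of Assumption \ref{ass:working} and the hypothesis $\sigma/M\to0$ is used.
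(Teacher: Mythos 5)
Your proposal is correct in outline and follows essentially the same route as the paper's proof: restrict the procedure to the true support $\bX_S$ so that the residual is independent of the noise columns, bound the maximal noise inner product by $(1+o_\P(1))D\sqrt{2k\log(p-k)/n}$, count the signal inner products exceeding that threshold via the Gaussian tail (which is exactly where the exponent $L$ arises, the paper's $I_{-c}(\Gamma)$ with $\Gamma=1+o_\P(1)$ playing the role of your $t^\star$), and verify through the selection dynamics that these signals all enter before any noise column, then send the margin to zero and invoke Theorem \ref{thm:upper} for the matching upper bound. The only differences are implementational—you propose a second-moment/Chen--Stein count where the paper uses order statistics (Lemma \ref{lm:normal_log}) and the deterministic bookkeeping of Lemma \ref{lm:IJbound}, and you treat all three procedures via a single leading-order greedy rule where the paper runs separate KKT-based (lasso/LARS, Lemmas \ref{lm:dicho} and \ref{lm:small_proj}) and residual-sum-of-squares (forward stepwise, Lemma \ref{lm:dicho_fs}) arguments—so these amount to the same proof strategy.
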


Provided in the Appendix, the proofs of both theorems involve some techniques that are likely to extend beyond the three sequential methods. The condition concerning the ratio between $\sigma$ and $M$ can be relaxed to $|M|/\sigma \gg \sqrt{n/k}$. Setting $k \approx n \log^{0.99} p$ as in Assumption \ref{ass:working}, for example, Theorem \ref{thm:lower} follows if $M/\sigma$ is bounded away from $0$. An immediate consequence of this theorem is as follows.

\begin{corollary}\label{corr:noiseless}
Under Assumption \ref{ass:working}, each of the three methods in the noiseless case ($\sigma = 0$) obeys
\[
\log T = (1+o_{\P}(1)) \left[ \sqrt{2n(\log p)/k} - n/(2k) + \log(n/(2p\log p))\right].
\]
\end{corollary}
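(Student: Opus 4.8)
The plan is to read this corollary as nothing more than the specialization $\sigma = 0$ of Theorem \ref{thm:lower}, so the task reduces to verifying that this specialization is legitimate. The upper-bound half is immediate: Theorem \ref{thm:upper} is proved under Assumption \ref{ass:working} alone, and that assumption explicitly allows $\sigma \ge 0$, so the inequality
\[
\log T \le (1+o_{\P}(1))\left[\sqrt{2n(\log p)/k} - n/(2k) + \log(n/(2p\log p))\right]
\]
continues to hold verbatim at $\sigma = 0$. Hence the only direction that needs attention is the matching lower bound, which is precisely where Theorem \ref{thm:lower} spends its one extra hypothesis.

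Next I would check that this extra hypothesis is satisfied. Theorem \ref{thm:lower} requires $\sigma/M \to 0$, relaxed to $|M|/\sigma \gg \sqrt{n/k}$ in the remark following its statement. Since $M \ne 0$ by Assumption \ref{ass:working}, setting $\sigma = 0$ gives $\sigma/M = 0$ for every index $l$, so the ratio tends to zero trivially; equivalently $|M|/\sigma = +\infty$, which dominates $\sqrt{n/k}$. The hypotheses of Theorem \ref{thm:lower} are therefore in force, its conclusion supplies the lower bound, and together with the upper bound this is exactly the two-sided estimate claimed in the corollary.

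The one point deserving care, which I expect to be the main obstacle, is whether the written proof of Theorem \ref{thm:lower} literally permits $\sigma = 0$ or instead tacitly assumes $\sigma > 0$ (for instance by normalizing to the standardized noise $\bz/\sigma$). The lower-bound argument controls, at each step, the correlation between a candidate variable and the current residual, decomposing that residual into a signal part of order $M$ and a noise part of order $\sigma$; the role of the hypothesis $\sigma/M \to 0$ is only to render the noise part negligible against the signal part. When $\sigma = 0$ the noise part is identically $\bzero$, so each such bound either holds unchanged or strictly simplifies, and the argument goes through. Should some step genuinely require nondegenerate noise, I would fall back on an approximation argument: fix the design $\bX$, replace $\bz = \bzero$ by $\sigma_l\bz'$ for a standard Gaussian $\bz'$ and a sequence $\sigma_l \downarrow 0$, apply Theorem \ref{thm:lower} to the perturbed sequence, and transfer the bound to the noiseless sequence using stability of the solution paths of the three procedures under small perturbations of $\by$. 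Almost surely over the Gaussian design the noiseless path has strictly separated successive correlations, so for $\sigma_l$ shrinking fast enough the rank $T$ of the perturbed problem coincides with that of the noiseless problem with high probability; making this coincidence hold uniformly along the sequence is the delicate part of that fallback route.
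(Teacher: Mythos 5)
Your proposal is correct and takes exactly the paper's route: the paper states Corollary \ref{corr:noiseless} as an immediate consequence of Theorem \ref{thm:lower}, since Assumption \ref{ass:working} explicitly permits $\sigma = 0$ and, because $M \ne 0$, the extra hypothesis $\sigma/M \to 0$ holds trivially in the noiseless case. Your added check that the proof of Theorem \ref{thm:lower} does not tacitly require $\sigma > 0$ (the noise contributions, e.g.\ in Lemma \ref{lm:gamma_1}, vanish identically rather than cause trouble) is diligence the paper leaves implicit, and the approximation fallback is unnecessary.
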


In addition to predicting the phenomenon observed in Fig.~\ref{fig:intro}, Theorem \ref{thm:lower} together with Corollary \ref{corr:noiseless} demonstrates that having an even stronger signal magnitude does not affect $T$ much as long as it exceeds a certain level. 

The theorems presented here differ from results that are found extensively in the literature claiming a high probability of selecting the exactly correct model, mainly due to assuming different sparsity regimes of the regression coefficients $\bbeta$. Explicitly, the former assumes $c_4 n \le k \le \min\{0.99 p, c_5 n \log^{0.99} p\}$ whereas the latter often, if not always, assumes a restrictive sparsity regime such as $k = O(n/\log p)$ or $k \ll n/\log p$. In fact, under Assumption \ref{ass:working}, it is unrealistic to expect perfect model selection using sequential methods: below a simple corollary of Theorem \ref{thm:upper} shows the number of signal variables before the first false variable only accounts for an insignificant fraction of the total number of signal variables.

\begin{corollary}\label{corr:vanish}
Under Assumption \ref{ass:working}, each of the three methods satisfies
\[
\frac{T}{k} \longrightarrow 0 \text{ in probability}.
\]
\end{corollary}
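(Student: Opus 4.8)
The plan is to deduce the corollary directly from the upper bound in Theorem \ref{thm:upper}, since showing $T/k \to 0$ in probability amounts to showing $\log T - \log k \to -\infty$ in probability: as $T \ge 1$ and $k \to \infty$, the event $\{T/k > \epsilon\}$ coincides with $\{\log T - \log k > \log \epsilon\}$ for any fixed $\epsilon > 0$. Write $B$ for the deterministic bracket
\[
B = \sqrt{2n(\log p)/k} - n/(2k) + \log(n/(2p\log p)),
\]
so that Theorem \ref{thm:upper} reads $\log T \le (1 + o_{\P}(1)) B$. I would first analyze the purely deterministic quantity $B - \log k$, and then argue that the multiplicative $o_{\P}(1)$ factor cannot spoil the divergence.

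For the deterministic part I would use the algebraic identity
\[
B - \log k = -\left[\sqrt{\log p} - \sqrt{n/(2k)}\right]^2 + \log(n/(2k\log p)).
\]
Under Assumption \ref{ass:working} the ratio $n/k$ lies in $[1/(c_5\log^{0.99}p),\, 1/c_4]$, so $\sqrt{n/(2k)}$ stays bounded while $\sqrt{\log p} \to \infty$. Consequently the squared bracket equals $\log p - O(\sqrt{\log p})$ and the logarithmic term is $-\Theta(\log\log p)$, whence $B - \log k = -\log p + O(\sqrt{\log p}) \to -\infty$, with the divergence driven at rate $\log p$. The same bounds control each summand of $B$ separately and give $|B| \le C\sqrt{\log p}$ for some constant $C$.

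To handle the random factor, write the $o_{\P}(1)$ term as $\xi_l$ and fix $\delta > 0$. On the event $\{|\xi_l| \le \delta\}$, whose probability tends to one, the inequality $\log T \le B + \xi_l B$ gives
\[
\log T - \log k \le (B - \log k) + |\xi_l|\,|B| \le -\log p + O(\sqrt{\log p}) + \delta C\sqrt{\log p},
\]
and the right-hand side tends to $-\infty$ as $l \to \infty$. Thus for every real $t$, the probability $\pr(\log T - \log k > t)$ is bounded by $\pr(|\xi_l| > \delta)$ plus an indicator that vanishes for large $l$, so $\log T - \log k \to -\infty$ in probability and the corollary follows.

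The main obstacle, and really the only nonroutine point, is controlling the product of the $o_{\P}(1)$ factor with the possibly growing $B$: one must verify that $|B|$ grows no faster than $\sqrt{\log p}$, which is negligible against the $\log p$ rate at which $B - \log k$ diverges to $-\infty$. Everything else is bookkeeping with the constants of Assumption \ref{ass:working}.
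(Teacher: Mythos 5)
Your proof is correct and follows essentially the same route as the paper, which treats this as an immediate consequence of Theorem \ref{thm:upper} (the paper gives no separate appendix proof, only the completing-the-square identity in the introduction and a linear-sparsity sanity check after the corollary). Your explicit handling of the multiplicative $o_{\P}(1)$ factor against the $O(\sqrt{\log p})$ growth of the bracket is exactly the bookkeeping the paper leaves implicit, and it is done correctly.
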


To better appreciate Corollary \ref{corr:vanish}, consider the scenario where $k/p \goto \epsilon$ and $n/p \goto \delta$ for some positive constants $\epsilon < 1$ and $\delta$. Theorem \ref{thm:upper} shows that, up to a vanishing fraction, the logarithm of $T$ is no larger than $\sqrt{2\delta(\log p)/\epsilon} - \delta/(2\epsilon) + \log(\delta/(2\log p)) = (1+o(1))\sqrt{2\delta(\log p)/\epsilon}$. This expression for approximating $\log T$ yields $T \le \exp((1+o(1))\sqrt{2\delta(\log p)/\epsilon}) \ll \epsilon p = k$, confirming Corollary \ref{corr:vanish} in this linear sparsity regime. We summarize the finding in the corollary below.

\begin{corollary}
Under Assumption \ref{ass:working} and additionally provided that $k/p \goto \epsilon$ and $n/p \goto \delta$ for arbitrary positive constants $\epsilon < 1$ and $\delta$, each of the three methods satisfies
\[
T \le \e^{(1+o_{\P}(1))\sqrt{2\delta(\log p)/\epsilon}}.
\]
\end{corollary}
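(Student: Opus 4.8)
The plan is to derive this directly from Theorem~\ref{thm:upper} by specializing its right-hand side to the linear sparsity regime $k/p \to \epsilon$ and $n/p \to \delta$. First I would check that this regime is admissible under Assumption~\ref{ass:working}: because $0 < \delta$ and $0 < \epsilon < 1$ are fixed constants, the requirements $c_1 p/\log^{c_2} p \le n \le c_3 p$ and $c_4 n \le k \le \min\{0.99p, c_5 n \log^{0.99} p\}$ all hold for suitable constants once $p$ is large (note $n\log^{0.99}p/p \to \infty$, so the upper constraint on $k$ is slack). Hence Theorem~\ref{thm:upper} applies and supplies
\[
\log T \le (1+o_{\P}(1))\left[\sqrt{2n(\log p)/k} - n/(2k) + \log(n/(2p\log p))\right].
\]

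Next I would analyze the three terms in the bracket separately, tracking their orders as $p \to \infty$. Writing $n/k = (n/p)/(k/p) = (\delta/\epsilon)(1+o(1))$, the leading term becomes $\sqrt{2n(\log p)/k} = (1+o(1))\sqrt{2\delta(\log p)/\epsilon} = \Theta(\sqrt{\log p})$. The middle term obeys $n/(2k) \to \delta/(2\epsilon) = O(1)$, and the last term satisfies $\log(n/(2p\log p)) = \log\big((n/p)/(2\log p)\big) = -(1+o(1))\log\log p$. Since $1 = o(\sqrt{\log p})$ and $\log\log p = o(\sqrt{\log p})$, both the middle and the last terms are negligible relative to the leading one, so the whole bracket equals $(1+o(1))\sqrt{2\delta(\log p)/\epsilon}$.

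Finally I would fold this deterministic $(1+o(1))$ factor into the stochastic prefactor. Because the product of a $(1+o(1))$ and a $(1+o_{\P}(1))$ factor is again of the form $1+o_{\P}(1)$, one gets $\log T \le (1+o_{\P}(1))\sqrt{2\delta(\log p)/\epsilon}$, and exponentiating both sides yields the claimed bound $T \le \e^{(1+o_{\P}(1))\sqrt{2\delta(\log p)/\epsilon}}$.

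Since this is a routine specialization of Theorem~\ref{thm:upper}, there is no genuine obstacle. The only point demanding care is the bookkeeping of lower-order terms, namely verifying that both the constant contribution and the $\log\log p$ contribution are indeed $o(\sqrt{\log p})$, and confirming that combining the deterministic $o(1)$ arising from $n/k \to \delta/\epsilon$ with the stochastic $o_{\P}(1)$ from the theorem preserves the $(1+o_{\P}(1))$ form.
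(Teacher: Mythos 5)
Your proposal is correct and follows essentially the same route as the paper: specialize Theorem~\ref{thm:upper} to the regime $n/k \to \delta/\epsilon$, note that the constant term $\delta/(2\epsilon)$ and the $\log(\delta/(2\log p)) = -(1+o(1))\log\log p$ term are both $o(\sqrt{\log p})$, so the bracket equals $(1+o(1))\sqrt{2\delta(\log p)/\epsilon}$, and exponentiate. The only difference is cosmetic: the paper absorbs the bookkeeping into a single displayed identity, whereas you track the three terms separately and also verify the regime's consistency with Assumption~\ref{ass:working}.
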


This regime of linear sparsity is previously employed in \cite{su2015false}, which studies limitations of the lasso for the false discovery rate control. The techniques developed there are not applicable to  studying the first noise variable, which is a much finer problem.

\subsection{Equivalence between lasso and least angle regression}
\label{sec:equiv-betw-lasso}

In contrast to the other two methods, the lasso would drop a selected variable if its coefficient hits zero. This irregularity of the lasso path might lead to ambiguity in interpreting the rank $T$ in Theorems \ref{thm:upper} and \ref{thm:lower}. Fortunately, as a byproduct of the above, the theorem below rules out the possibility of such ambiguity.

\begin{theorem}\label{thm:unique}
Assume $\bX$ has independent $\N(0, 1/n)$ entries. Then, with probability at least $1 - p^{-2}$, no drop-out occurs before the first
\[
\min\left\{ \left\lceil c\sqrt{n/\log p} \right\rceil, p \right\}
\]
variables along the lasso path are selected, where $\lceil x \rceil$ denotes the least integer greater than or equal to $x$ and $c > 0$ is some universal constant.
\end{theorem}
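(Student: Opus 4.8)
The plan is to reduce the no-drop-out claim to a purely deterministic sign condition on the active design and then verify that condition uniformly over all small subsets of columns with high probability. Recall that the lasso and least angle regression paths coincide exactly as long as no active coefficient changes sign, and a drop-out is precisely the event that some active coefficient returns to zero. On any linear segment of the path with active set $S$ and signs $\bs_S = \operatorname{sign}(\lasso_S)$, the Karush--Kuhn--Tucker conditions give $\lasso_S(\lambda) = (\bX_S^\top\bX_S)^{-1}(\bX_S^\top\by - \lambda\bs_S)$, so as $\lambda$ decreases the active coefficients move in the direction $(\bX_S^\top\bX_S)^{-1}\bs_S$. Since each coefficient is affine in the step length, no coordinate can return to zero on the segment as long as every coordinate of that direction agrees in sign with $\bs_S$, i.e. $(\bs_S)_j\,[(\bX_S^\top\bX_S)^{-1}\bs_S]_j > 0$ for every $j\in S$. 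It therefore suffices to show that, with probability at least $1-p^{-2}$, this sign condition holds simultaneously for every subset $S$ with $|S|\le m := \min\{\lceil c\sqrt{n/\log p}\,\rceil, p\}$ and every pattern $\bs_S\in\{\pm1\}^{|S|}$, because the active sets visited before the $m$-th selection are nested subsets of this form.

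Next I would remove the sign pattern. Because the columns of $\bX$ are independent sign-symmetric Gaussian vectors, the sign-adjusted matrix $\bX_S\operatorname{diag}(\bs_S)$ has the same law as an $n\times|S|$ matrix $\bm{A}$ with independent $\N(0,1/n)$ entries, and a short computation shows that the sign condition for $(S,\bs_S)$ is equivalent to all coordinates of $(\bm{A}^\top\bm{A})^{-1}\bm{1}$ being positive. The problem thus reduces to bounding $\pr[(\bm{A}^\top\bm{A})^{-1}\bm{1}\ \text{has a nonpositive coordinate}]$ for $s:=|S|\le m$, and making it small enough to survive a union bound over the $\binom{p}{\le m}2^{m}$ choices of $(S,\bs_S)$.

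To bound the per-subset probability I would write $\bm{A}^\top\bm{A} = \bm{I} + \bm{E}$ and expand $(\bm{A}^\top\bm{A})^{-1}\bm{1} = \bm{1} - \bm{E}\bm{1} + \sum_{t\ge 2}(-\bm{E})^{t}\bm{1}$ as a Neumann series. The $j$-th coordinate is then $1 - (\bm{E}\bm{1})_j$ plus a remainder bounded in $\ell_\infty$ by $\|\bm{E}\|_{\mathrm{op}}^{2}(1-\|\bm{E}\|_{\mathrm{op}})^{-1}\sqrt{s}$. Two estimates control this: a random-matrix bound giving $\|\bm{E}\|_{\mathrm{op}}\le C\sqrt{s/n}$, so that the remainder is of order $s^{3/2}/n = o(1)$; and a sharp coordinate tail bound for $(\bm{E}\bm{1})_j = \sum_{k\ne j}\bm{A}_j^\top\bm{A}_k + (\|\bm{A}_j\|^2-1)$. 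Conditioning on $\bm{A}_j$, the first sum is Gaussian with variance of order $s/n$, which yields $\pr[(\bm{E}\bm{1})_j > \tfrac12]\le\exp(-\Omega(n/s))$ once $\|\bm{A}_j\|^2$ is pinned near $1$ by chi-square concentration. Hence each coordinate of $(\bm{A}^\top\bm{A})^{-1}\bm{1}$ equals $1 - o(1) > 0$ outside an event of probability $\exp(-\Omega(n/m))$.

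The final step is to balance the union bound. Since $\binom{p}{\le m}2^{m}\le \exp(O(m\log p))$, the total failure probability is at most $\exp\big(O(m\log p) - \Omega(n/m)\big)$. Choosing $m$ of order $\sqrt{n/\log p}$ equalizes the two exponents up to constants, so for a small enough universal $c$ the exponent is negative and of order $-\sqrt{n\log p}$, comfortably below $-2\log p$; the degenerate cases (very small $n$, or $m=p$) are immediate since a single active coefficient moves monotonically away from zero. I expect the main obstacle to be exactly this tension: the union runs over roughly $p^{m}$ subsets, so a naive operator-norm argument (which only needs $m\ll n$) is far too weak, and the threshold $\sqrt{n/\log p}$ emerges only because the coordinate tail bound on $(\bm{E}\bm{1})_j$ is sharp enough---through the conditional-Gaussian structure---to beat the entropy $m\log p$ against the tail exponent $n/m$. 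Making this tail uniform over all subsets, in tandem with the operator-norm control of the higher-order Neumann terms, is the technical heart of the argument.
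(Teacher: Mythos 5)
Your proposal is correct, but its technical core is genuinely different from the paper's. Both proofs start from the same reduction: on a segment of the lasso path with active set $\wS$ and signs $\sgn(\lasso_{\wS})$, the solution moves along $(\bX_{\wS}^\top\bX_{\wS})^{-1}\sgn(\lasso_{\wS})$ as $\lambda$ decreases, so a drop-out requires this direction to disagree in sign with $\sgn(\lasso_{\wS})$ in some coordinate. The paper then argues by contradiction at the \emph{first} drop-out: a sign disagreement forces $\|(\bX_{\wS}^\top\bX_{\wS})^{-1}\sgn(\lasso_{\wS}) - \sgn(\lasso_{\wS})\|_2 \ge 1$, whereas the restricted isometry constant $\theta$ over $(m-1)$-sparse supports gives the deterministic bound $\theta(1-\theta)^{-1}\sqrt{m-1}$ for the same quantity; a single citation of the standard estimate $\theta \le C\sqrt{(m-1)\log(p/(m-1))/n}$, valid with probability $1-p^{-2}$, yields the contradiction once $c$ is small. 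The factor $\sqrt{m-1} = \|\sgn(\lasso_{\wS})\|_2$ silently absorbs all $2^{|\wS|}$ sign patterns, so the paper needs no union bound over signs, no Neumann expansion, and no coordinatewise tails. You instead verify the sign condition uniformly over all pairs $(S,\bm{s})$, using Gaussian sign symmetry to reduce each pattern to the all-ones vector, a Neumann expansion to isolate the linear term $\bm{E}\bm{1}$, and coordinate tails $\e^{-\Omega(n/s)}$ balanced against the entropy $\e^{O(m\log p)}$; this is sound, since before the $m$th selection the path can only realize active sets of size at most $m-1$, so the uniform statement suffices. Two caveats. First, your per-subset bound $\|\bm{E}\|_{\mathrm{op}} \le C\sqrt{s/n}$ fails with probability only $\e^{-\Omega(s)}$ and hence does not survive the union over ${p \choose s}$ subsets; as your closing paragraph hints, it must be replaced by the uniform, RIP-type bound $\|\bm{E}\|_{\mathrm{op}} \lesssim \sqrt{m\log(p/m)/n}$, so your route does not actually dispense with restricted isometry --- it layers coordinatewise refinements on top of it. Second, those refinements buy no quantitative gain: your balance $m\log p \asymp n/m$ and the paper's condition $\theta\sqrt{m}\lesssim 1$ land at exactly the same threshold $m \asymp \sqrt{n/\log p}$. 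The trade-off is that the paper's proof is far shorter and transfers verbatim to any design with a small restricted isometry constant (a point the paper exploits to cover Bernoulli designs), while yours is more self-contained and makes the origin of the $\sqrt{n/\log p}$ scale transparent. One minor slip: the case $m = p$ is not ``immediate'' from monotonicity of a single coefficient; it is simply covered by the same union-bound argument, since every active set encountered then has size at most $p \le \lceil c\sqrt{n/\log p}\rceil$.
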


Note that Theorem \ref{thm:unique} only requires the normality of $\bX$, as opposed to additional conditions imposed on $\bz, \bbeta$, and $(k, n, p)$ in Assumption \ref{ass:working}. As seen from its proof in the Appendix, the validity of the theorem depends on the design matrix $\bX$ basically only through its restricted isometry property. Thus, this result can seamlessly carry over to other matrix ensembles with an appropriate restricted isometry property constant, such as Bernoulli random matrices \citep{CanTaoDecode05}.

Under Assumption \ref{ass:working}, $\log(\min\{\lceil c\sqrt{n/\log p}\rceil, p\}) = \log \lceil c\sqrt{n/\log p}\rceil \gg \sqrt{2n(\log p)/k} - n/(2k)$. Consequently, Theorem \ref{thm:unique} together with Theorem \ref{thm:upper} ensures that the first noise variable selected by the lasso is not preceded by any drop-out with probability approaching one.

This byproduct provides new insights into the lasso path and is a contribution of independent interest to high-dimensional statistics. The lasso is known to coincide exactly with least angle regression until the first time the lasso drops a selected variable \citep{efron2004least,tibshirani2011solution}. To our knowledge, however, the question of where along the path the lasso and least angle regression differ has not been addressed in prior research, perhaps due to technical difficulties. By confirming the equivalence between these two procedures, Theorem \ref{thm:unique} allows us to carry over well-known results on the lasso for model selection to least angle regression.

\subsection{Heuristics and insights}
\label{sec:heur-expl}

In this section we give an informal derivation of Theorems \ref{thm:upper} and \ref{thm:lower}. Although our discussion below lacks rigor, nevertheless, the goal is to gain insights into this counterintuitive phenomenon. For the full proofs, see the Appendix.

We focus on the noiseless case $\bz = \bzero$ in Assumption \ref{ass:working}, which is presumably the most ideal scenario for model selection. Denote by $S = \{j: \beta_j \ne 0\}$ the support of the signals and by $\widehat\bbeta$ an estimate given by any of the three sequential methods somewhere along the solution path. Write $j_1 \notin S$ for the index off the support having the largest inner product in magnitude with the residual $\by - \bX\widehat\bbeta = \bX(\bbeta - \widehat\bbeta)$, and $j_2 \in S$ for the index on the support having the $(T-1)$th largest inner product in magnitude with the residual. By using some technical arguments found in the Appendix, we get
\begin{equation}\label{eq:corr}
\bX_{j_1}^\top \bX(\bbeta - \widehat\bbeta) \approx M \sqrt{\frac{2k\log (p-k)}{n}}, \quad \bX_{j_2}^\top \bX(\bbeta - \widehat\bbeta) \approx M + M \sqrt{\frac{2k\log (k/T)}{n}}.
\end{equation}
Above and henceforth, $\bX^\top$ denotes the transpose of
$\bX$. Recognizing that the sequential methods rank variables essentially according to the correlations with the residual, where in our case correlations are roughly equivalent to inner products since the columns of $\bX$ are approximately normalized, from \eqref{eq:corr} we must have
\[
M\sqrt{\frac{2k \log(p-k)}{n}} \approx M + M\sqrt{\frac{2 k \log(k/T)}{n}}
\]
at the point where the first false variable is just about to enter the model. In the linear sparsity regime $k/p \goto \epsilon, n/p \goto \delta$, this yields 
\[
\log T \approx \sqrt{\frac{2\delta \log p}{\epsilon}}.
\] 

The exposition above suggests that an early spurious variable is mainly due to a large inner product $\bX_{j_1}^\top \bX(\bbeta - \widehat\bbeta)$, which would not be the case if $\widehat\bbeta$ was a low-bias estimator of $\bbeta$. However, until a significant proportion of the variables have been selected, a solution $\widehat\bbeta$ provided by a sequential method is overwhelmingly biased. Another way to
formalize this point is that the residual $\bX(\bbeta - \widehat\bbeta)$ still contains a significant amount of true effects, largely contributed by presently unselected variables. This bias acts as if it were noise and, as a consequence, some irrelevant variables happen to correlate highly with the residual vector, leading to false variables selected early. This is not a matter of the signal-to-noise ratio; an increasing signal magnitude would enlarge the bias as well and, hence, noise variables always occur early. Other examples of \textit{pseudo} noise caused by bias have been observed in previous work
\citep{bayati2012lasso}. To be complete, we remark that this phenomenon does not appear in regimes of extreme sparsity (see, for example, \citet{wainwright2009sharp}).


\section{Illustrations}
\label{sec:examples}

\subsection{Numerical examples}
\label{sec:simulations}
We present simulation experiments to illustrate the first false variable of the three sequential methods, along with the predictions given by Theorems \ref{thm:upper} and \ref{thm:lower}. Specifically, we numerically examine three studies concerning the effect of design matrix shapes, signal magnitudes, and correlations between the columns of $\bX$ on the first spurious variable. Two scenarios are experimented for each study.

\par
\medskip
{\it Study} 1. In the first experiment (square design) the design $\bX$ of size $1000 \times 1000$ has independent $\N(0, 1/1000)$ entries, the signals $\beta_j = 100$ for $j \le k$ and $\beta_j = 0$ for $j \ge k+1$, and each noise component $z_i$ follows $\N(0, 1)$ independently. In the second experiment (fat design) the design $\bX$ is changed to be size of $800 \times 1200$ and has independent Bernoulli entries, which take value $1/\sqrt{500}$ with probability half and otherwise $-1/\sqrt{500}$, while all the other assumptions remain the same. Results of the two experiments are shown in Figure \ref{fig:study}(a) and (b), respectively.
\medskip

{\it Study} 2. In both experiments, the $500 \times 1000$ design matrix $\bX$ consists of independent $N(0, 1/500)$ entries and each $z_i$ is independently distributed as $\N(0, 1)$. For the first experiment (one mixture), we set $\beta_j = M$ for $j = 1, \ldots, 80$ and $\beta_j = 0$ for $j = 81, \ldots, 1000$. For the second one (two mixtures), we set $\beta_j = M$ for $j = 1, \ldots, 40, \beta_j = M^2/(10\sqrt{2\log p})$ for $j = 41, \ldots, 80$ and $\beta_j = 0$ for $j = 81, \ldots, 1000$. The parameter $M$ is varied from $0.2\sqrt{2\log p}$ to $10\sqrt{2\log p}$. Note that the two mixtures take the same value when $M = 10\sqrt{2\log p} = 37.17$. Results are shown in Fig.~\ref{fig:study}(c) and (d).
\par
\medskip

{\it Study} 3. This scenario uses $\bbeta$ obeying $\beta_j = 100\sqrt{2\log p}$ for $j \le 80$ and $\beta_j = 0$ otherwise. The noise $\bz$ consists of independent standard normals. The $500 \times 1000$ design matrix $\bX$ has each row independently drawn from $\N(\bzero, \bm\Sigma)$. For the $1000 \times 1000$ covariance matrix $\bm\Sigma$, the first experiment (equi correlation) assumes $\Sigma_{ij} = \rho/n$ if $i \ne j$ and $\Sigma_{jj} = 1/n$. In the second one (decaying correlation), $\Sigma_{ij} = \rho^{|i-j|}/n$. Results are shown in Fig.~\ref{fig:study}(e) and (f).
\par
\medskip

Both the lasso and least angle regression closely match our predictions. Notably, the two procedures yield exactly the same ranks of the first noise variables, hence supporting Theorem \ref{thm:unique}. On the other hand, forward stepwise exhibits larger departures from the theoretical predictions, mainly due to the slow convergence to the asymptotics, while as well showing a decreasing rank once the sparsity exceeds a cutoff.  

As shown in Fig.~\ref{fig:study}(a) and (b), the first false variable occurs earlier as $n$ decreases while $p$ gets larger. In particular, the behaviors of the methods under Bernoulli random designs as in Fig.~\ref{fig:study}(b) closely resemble that under Gaussian random designs. In Fig.~\ref{fig:study}(c), the rank of the first false variable increases as the signal magnitude $M$ is amplified. While this increasing rank is expected, Fig.~\ref{fig:study}(d) in contrast illustrates a rather surprising phenomenon: the rank drops after $M$ exceeds a certain level. More precisely, given $M \ge 3.4\sqrt{2\log p} = 12.64$, the lasso selects the first false variable earlier and earlier even though the sparsity is fixed and each signal gets strengthened, and the phenomenon is
even more transparent for forward stepwise. Intuitively, this is because the \textit{effective} sparsity in the case of a moderately large $M$ is smaller than the nominal sparsity $80$. To see this, observe that the ratio of the signals of the first 40 components and the next 40 components is $M/(M^2/37.17) = 37.17/M$, which is noticeably larger than 1. Put another way, the first 40 components act as the main signals and, hence loosely speaking, the effective sparsity is smaller than $80$. In the presence of significant correlations between columns of $\bX$, Fig.~\ref{fig:study}(e) and (f) clearly show that the problem of early false variables is further exacerbated.

\begin{figure}[ht!]
\centering
\hspace{0.01\linewidth} {\scriptsize (a)} \hspace{0.35\linewidth} {\scriptsize (b)} \hspace{0.48\linewidth}\\[-1.5em]
\includegraphics[width=0.4\textwidth, height=2.2in]{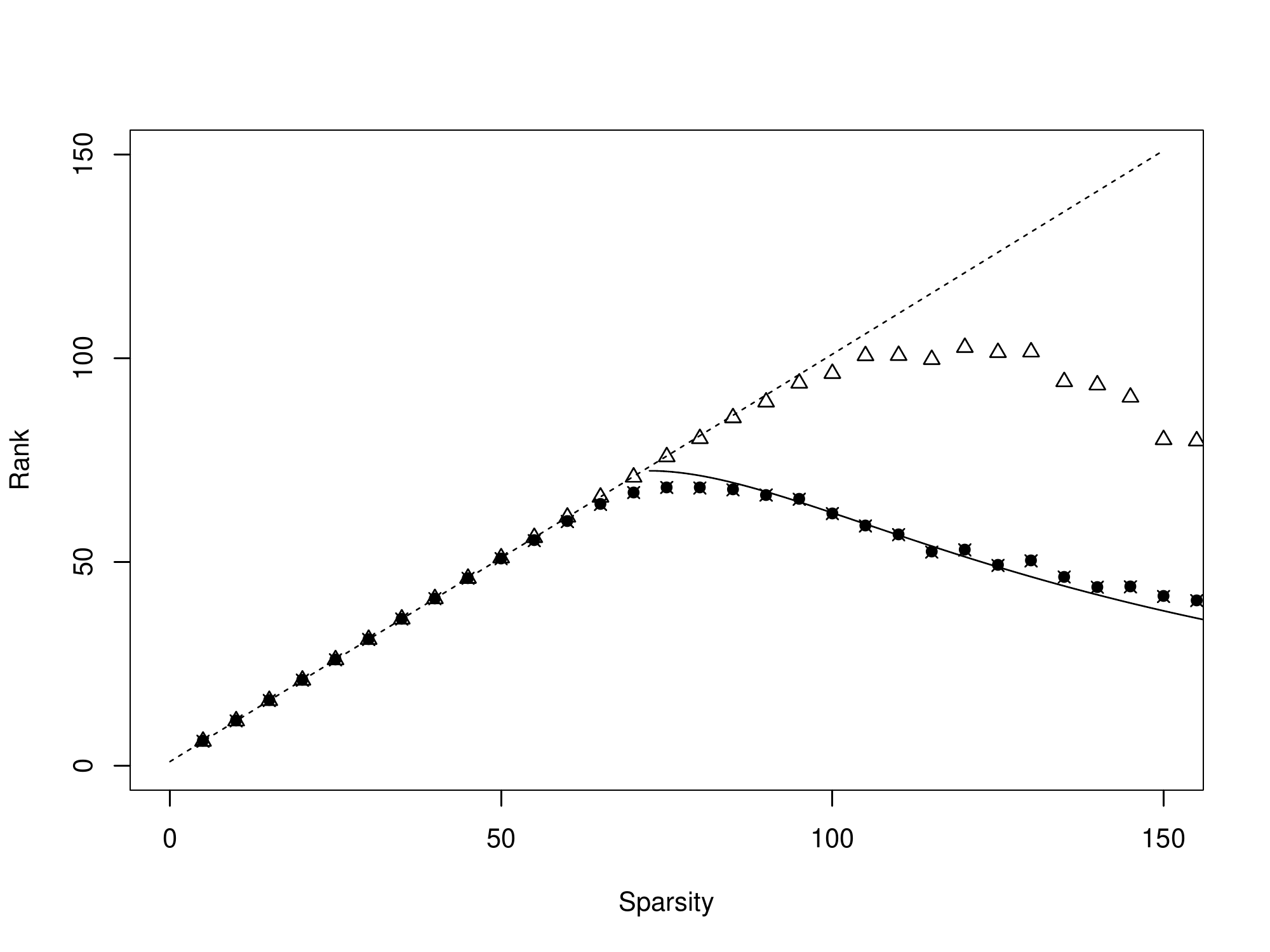}    
\includegraphics[width=0.4\textwidth, height=2.2in]{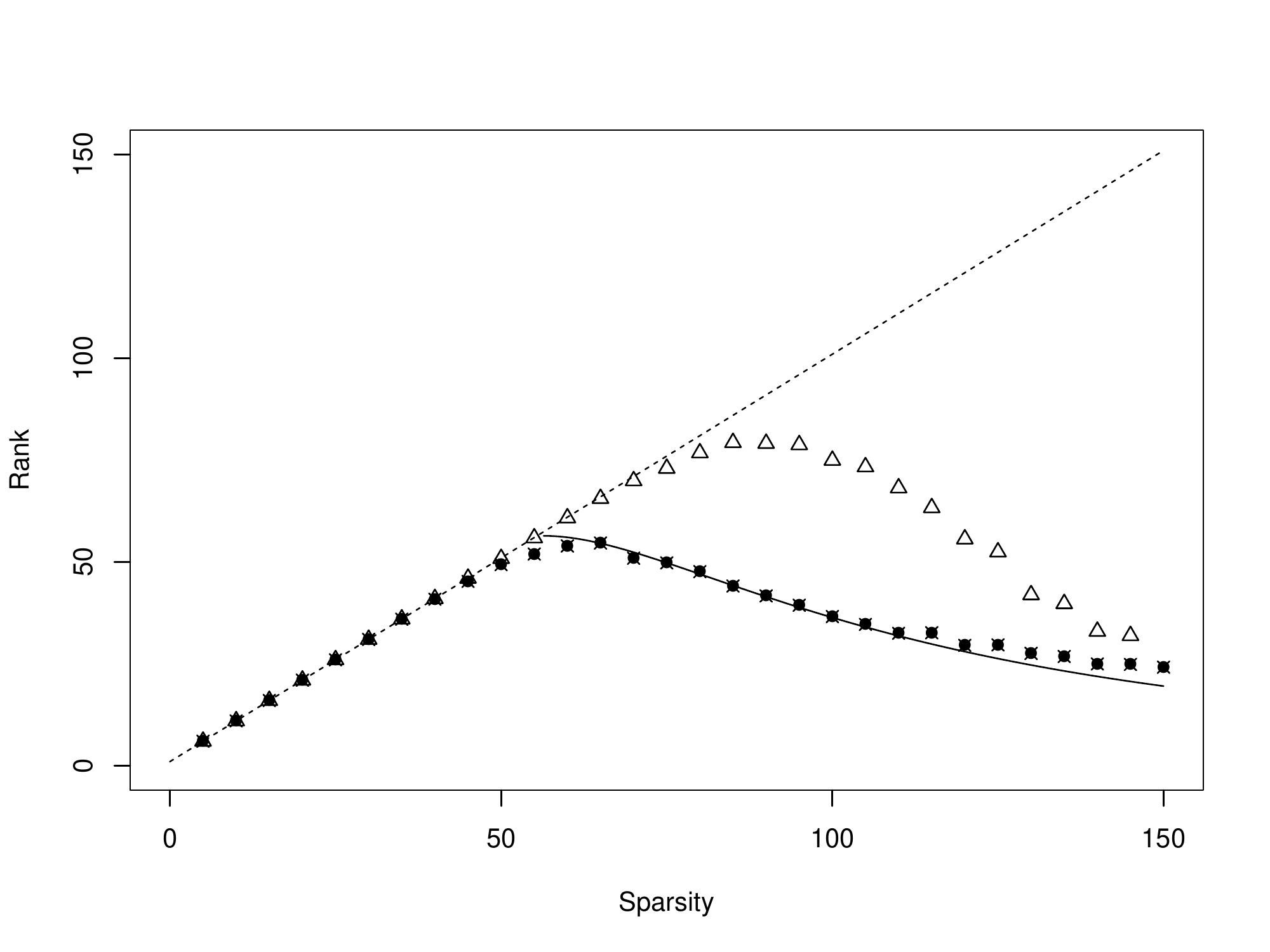}\\[-0.5em]
\hspace{0.01\linewidth} {\scriptsize (c)} \hspace{0.35\linewidth} {\scriptsize (d)} \hspace{0.48\linewidth}\\[-1.5em]
\includegraphics[width=0.4\textwidth, height=2.2in]{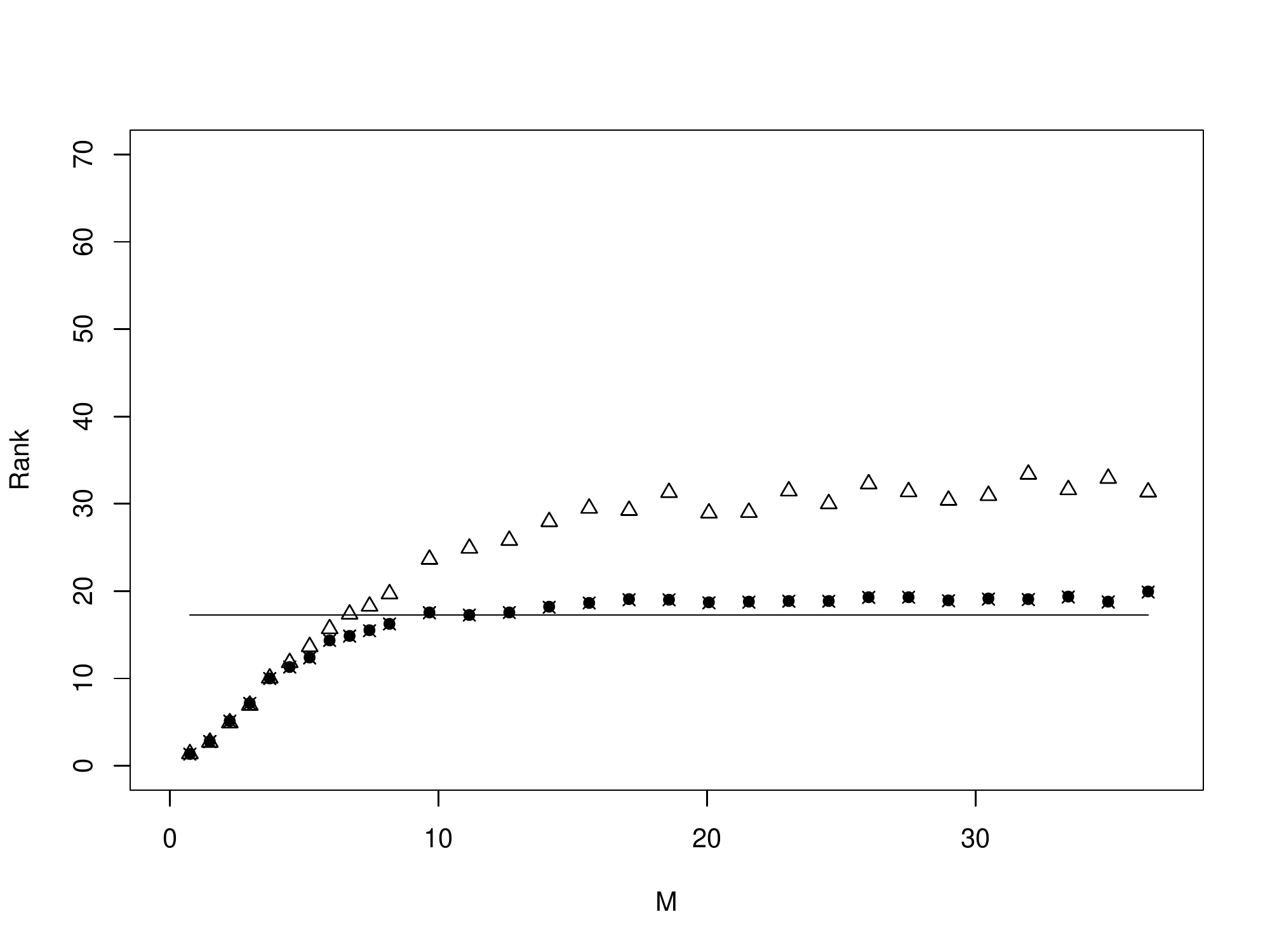}    
\includegraphics[width=0.4\textwidth, height=2.2in]{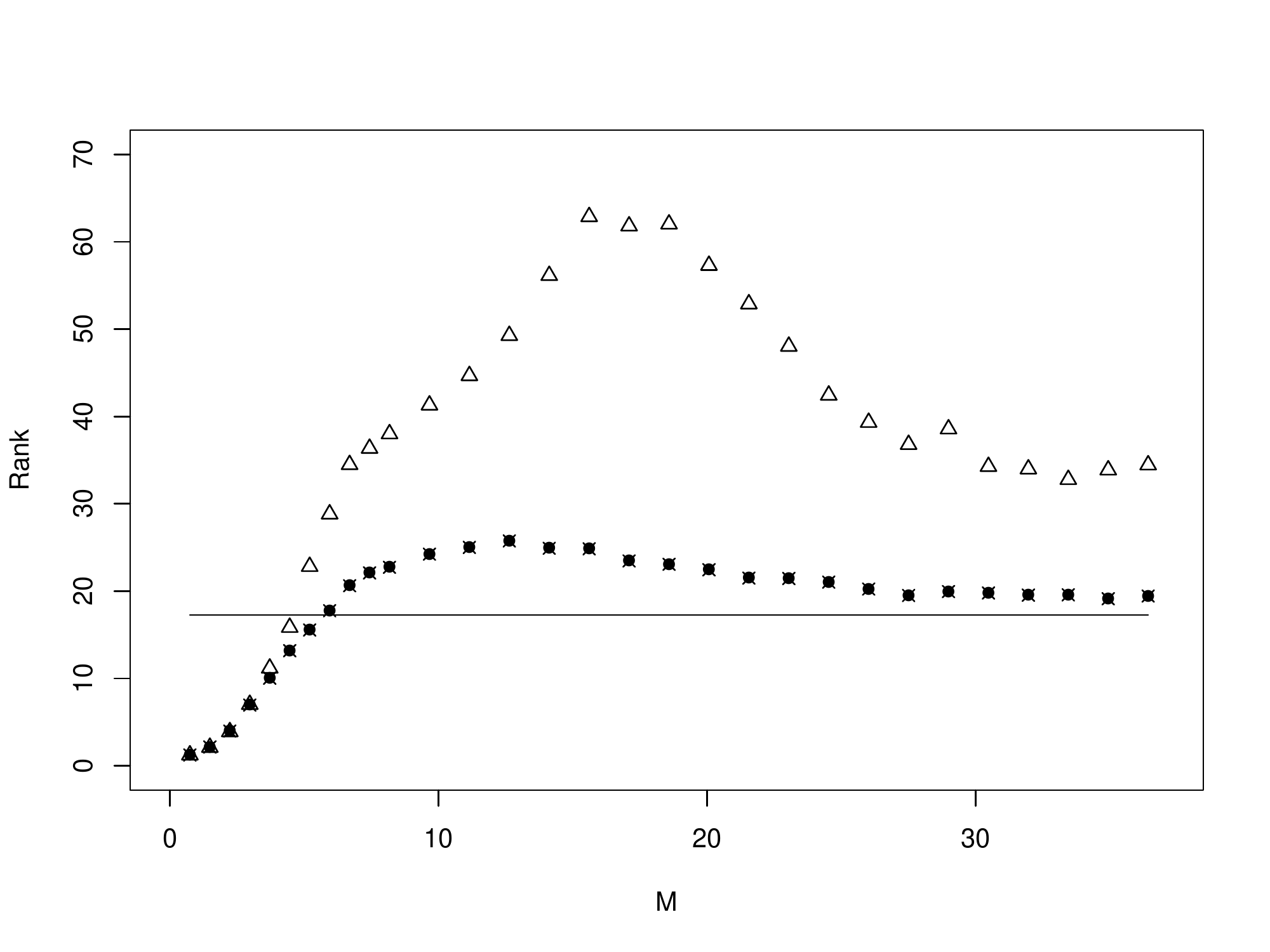}\\[-0.5em]
\hspace{0.01\linewidth} {\scriptsize (e)} \hspace{0.35\linewidth} {\scriptsize (f)} \hspace{0.48\linewidth}\\[-1.5em]
\includegraphics[width=0.4\textwidth, height=2.2in]{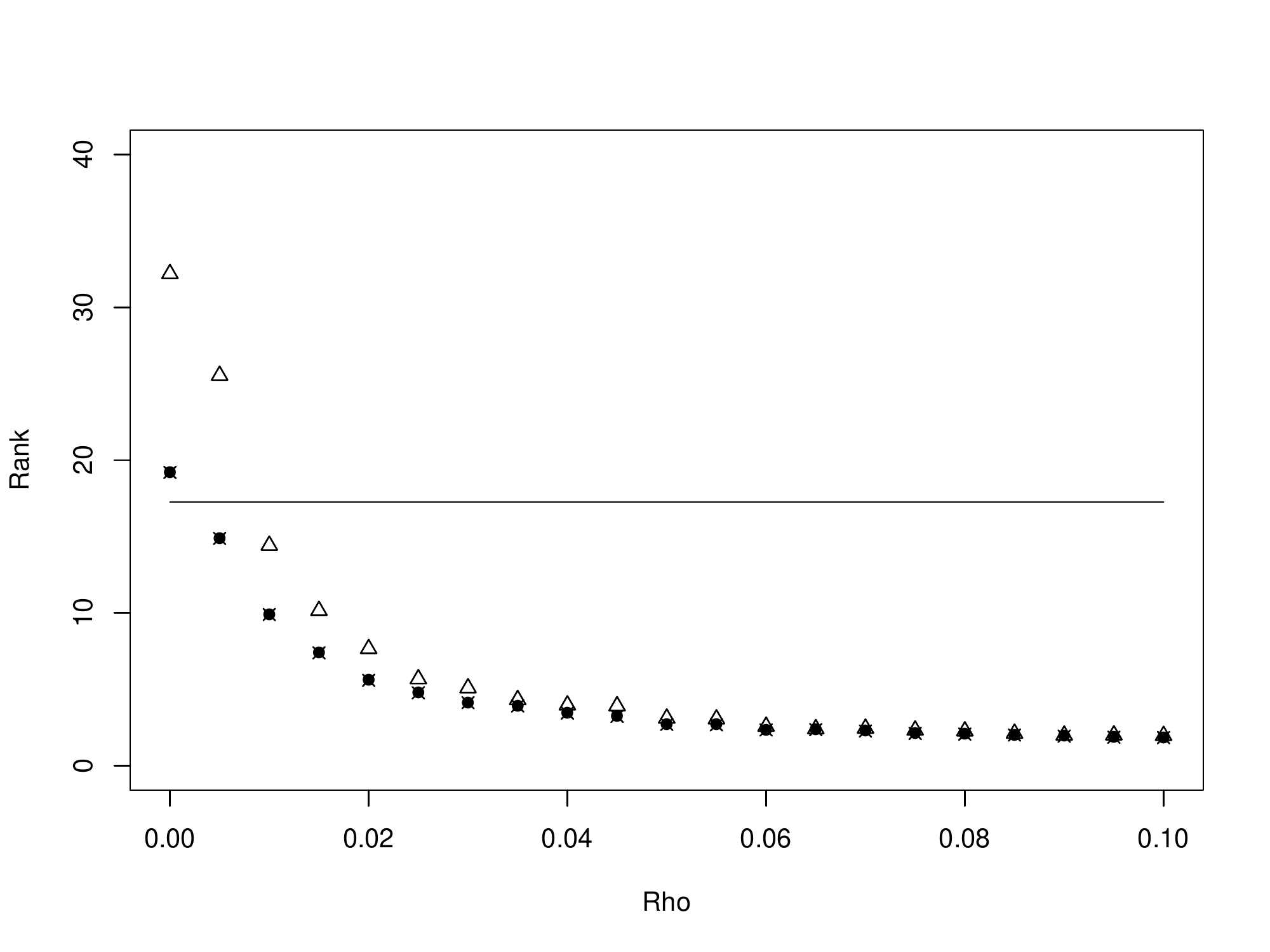}    
\includegraphics[width=0.4\textwidth, height=2.2in]{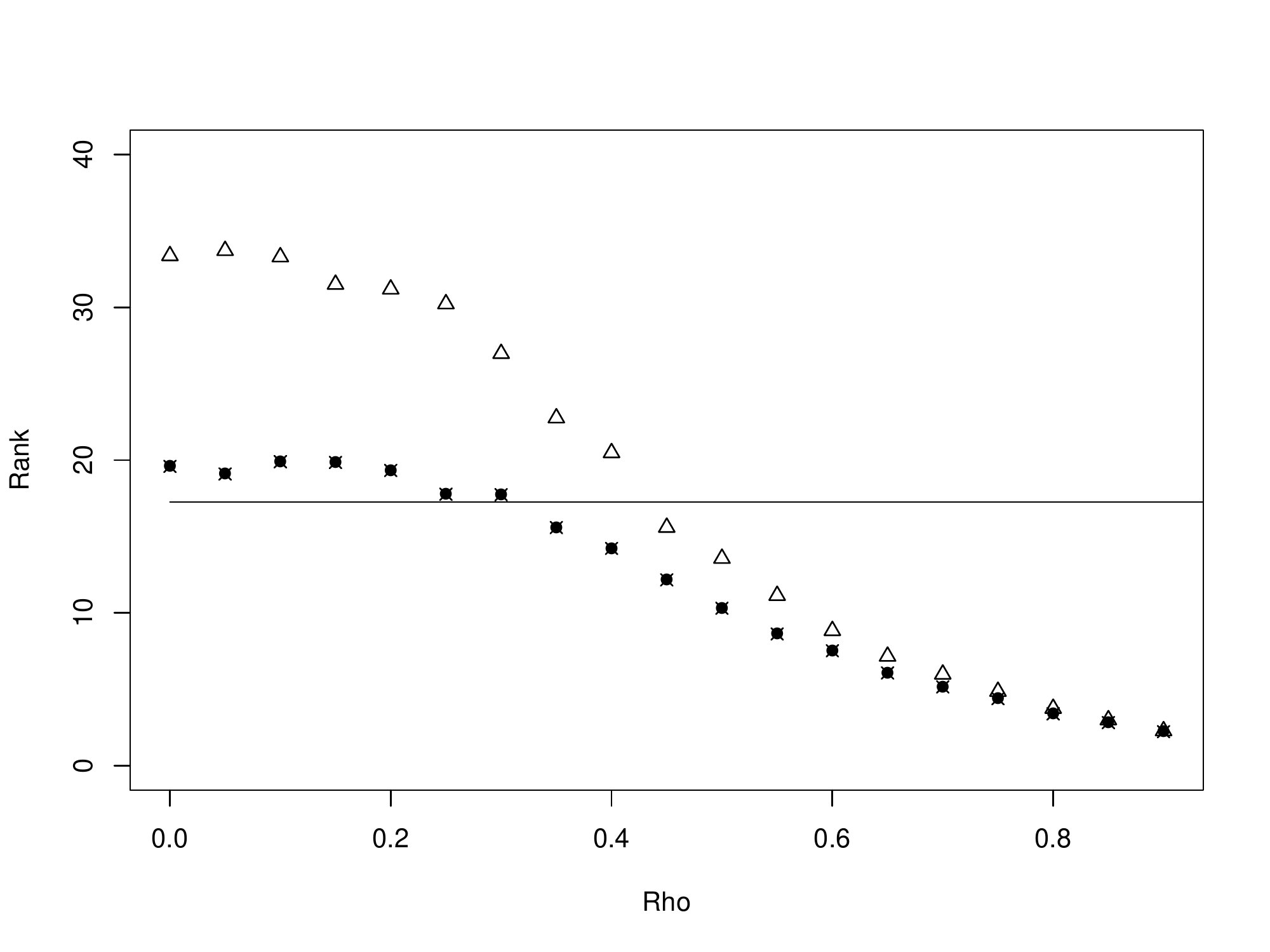}
\caption{Rank of the first spurious variable in three studies. Averaged over 500 replicates, the ranks of forward stepwise, the lasso, and least angle regression are marked with triangles, dots, and crosses, respectively (the dots and the crosses overlap exactly so they look like solid squares). The solid lines indicate the predictions given by Theorem \ref{thm:lower}. Note that for (c,d,e,f) the prediction is constant since $k = 80$ is fixed.}
\label{fig:study}
\end{figure}

\subsection{HIV data}
\label{sec:an-example-from}
As a real data example, we consider the HIV-1 data introduced by \citet{rhee2006genotypic} to study the genetic basis of HIV-1 resistance to several drugs. Also used in a number of other works \citep{barber2015controlling,g2016sequential,janson2016familywise}, this data set in particular contains genotype information $\bX \in \R^{634\times 463}$ of $634$ HIV-1-infected individuals across $463$ locations after removing duplicate and missing values. The columns of $\bX$ are standardized to have zero mean and unit Euclidean norm. The response $\by$ is synthetically generated by assigning an effect of $100\sqrt{2\log p}$ to each of $k$ uniformly randomly chosen columns of $\bX$ and setting a noise level $\sigma$ to 1.

Table \ref{tab:hiv} reports the results averaged over 500 replicates. The three methods start to have a decreasing rank around $k = 25$, which is much smaller than $n/(2\log p) = 51.6$. In addition, for each level of sparsity, the first spurious variable is included much earlier than the predictions. This gap is not surprising given that the predictions are tailored to independent Gaussian designs while the design $\bX$ from the HIV-1 data has strongly correlated columns. To be more precise, about $4600$ column pairs of $\bX$ have correlations greater than $10\%$.

\begin{table}[!htbp]
\fontsize{9}{9}\selectfont
\caption{Rank of the first selected noise predictor averaged over 500 runs, with standard errors given in parentheses. The predictions for sparsity no larger than $n/(2\log p) = 51.6$ are just given as $k+1$ and otherwise are given by Theorems \ref{thm:upper} and \ref{thm:lower}. The last row presents the predictions.}
\begin{tabular}{lccccccc}
& 10 & 25 & 40 & 55 & 70 & 85 & 100\\[3pt]
Lasso & 10.4 (2.1) & 15.4 (9.6) & 10.3 (9.2) & 6.8 (6.0) & 5.5 (4.6) & 4.4 (3.8) & 3.7 (3.5)\\
Least angle regression & 10.4 (2.1) & 15.4 (9.6) & 10.3 (9.2) & 6.8 (6.0) & 5.5 (4.6) & 4.4 (3.8) & 3.7 (3.5)\\
Forward stepwise & 10.6 (1.9) & 18.8 (10.0) & 18.5 (15.9) & 13.3 (16.0) & 10.0 (11.7) & 7.5 (8.2) & 7.0 (7.6)\\
Gaussian designs  &  11.0 &  26.0 &  41.0 & 51.3 & 45.7 & 38.3 & 31.8\\
\end{tabular}
\label{tab:hiv}
\end{table}


\section{Visualizing Early Noise Predictors}
\label{sec:improving-via-double}

As discussed in \S~\ref{sec:heur-expl}, the three sequential procedures are marginal correlation-based at the beginning of their solution paths, picking variables essentially according to the correlations with the residuals. In light of this viewpoint, an unbiased or low-bias estimator of the signals $\bbeta$ might provide sequential methods with complementary information for variable selection. The least-squares estimator, if available, is a natural candidate.

We introduce the \textit{double-ranking diagram} to bring together the strengths of sequential methods and low-bias estimators such as the least-squares estimator $\widehat{\bbeta}^{\textnormal{LS}}$. Figure \ref{fig:ranking} presents two instances of this diagram: one is in the same setting as Fig.~\ref{fig:intro} except for a different size $200 \times 180$ and a fixed sparsity $k = 50$, and another is in the same setting as Table \ref{tab:hiv} with a fixed sparsity $k = 60$. For each variable, the horizontal axis represents its rank by a sequential method, and the vertical axis represents its rank by a low-bias estimator. For example, the horizontal rank of the $j$th variable is given according to the magnitude of $|\widehat{\beta}^{\textnormal{LS}}_j|/\sqrt{[(\bX^\top \bX)^{-1}]_{jj}}$: the larger this statistic is, the smaller the rank is. Equivalently, the variables can be ranked using the $t$-values.

\begin{figure}[!htp]
\centering
\includegraphics[scale=0.55]{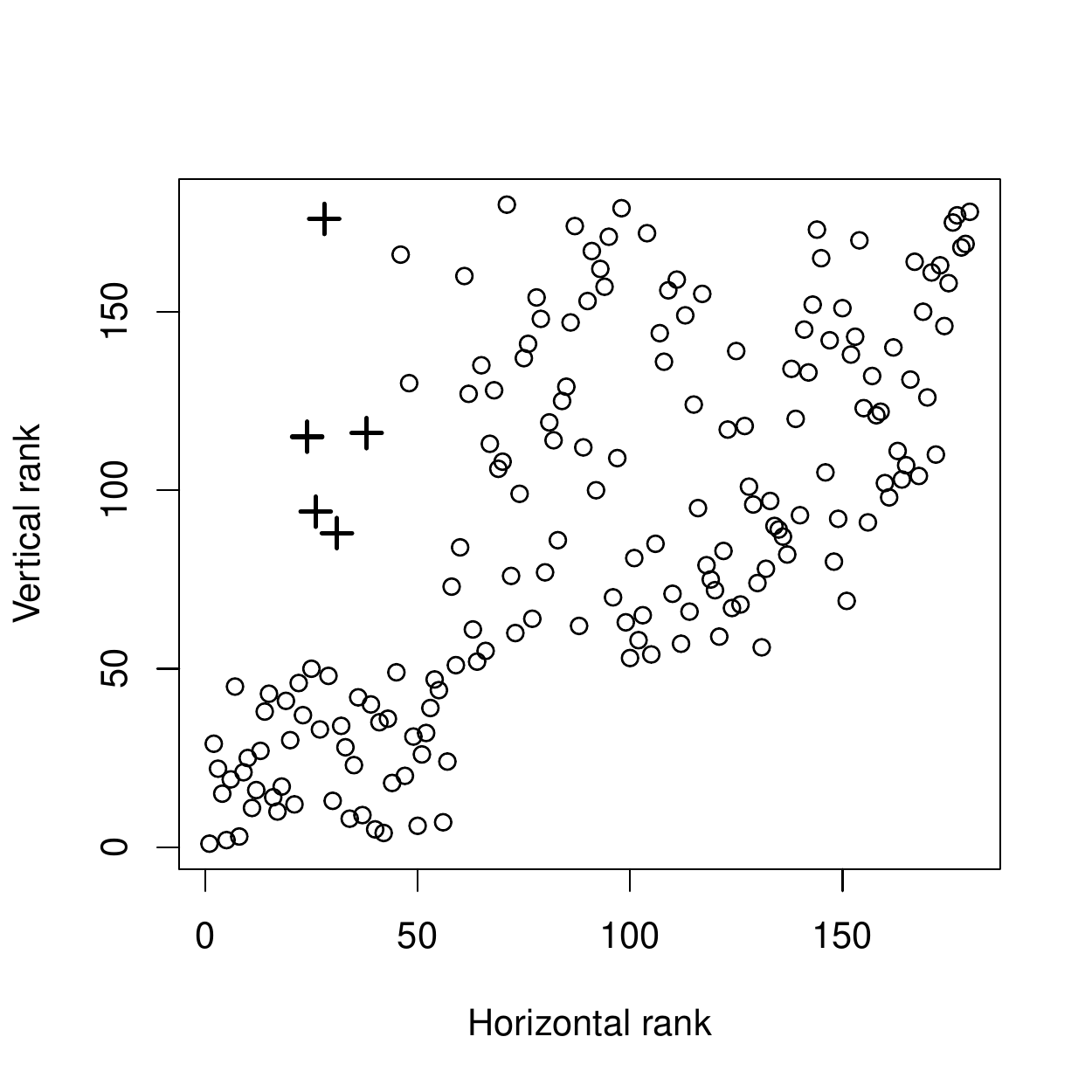}
\includegraphics[scale=0.55]{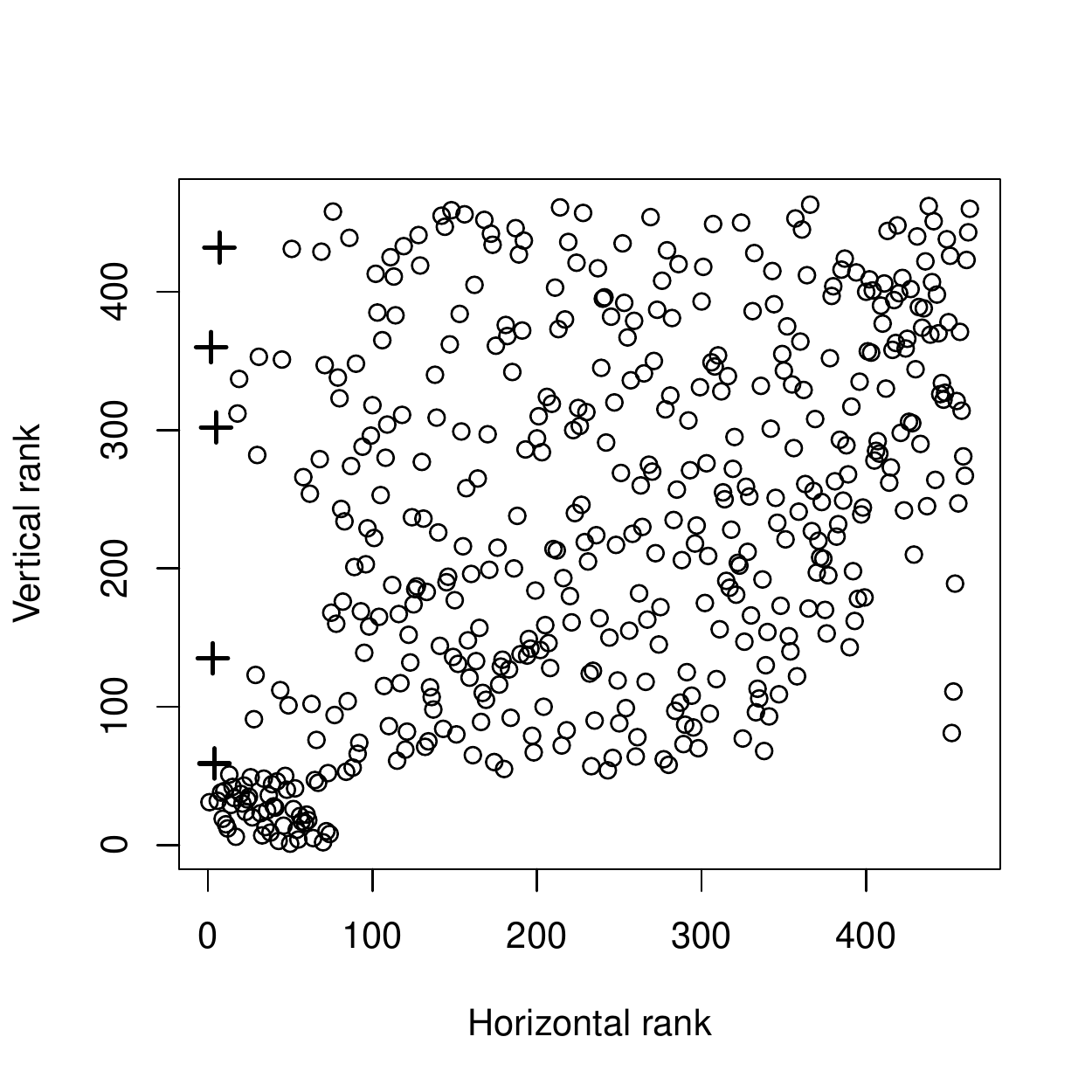}
\caption{Double-ranking diagrams. Left: in the same setting as Fig.~\ref{fig:intro}. Right: in the same setting as the HIV data example. Vertical rankings are given by the least-squares estimators and horizontal rankings are given by the least angle regression. The first five noise variables along the solution path of least angle regression are marked with crosses.}
\label{fig:ranking}
\end{figure}

The double-ranking diagram can serve as a simple data visualization tool to assist the identification of early false variables for sequential regression methods. Intuitively, an important variable would presumably possess both a small horizontal rank and a small vertical rank, hence appearing in the bottom-left corner of the diagram with a good chance. In light of this intuition, we screen out variables that are selected early by a sequential method but have unusually large vertical ranks, which in the case of least squares amount to small $t$-values or insignificant $p$-values. As seen from Fig.~\ref{fig:ranking}, the first five false variables in each instance have much larger vertical ranks compared with their horizontal ranks. In particular, these false variables are placed far away from the signal variables in the diagram. In view of this example, to use this diagram, one can set some threshold for the vertical rank and only select variables that are below the threshold and in addition have significant horizontal ranks. On the other hand, in the low signal-to-noise ratio regime the diagram may not give a clear-cut separation between false and true predictors, and its use requires some caution. The following simple proposition states that the diagram can perfectly separate the first spurious variable from all the true variables using the least-squares estimator under certain conditions.

\begin{proposition}\label{prop:diagram}
Under Assumption \ref{ass:working} and provided that $n > \delta p$ and 
\[
\frac{M}{\sigma} > 3\sqrt{\frac{2\delta\log p}{\delta - 1}}
\] 
for some constant $\delta > 1$, then in the double-ranking diagram the first noise variable has a greater vertical rank than all the true variables.
\end{proposition}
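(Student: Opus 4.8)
The plan is to show that, with probability tending to one, the ranking statistic $|\widehat{\beta}^{\textnormal{LS}}_j|/\sqrt{[(\bX^\top\bX)^{-1}]_{jj}}$ of the first noise variable falls strictly below that of \emph{every} signal variable; since a larger statistic means a smaller (better) vertical rank, this is exactly the claimed separation. Write $v_{jj} = [(\bX^\top\bX)^{-1}]_{jj}$. Because $n > \delta p > p$, the matrix $\bX^\top\bX$ is invertible almost surely and $\widehat{\bbeta}^{\textnormal{LS}} = \bbeta + (\bX^\top\bX)^{-1}\bX^\top\bz$ is well defined, so conditionally on $\bX$ each normalized coordinate $\widehat{\beta}^{\textnormal{LS}}_j/\sqrt{v_{jj}}$ is Gaussian with mean $\beta_j/\sqrt{v_{jj}}$ and variance $\sigma^2$. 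Thus, conditionally on $\bX$, a noise coordinate is a centered $\N(0,\sigma^2)$ variable while a signal coordinate is centered at $M/\sqrt{v_{jj}}$, and the whole argument reduces to comparing a maximum of roughly $p$ half-normals against the signal means.

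First I would control the leverage quantities $v_{jj}$ uniformly in $j$. The key identity is $v_{jj} = 1/(\bX_j^\top(\bm{I}-\bm{P}_{-j})\bX_j)$, where $\bm{P}_{-j}$ projects onto the span of the remaining columns; conditionally on those columns, $\bX_j^\top(\bm{I}-\bm{P}_{-j})\bX_j$ is distributed as $n^{-1}\chi^2_{n-p+1}$. A lower-tail chi-square bound together with a union bound over $j\le p$ (valid since $n-p > (\delta-1)p \gg \log p$ under Assumption \ref{ass:working}) then yields $\max_j v_{jj} \le (1+o_{\P}(1))\,n/(n-p) \le (1+o_{\P}(1))\,\delta/(\delta-1)$, using $n>\delta p$. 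This sharp bound $n/(n-p)$, rather than the cruder operator-norm bound $1/(1-\sqrt{p/n})^2$, is what ultimately matches the constant in the stated threshold, so obtaining it is the crucial quantitative step.

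Next, conditioning on a realization of $\bX$ in the above high-probability event, I would bound the two families of statistics. For the $p-k\le p$ noise coordinates, each $\widehat{\beta}^{\textnormal{LS}}_j/\sqrt{v_{jj}}$ is marginally $\N(0,\sigma^2)$, so a Gaussian maximal inequality and a union bound give $\max_{j:\beta_j=0}|\widehat{\beta}^{\textnormal{LS}}_j|/\sqrt{v_{jj}} \le (1+o_{\P}(1))\,\sigma\sqrt{2\log p}$, which in particular upper-bounds the statistic of the (unknown) first noise variable. For the signal coordinates, the reverse triangle inequality gives $|\widehat{\beta}^{\textnormal{LS}}_j|/\sqrt{v_{jj}} \ge M/\sqrt{v_{jj}} - \sigma|G_j|$ with $G_j\sim\N(0,1)$, and a union bound over the $k\le p$ of them combined with the $v_{jj}$ bound yields $\min_{j:\beta_j\ne 0}|\widehat{\beta}^{\textnormal{LS}}_j|/\sqrt{v_{jj}} \ge M\sqrt{(\delta-1)/\delta} - (1+o_{\P}(1))\,\sigma\sqrt{2\log p}$.

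Finally I would combine the two estimates: the separation holds once $M\sqrt{(\delta-1)/\delta} - (1+o_{\P}(1))\sigma\sqrt{2\log p} > (1+o_{\P}(1))\sigma\sqrt{2\log p}$, that is, once $M/\sigma > (2+o_{\P}(1))\sqrt{2\delta\log p/(\delta-1)}$. The hypothesis $M/\sigma > 3\sqrt{2\delta\log p/(\delta-1)}$ supplies the factor $3>2$ with room to spare, absorbing all the $o_{\P}(1)$ slack for large $p$ (and in the noiseless case $\sigma=0$ the noise statistics vanish identically while the signal statistics are strictly positive, so the claim is immediate). The main obstacle is the uniform control of $v_{jj}$ with the correct constant $n/(n-p)$; the Gaussian maxima are routine, and the only additional care needed is that the $v_{jj}$ concentration is a statement about $\bX$ alone, so one conditions on $\bX$ before invoking the Gaussianity of $\bz$.
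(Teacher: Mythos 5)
Your proposal is correct, and its overall skeleton matches the paper's: condition on $\bX$ so that the standardized coordinates $\widehat{\beta}_j^{\textnormal{LS}}/\sqrt{v_{jj}}$ (with your $v_{jj} = [(\bX^\top\bX)^{-1}]_{jj}$) are unit-scale Gaussians centered at $0$ for noise variables and at $M/\sqrt{v_{jj}}$ for signals, prove the uniform leverage bound $\max_j v_{jj} \le (1+o_{\P}(1))\, n/(n-p)$, and then compare Gaussian maxima, spending the factor $3>2$ in the hypothesis exactly as the paper does in its final computation. Where you genuinely depart from the paper is the proof of the leverage bound, the paper's display \eqref{eq:diag}, which is the technical heart of its argument. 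The paper proves it via the singular value decomposition $\bX = \bU\bD\bV^\top$: exchangeability of the vectors $\bV^\top\be_j$, the representation of a uniform spherical vector as a normalized Gaussian vector, a Gaussian quadratic-form concentration inequality, the trace asymptotics $\tr((\bD^\top\bD)^{-1}) = (1+o_{\P}(1))\,np/(n-p)$, and concentration of the smallest Wishart eigenvalue. You instead invoke the Schur-complement (leverage) identity $1/v_{jj} = \bX_j^\top(\bm{I} - \bm{P}_{-j})\bX_j$, which conditionally on $\bX_{-j}$ is exactly $n^{-1}\chi^2_{n-p+1}$, and finish with a chi-square lower-tail bound plus a union bound over the $p$ columns, which is legitimate here because $n-p > (\delta-1)p \gg \log p$. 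Your route is more elementary---no random-matrix spectral results and no quadratic-form concentration are needed---while delivering the same sharp constant $n/(n-p)$ that the stated threshold requires, together with explicit finite-sample tail bounds; it does lean on the exact conditional chi-square law, but this costs nothing in generality since the paper's rotational-invariance argument is equally Gaussian-specific. Your explicit treatment of the noiseless case $\sigma = 0$, where the claim is immediate, also patches a small point the paper glosses over by standardizing with $\sigma$ in the denominator.
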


The main ingredient behind this tool is a blend of new and old ideas found in the literature. On the one hand, our discussion in \S~\ref{sec:heur-expl} demonstrates that, while sequential methods work well in very sparse settings, as the signals get denser, the pseudo noise can accumulate quickly and thus may dwarf some true signals no matter how strong the corresponding coefficients are. On the other hand, the method of least squares favors the case of dense signals since the estimator variances basically stay the same as the sparsity of the signals increases. In particular, variables with sufficiently strong effects can stand out using the least-squares estimator in the presence of highly correlated columns in the design matrix. This property of the least-squares estimator and its
variants plays a pivotal role for a number of variable screening procedures \citep{wasserman2009high,pokarowski2015combined,wang2016high}.


\section{Discussion}
\label{sec:discussion}

In the regime of non-extreme sparsity, the common intuition that sequential regression procedures find a significant portion of all important variables before the first false variable merits some skepticism. We have developed sharp predictions that disprove this intuition for forward stepwise, the lasso, and least angle regression under independent Gaussian designs, which satisfy certain desirable properties for model selection. Additionally, the predictions hold irrespective of how strong the effect sizes are. Thus, the first noise variable is likely to occur very early in more general settings. Our numerical results are in agreement with this viewpoint.

In light of the above, more caution is required when using these sequential methods, unless the true regression coefficients are very sparse. Useful information for identifying early noise variables can be provided by low-bias methods such as the least-squares estimators. The double-ranking diagram is a simple tool that unifies the strengths of the two groups of methods.

Avenues for further investigation are in order. First, it is of interest to improve the predictions for forward stepwise and extend the predictions to more sequential methods such as backward stepwise and forward-backward stepwise. The simulation studies imply that the lower bound $c_4 n$ on the sparsity $k$ in Assumption \ref{ass:working} could be possibly relaxed to $n/(2\log p)$. Second, in the high-dimensional setting where $p > n$, which low-bias estimator should we choose for the double-ranking diagram to yield the vertical ranking? Candidates worth considering include the lasso with a small penalty, ridge regression with a small penalty, generalized least-squares estimators (see, for example, \citet{wang2016high}), and some recently proposed ranking procedures such as in
\cite{ke2017covariate}. It is also worth incorporating strategies proposed by \citet{fan2015discoveries} and \citet{fan2016guarding} to investigate spurious discoveries. Last, as seen from Table \ref{tab:hiv}, the rank of the first noise variable has relatively large standard errors. A question of practical relevance is to characterize this large variation.


\section*{Acknowledgements}
The author is grateful to Jianqing Fan, the editor, associate editor and two referees for their comments that improved the presentation of the paper. This work was supported in part by the National Science Foundation via grant CCF-1763314.

\bibliographystyle{plainnat}
\bibliography{ref}

\appendix

\clearpage
\section{Proofs}
\label{sec:appendix}

The appendix is devoted to proving the main technical results in the paper, namely Theorem \ref{thm:upper}, Theorem \ref{thm:lower}, Theorem \ref{thm:unique}, and Proposition \ref{prop:diagram}. Here we collect some notation used in the proofs. Denote by $S$ the true support set, that is, $S = \{j: \beta_j \ne 0\}$. Let $\bX_S$ be the matrix formed by columns from $S$, and $\bX_{-j}$ be the matrix derived by removing the $j$th column from $\bX$. We often use the letter $\bm\mu$ to denote $\bX \bbeta \equiv \bX_S \bbeta_S$, the signal part in linear regression. Throughout the Appendix, assume $M > 0$ in Assumption \ref{ass:working} and adopt the following notation:
\begin{equation}\label{eq:gamma_def}
\Gamma =  \frac{\bbeta^\top \bX^\top\by}{\sqrt{k}M\|\by\|} \equiv  \frac{\|\bm\mu\|^2 + \bz^\top \bm\mu}{\sqrt{k}M\|\by\|}, \quad D \equiv \frac{\|\by\|}{\sqrt{k}},
\end{equation}
where $\|\cdot\|$ denotes the usual $\ell_2$ norm $\|\cdot\|_2$.

\subsection{Theorem \ref{thm:upper}}
\label{sec:proof-theor-refthm:l-1}

We state some preparatory lemmas for the proof of Theorem \ref{thm:upper}. The proofs of these lemmas are given once the proof of Theorem \ref{thm:upper} for all the three sequential procedures is complete.

\begin{lemma}\label{lm:gamma}
Under Assumption \ref{ass:working}, for an arbitrary constant $c > 0$, we have
\[
W-c \le \Gamma \le 1 + c
\]
with probability tending to one.
\end{lemma}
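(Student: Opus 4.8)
The plan is to show that $\Gamma$ concentrates around the deterministic quantity $W := \sqrt{k}\,M/\sqrt{kM^2 + n\sigma^2}$, which automatically satisfies $0 \le W \le 1$; once $\Gamma = W + o_{\P}(1)$ is established, both inequalities $W - c \le \Gamma \le 1 + c$ follow for every fixed $c > 0$. The first ingredient is a clean description of the signal part $\bm\mu = \bX_S\bbeta_S = M\sum_{j \in S}\bX_j$: as a sum of $k$ independent $\N(\bzero,(M^2/n)\bm{I}_n)$ vectors it is exactly $\N(\bzero,(kM^2/n)\bm{I}_n)$, so $\|\bm\mu\|^2 \sim (kM^2/n)\,\chi^2_n$. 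Since $n \goto \infty$ under Assumption \ref{ass:working}, standard $\chi^2$ concentration gives $\|\bm\mu\|^2 = kM^2(1+o_{\P}(1))$.

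Next I would exploit the independence of $\bz$ and $\bX$ (hence of $\bz$ and $\bm\mu$) and, conditionally on $\bm\mu$, split $\bz$ into its projection onto $\bm\mu$ and the orthogonal complement. Writing $g := \bz^\top\bm\mu/\|\bm\mu\| \sim \N(0,\sigma^2)$ and letting $\bz_\perp$ be the orthogonal part with $\|\bz_\perp\|^2 \sim \sigma^2\chi^2_{n-1}$, we get $g = O_{\P}(\sigma)$ and $\|\bz_\perp\|^2 = n\sigma^2(1+o_{\P}(1))$. Then $\bm\mu^\top\by = \|\bm\mu\|^2 + \|\bm\mu\|\,g$ and $\|\by\|^2 = (\|\bm\mu\|+g)^2 + \|\bz_\perp\|^2$, so that
\[
\Gamma = \frac{\|\bm\mu\|}{\sqrt{k}\,M}\cdot f(\|\bm\mu\|+g), \qquad f(x) := \frac{x}{\sqrt{x^2 + \|\bz_\perp\|^2}}.
\]
The leading factor is $1+o_{\P}(1)$ by the first step, and substituting $\|\bm\mu\| = \sqrt{k}\,M(1+o_{\P}(1))$ together with $\|\bz_\perp\| = \sqrt{n}\,\sigma(1+o_{\P}(1))$ gives $f(\|\bm\mu\|) = W(1+o_{\P}(1))$.

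I expect the one genuinely delicate point to be passing from $f(\|\bm\mu\|)$ to the actual second factor $f(\|\bm\mu\|+g)$: Assumption \ref{ass:working} lets $\sigma$ and $M$ vary arbitrarily with $l$, so $g = O_{\P}(\sigma)$ cannot simply be declared small next to $\|\bm\mu\|$. The remedy is that $f$ is $1/\|\bz_\perp\|$-Lipschitz on $\R$ (indeed $f'(x) = \|\bz_\perp\|^2/(x^2+\|\bz_\perp\|^2)^{3/2} \le 1/\|\bz_\perp\|$), whence
\[
\bigl|f(\|\bm\mu\|+g) - f(\|\bm\mu\|)\bigr| \le \frac{|g|}{\|\bz_\perp\|},
\]
and this ratio is distribution-free in $\sigma$ since $|g|/\|\bz_\perp\|$ equals $|\N(0,1)|/\chi_{n-1}$ in law, which is $O_{\P}(1/\sqrt{n}) = o_{\P}(1)$ uniformly in $\sigma/M$ (and trivially $g = \|\bz_\perp\| = 0$ when $\sigma = 0$). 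Combining the two factors yields $\Gamma = W + o_{\P}(1)$, and since $0 \le W \le 1$ the stated bounds $W - c \le \Gamma \le 1 + c$ hold with probability tending to one for every fixed $c > 0$. This uniform control of the signal--noise cross term is the crux; the surrounding $\chi^2$ concentration estimates are routine.
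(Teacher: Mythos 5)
Your proof is correct, and it takes a genuinely different and in fact stronger route than the paper's. The paper never defines the symbol $W$ in the statement; its own proof effectively treats $W$ as $0$: the upper bound comes from Cauchy--Schwarz, $\Gamma \le \|\bX\bbeta\|/(\sqrt{k}M) = 1 + o_{\P}(1)$, and the lower bound comes from discarding the nonnegative term $\|\bm\mu\|^2$ and crudely bounding the cross term, $\Gamma \ge \bm\mu^\top\bz/(\sqrt{k}M\|\by\|) = O_{\P}(1/\sqrt{n})$, so that only $-c \le \Gamma \le 1+c$ is established. You instead identify the natural limit $W = \sqrt{k}M/\sqrt{kM^2+n\sigma^2}$ and prove the concentration $\Gamma = W + o_{\P}(1)$ via the exact geometric decomposition $\bm\mu^\top\by = \|\bm\mu\|(\|\bm\mu\|+g)$, $\|\by\|^2 = (\|\bm\mu\|+g)^2 + \|\bz_\perp\|^2$, handling the arbitrary scaling of $\sigma/M$ through the scale-free ratio $|g|/\|\bz_\perp\| \overset{d}{=} |\N(0,1)|/\chi_{n-1} = O_{\P}(1/\sqrt{n})$ (your Lipschitz bound $f'(x) = \|\bz_\perp\|^2/(x^2+\|\bz_\perp\|^2)^{3/2} \le 1/\|\bz_\perp\|$ is the right device here, and the $\sigma=0$ case is handled separately as you note). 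What each approach buys: the paper's argument is shorter and suffices for how the lemma is actually invoked (the proof of Theorem 1 only uses $\Gamma \in (-o(1), 1+o(1))$); your argument pins down the limit, gives the otherwise-undefined $W$ in the statement a concrete meaning under which the lemma is true as written, and as a bonus subsumes the paper's Lemma A.8 (\ref{lm:gamma_1}), since $W \goto 1$ precisely when $(\sigma/M)\sqrt{n/k} \goto 0$, so the two lemmas become one statement under your approach.
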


\begin{lemma}\label{lm:normal_log}
Let $\zeta_1, \ldots, \zeta_m$ be independent standard normals and $\zeta_{(1)} \ge \cdots \ge \zeta_{(m)}$ be the order statistics. For any (deterministic) sequence $\{i_m\}$ such that $i_m/m \goto 0$ as $m \goto \infty$, we have
\[
\zeta_{(i_m)} = \sqrt{2\log \frac{m}{i_m}} - (1 + o_{\P}(1))\frac{\log\log\frac{m}{i_m}}{2\sqrt{2\log\frac{m}{i_m}}}.
\]
\end{lemma}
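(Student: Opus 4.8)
The plan is to control $\zeta_{(i_m)}$ by counting how many of the $\zeta_j$ exceed a deterministic threshold and then inverting the Gaussian tail. Write $\overline\Phi = 1-\Phi$ for the standard normal survival function and, for any real $x$, let $N(x) = \#\{j : \zeta_j \ge x\}$, which is distributed as $\mathrm{Binomial}(m, \overline\Phi(x))$. The key equivalence is $\{\zeta_{(i_m)} \ge x\} = \{N(x) \ge i_m\}$. Abbreviate $L_m = \log(m/i_m) \goto \infty$ (valid since $i_m/m \goto 0$) and $B_m = (\log L_m)/(2\sqrt{2L_m})$, so that the target reads $\zeta_{(i_m)} = \sqrt{2L_m} - (1+o_{\P}(1))B_m$. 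I would introduce a slowly growing window $\omega_m = \log L_m \goto \infty$ and define two deterministic thresholds $x_m^-$ and $x_m^+$ by $\overline\Phi(x_m^-) = \omega_m i_m/m$ and $\overline\Phi(x_m^+) = i_m/(\omega_m m)$; since $\overline\Phi$ is decreasing, $x_m^- < x_m^+$.

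The sandwich step shows $x_m^- \le \zeta_{(i_m)} \le x_m^+$ with probability tending to one. For the lower bound, the mean of $N(x_m^-)$ equals $\omega_m i_m$ and its variance is at most $\omega_m i_m$, so Chebyshev's inequality gives $\pr(N(x_m^-) < i_m) \le \pr(|N(x_m^-) - \omega_m i_m| > (\omega_m-1)i_m) \le \omega_m/((\omega_m-1)^2 i_m) \goto 0$, using $\omega_m \goto \infty$ and $i_m \ge 1$. For the upper bound, the mean of $N(x_m^+)$ equals $i_m/\omega_m$, so Markov's inequality gives $\pr(N(x_m^+) \ge i_m) \le 1/\omega_m \goto 0$. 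Combining, $\pr(x_m^- \le \zeta_{(i_m)} \le x_m^+) \goto 1$. This argument is uniform across all growth rates of $i_m$: because the second-order target $B_m$ carries an extra factor $\log L_m$, a purely multiplicative window $\omega_m$ already suffices, whereas the naive additive window on the $\sqrt{i_m}$ count scale would break down for bounded $i_m$.

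It remains to expand $x_m^{\pm}$. Writing $x = \overline\Phi^{-1}(t)$ with $t \goto 0$, the Mills-ratio estimate $\overline\Phi(x) = \phi(x)(1 + O(1/x^2))/x$ yields, after taking logarithms, $x^2 = 2\log(1/t) - 2\log x - \log(2\pi) + O(1/x^2)$. Bootstrapping from the leading order $x \sim \sqrt{2\log(1/t)}$ gives $\log x = \tfrac12\log\log(1/t) + O(1)$ and hence $x^2 = 2\log(1/t) - \log\log(1/t) + O(1)$, so that $x = \sqrt{2\log(1/t)} - (1+o(1))\log\log(1/t)/(2\sqrt{2\log(1/t)})$. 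Applying this with $t = \omega_m i_m/m$ and $t = i_m/(\omega_m m)$, and using that the shift $\log\omega_m = \log\log L_m = o(L_m)$ perturbs $\log(1/t)$ only by a lower-order amount---so that $\sqrt{2\log(1/t)} = \sqrt{2L_m} + o(B_m)$ and $\log\log(1/t) = \log L_m + o(1)$---one finds that both thresholds satisfy $x_m^{\pm} = \sqrt{2L_m} - (1+o(1))B_m$. Since $\zeta_{(i_m)}$ lies between $x_m^-$ and $x_m^+$ with probability tending to one, the claimed expansion follows.

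I expect the main obstacle to be handling all growth rates of $i_m$ with a single argument while retaining enough precision to pin down the second-order $\log\log$ term: the counting fluctuations of $N(x)$ are only of Poisson scale when $i_m$ stays bounded, so an additive concentration window would be too coarse, and the resolution is to exploit the slack furnished by the $\log\log$ factor in $B_m$ via a slowly diverging multiplicative window. The remaining care is purely bookkeeping in the bootstrap inversion of $\overline\Phi$, where one must verify that the constants $\log 2$ and $\log(2\pi)$ as well as the $\log\omega_m$ shift all fall into the $o(B_m)$ remainder.
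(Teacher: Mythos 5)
Your proposal is correct, but note that the paper does not actually prove this lemma: the proof is omitted with a pointer to Chapter 2 of the cited extreme value theory text, so yours is the only self-contained argument on the table. Your route---counting exceedances $N(x)\sim\mathrm{Binomial}(m,\overline\Phi(x))$, sandwiching $\zeta_{(i_m)}$ between two deterministic quantiles $x_m^{\pm}$ via Chebyshev and Markov, then inverting the Gaussian tail through the Mills ratio---is essentially the standard treatment of intermediate order statistics specialized to the Gaussian case, and all the steps check out. The multiplicative window $\omega_m=\log L_m$ is the right device: it makes the bracketing work uniformly over bounded $i_m$ (where fluctuations of $N(x)$ are only Poisson-scale) and growing $i_m$, and the reason such a crude bracket suffices is exactly the one you identify, namely that the target's second-order term carries a $\log L_m$ factor, which dominates both the $\log\omega_m=\log\log L_m$ shift in the quantile and the $O(1)$ constants ($\log 2$, $\log 2\pi$) from the Mills-ratio inversion. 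The expansion bookkeeping is also right: $\log(1/t)=L_m\pm\log\log L_m$ perturbs $\sqrt{2\log(1/t)}$ by $O(\log\log L_m/\sqrt{L_m})=o(B_m)$ and perturbs $\log\log(1/t)$ by $o(1)$, so both thresholds collapse onto $\sqrt{2L_m}-(1+o(1))B_m$, and the in-probability statement follows from the deterministic two-sided bracket holding with probability tending to one. What your proof buys over the paper's citation is self-containedness and uniformity across all growth rates of $i_m$ in a single argument, where textbook treatments typically split the extreme and intermediate regimes.
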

The proof of Lemma \ref{lm:normal_log} is omitted. Interested readers can find its proof in Chapter 2 of \cite{extremevalue}.

\begin{lemma}\label{lm:max_corr}
Under Assumption \ref{ass:working}, we have
\[
\max_j |\bX_j ^\top \by| \le 2D\sqrt{\frac{2k \log p}{n}}
\]
with probability converging to one.
\end{lemma}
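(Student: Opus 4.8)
The plan is to prove the equivalent inequality
\[
\max_j |\bX_j^\top \by| \;\le\; 2\,\|\by\|\,\sqrt{\frac{2\log p}{n}},
\]
which coincides with $2D\sqrt{2k\log p/n}$ because $D=\|\by\|/\sqrt{k}$ and the factor $\sqrt{k}$ cancels. The one genuine difficulty is that for a signal index $j\in S$ the column $\bX_j$ is correlated with $\by$ through $\bm\mu=\bX\bbeta$, so $\bX_j^\top\by$ is not a centered Gaussian. To decouple this I would use a leave-one-out decomposition that is valid for \emph{every} index: setting $\bv_j:=\by-\beta_j\bX_j=\bX_{-j}\bbeta_{-j}+\bz$, which is independent of $\bX_j$, one has
\[
\bX_j^\top\by=\beta_j\|\bX_j\|^2+\bX_j^\top\bv_j .
\]
For $j\notin S$ this degenerates to $\bv_j=\by$ and the bias term disappears, so a single argument handles all $p$ indices simultaneously.

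For the stochastic term, conditioning on $\bv_j$ turns $\bX_j^\top\bv_j$ into a centered Gaussian of variance $\|\bv_j\|^2/n$, since $\bX_j$ has i.i.d.\ $\N(0,1/n)$ entries independent of $\bv_j$. A Gaussian tail bound, uniform in $\bv_j$, together with a union bound over all $p$ indices then yields, with probability tending to one,
\[
\max_j \frac{|\bX_j^\top\bv_j|}{\|\bv_j\|/\sqrt{n}}\;\le\;1.1\sqrt{2\log p},
\]
where $1.1$ is chosen so that the failure probability $2p^{-0.21}$ vanishes. Two cheap concentration facts close out this term: first, $\max_j\|\bX_j\|^2\le 2$ with probability at least $1-p\,\e^{-cn}\goto 1$ (a $\chi^2_n/n$ deviation bound, affordable because Assumption \ref{ass:working} gives $n\gtrsim p/\log^{c_2}p$, so the exponential beats the polynomial $p$); and second, $\|\bv_j\|\le \|\by\|+\sqrt{2}\,|\beta_j|\le (1+o(1))\|\by\|$ uniformly, the last step using $|\beta_j|\le M=o(\|\by\|)$ established below.

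It remains to show the bias $\beta_j\|\bX_j\|^2\le 2M$ is negligible against the target, i.e.\ $M=o\!\big(\|\by\|\sqrt{(\log p)/n}\big)$. Here I would invoke the lower bound $\|\by\|\ge(1-o(1))M\sqrt{k}$ with probability tending to one, obtained by concentrating $\|\by\|^2$ around its conditional mean $\|\bm\mu\|^2+n\sigma^2\ge\|\bm\mu\|^2$ and noting $\|\bm\mu\|^2=M^2\|\sum_{i\in S}\bX_i\|^2\approx kM^2$. Then $\|\by\|\sqrt{(\log p)/n}\gtrsim M\sqrt{k(\log p)/n}\ge M\sqrt{c_4\log p}$, and since Assumption \ref{ass:working} forces $k\ge c_4 n\gg n/\log p$ we have $\sqrt{c_4\log p}\goto\infty$, so indeed $2M=o\!\big(\|\by\|\sqrt{2\log p/n}\big)$. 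Combining the pieces with $\|\bv_j\|\le(1+o(1))\|\by\|$ gives, with probability tending to one,
\[
\max_j|\bX_j^\top\by|\;\le\;2M+1.1(1+o(1))\,\|\by\|\sqrt{\frac{2\log p}{n}}\;\le\;2\,\|\by\|\sqrt{\frac{2\log p}{n}},
\]
which is the claim.

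The main obstacle is exactly the signal-index dependence between $\bX_j$ and $\by$; once the leave-one-out device reduces each inner product to a conditionally Gaussian part plus a deterministic-size bias, the rest is routine. What makes everything fit together cleanly is the generous factor $2$ in the stated bound: it simultaneously absorbs the union-bound constant, the harmless discrepancy between $\log k$ and $\log p$ for signal indices, and the bias term $M$, the last of which is controlled purely through the sparsity lower bound $k\ge c_4 n$.
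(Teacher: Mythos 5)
Your proposal is correct and follows essentially the same route as the paper's proof: the same leave-one-out decomposition $\bX_j^\top\by = \beta_j\|\bX_j\|^2 + \bX_j^\top(\bX_{-j}\bbeta_{-j}+\bz)$, conditional Gaussian tails plus a union bound for the stochastic term, $\chi^2_n$ concentration for $\|\bX_j\|^2$, and the bias term $M$ absorbed via $\|\by\|\gtrsim M\sqrt{k}$ together with $k \ge c_4 n$ forcing $\sqrt{2k\log p/n}\to\infty$. The only difference is cosmetic: you treat all indices uniformly (with $\bv_j=\by$ for noise indices), whereas the paper splits the noise and signal cases, handling the latter exactly as you do.
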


\begin{lemma}[for the lasso case]\label{lm:small_proj}
Fit $\by$ on the true support $\bX_S$ using the lasso and denote by $\widehat\bbeta^S(\lambda)$ the lasso solution with penalty $\lambda$. Then, under Assumption \ref{ass:working}, there exists a constant $C$ such that
\[
\max_{i:\widehat\bbeta_i^S(\lambda) = 0} \left| \bX_i^\top \bX \widehat\bbeta^S(\lambda)\right| \le 
\frac{C\sqrt{k} \|\widehat\bbeta^S(\lambda)\|_0\log p}{n} \cdot D
\]
holds uniformly for all $\lambda > 0$ with probability tending to one, where we make the (unusual) convention that $\max \emptyset = 0$.
\end{lemma}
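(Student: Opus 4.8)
The plan is to reduce the statement to a bound on the single bilinear form $\bm{g}_i^\top\bm{\alpha}$ and then split the argument according to the size of the active set. Write $\widehat S=\{i:\widehat\beta_i^S(\lambda)\neq 0\}$ for the active set of the support‑restricted lasso, $s:=\|\widehat\bbeta^S(\lambda)\|_0=|\widehat S|$ (note $\widehat S\subseteq S$), and $\widehat{\bm\mu}:=\bX\widehat\bbeta^S(\lambda)=\bX_{\widehat S}\bm{\alpha}$, where $\bm{\alpha}$ collects the active coefficients. If $\widehat S=\emptyset$ the maximum is $0$ by convention, so I assume $s\ge 1$. For any $i$ with $\widehat\beta_i^S(\lambda)=0$ one has $i\notin\widehat S$ and
\[
\bX_i^\top\bX\widehat\bbeta^S(\lambda)=\bX_i^\top\widehat{\bm\mu}=\bm{g}_i^\top\bm{\alpha},\qquad \bm{g}_i:=\bX_{\widehat S}^\top\bX_i .
\]
Since all randomness sits in $(\bX,\by)$, I would fix one $\lambda$‑free event $\mathcal{E}$ on which, simultaneously, (i) $\max_{i\neq j}|\bX_i^\top\bX_j|\le C_0\sqrt{(\log p)/n}$, (ii) $\max_j\|\bX_j\|\le 1+o(1)$, (iii) the Gaussian restricted isometry property holds for all column subsets of size at most $s_0\asymp n/\log p$, and (iv) the conclusion of Lemma~\ref{lm:max_corr} holds. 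Each has probability tending to one \citep{CanTaoDecode05}, so $\pr(\mathcal{E})\to 1$; proving the inequality on $\mathcal{E}$ for every $\lambda$ then delivers the uniform‑in‑$\lambda$ claim at once.

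On the main regime $s\le s_0$, property (iii) makes $\bm{G}:=\bX_{\widehat S}^\top\bX_{\widehat S}$ well conditioned, $\|\bm{G}^{-1}\|_{\mathrm{op}}\le 2$. I would read $\bm{\alpha}$ off the lasso stationarity condition on the active block, $\bX_{\widehat S}^\top(\by-\widehat{\bm\mu})=\lambda\bm{s}$ with $\|\bm{s}\|_\infty=1$, giving $\bm{\alpha}=\bm{G}^{-1}(\bX_{\widehat S}^\top\by-\lambda\bm{s})$. Because $\widehat\bbeta^S(\lambda)=\bm 0$ for $\lambda\ge\lambda_{\max}=\max_{j\in S}|\bX_j^\top\by|$, I may take $\lambda\le\lambda_{\max}\le 2D\sqrt{2k(\log p)/n}$ by (iv); using (iv) again to bound $\|\bX_{\widehat S}^\top\by\|\le\sqrt{s}\max_j|\bX_j^\top\by|$ and $\lambda\|\bm{s}\|\le\lambda\sqrt{s}$, I obtain $\|\bm{\alpha}\|\le C_1\sqrt{s}\,D\sqrt{k(\log p)/n}$ and hence $\|\bm{\alpha}\|_1\le\sqrt{s}\,\|\bm{\alpha}\|\le C_1 s\,D\sqrt{k(\log p)/n}$. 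Pairing this with the Gram bound $\|\bm{g}_i\|_\infty\le C_0\sqrt{(\log p)/n}$ from (i) yields
\[
|\bX_i^\top\widehat{\bm\mu}|=|\bm{g}_i^\top\bm{\alpha}|\le\|\bm{g}_i\|_\infty\|\bm{\alpha}\|_1\le C_0C_1\,\frac{s\sqrt{k}\,\log p}{n}\,D,
\]
which is the asserted bound.

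When $s>s_0$ the matrix $\bX_{\widehat S}$ may be ill conditioned and this argument breaks down, which is exactly the range the restricted isometry property cannot reach. Here, however, the right‑hand side is already of order at least $\sqrt{k}\,D$, so a crude bound suffices: by (ii) and the elementary inequality $\|\widehat{\bm\mu}\|\le 2\|\by\|=2\sqrt{k}\,D$ (obtained by comparing the lasso objective at $\widehat\bbeta^S(\lambda)$ with its value at $\bm 0$),
\[
|\bX_i^\top\widehat{\bm\mu}|\le\|\bX_i\|\,\|\widehat{\bm\mu}\|\le(2+o(1))\sqrt{k}\,D\le C\,\frac{s\sqrt{k}\,\log p}{n}\,D ,
\]
where the last step uses that $s>s_0$ forces $s\log p/n$ to be bounded below. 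Taking $C$ larger than the constants from both regimes completes the proof. I expect the genuinely delicate points to be (a) verifying that the Gaussian restricted isometry property does hold up to sparsity $s_0\asymp n/\log p$ under Assumption~\ref{ass:working} — it does, because $\log(p/s_0)=O(\log\log p)\ll\log p$, so the sample‑size requirement $n\ge c\,s_0\log(p/s_0)$ is comfortably met — and (b) matching the crossover $s_0\asymp n/\log p$ so the two regimes overlap and a single constant $C$ works; once $\mathcal{E}$ is fixed, uniformity over $\lambda$ then comes for free.
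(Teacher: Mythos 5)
Your proof is correct, but it follows a recognizably different route from the paper's in both regimes, so a comparison is worthwhile. Both arguments split on the active-set size $s=\|\widehat\bbeta^S(\lambda)\|_0$ and combine KKT conditions with control of the Gram matrix, but the thresholds and the tools differ. The paper splits at $s \asymp \sqrt{n/\log p}$: for small $s$ it uses only pairwise incoherence, extracting an $\ell_\infty$ bound on the largest fitted coefficient from the KKT stationarity equation at that coordinate (a diagonal-dominance argument, which is precisely what forces the threshold $s\sqrt{(\log p)/n}\lesssim 1$), and then bounds $|\bX_i^\top\bX\widehat\bbeta^S|\le \|\widehat\bbeta^S\|_0\max_j|\widehat\beta_j^S|\max_{i\ne j}|\bX_i^\top\bX_j|$; for large $s$ it routes the bound through $\by$, using dual feasibility $\lambda\le\lambda_{\max}$ together with Lemma \ref{lm:max_corr} to get a bound of order $\sqrt{k(\log p)/n}\,D$. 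You instead split at $s\asymp n/\log p$: for small $s$ you invoke the restricted isometry property to invert the active Gram matrix and get an $\ell_2$ (hence $\ell_1$) bound on the active coefficients, then finish by H\"older against the incoherence bound; for large $s$ the crude Cauchy--Schwarz bound $(2+o(1))\sqrt{k}\,D$, via the objective comparison $\|\bX\widehat\bbeta^S\|\le 2\|\by\|$, already suffices because the right-hand side is of order $\sqrt{k}\,D$ in that range. The trade-off is that you need the heavier RIP tool (which does hold at sparsity $n/\log p$ under Assumption \ref{ass:working}, exactly as you argue, and is in any case already used by the paper in proving Theorem \ref{thm:unique}), in exchange for which your large-$s$ case becomes essentially trivial. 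Two further points in your favor: fixing a single $\lambda$-free event $\mathcal{E}$ up front makes the uniformity over $\lambda$ explicit, which the paper leaves implicit; and your decomposition $\bX_i^\top\bX\widehat\bbeta^S=\bm{g}_i^\top\bm{\alpha}$ only ever uses KKT information on the active block, sidestepping the step in the paper's large-$s$ case where the dual-feasibility bound $|\bX_j^\top(\by-\bX\widehat\bbeta^S)|\le\lambda$ is invoked for all $j$, including coordinates $j\notin S$ that carry no KKT constraint in the support-restricted program.
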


Above, $\|\cdot\|_0$ equals the number of nonzero components of a vector, and $\widehat\bbeta^S$ is a $p$-dimensional vector which takes $0$ on $\overline S := \{1, \ldots, p\}\setminus S$. Next, we proceed to a definition that is the subject of Lemma \ref{lm:IJbound}.

\begin{definition}\label{def:only}
Let $I_c(\gamma)$ be the largest positive integer $I$ such that
\[
\gamma + \sqrt{\frac{2k\log(k/I)}{n}} > \sqrt{\frac{2k \log(p-k)}{n}} - c,
\]
and set $I_c(\gamma)$ to $0$ if it does not exist. Let $J_c(\gamma)$ be the largest positive integer $J$ such that
\[
-\gamma + \sqrt{\frac{2k\log(k/J)}{n}} > \sqrt{\frac{2k \log(p-k)}{n}} - c,
\]
and set $J_c(\gamma)$ to $0$ if it does not exist. 
\end{definition}

\begin{lemma}\label{lm:IJbound}
Let $c$ be a constant. Under Assumption \ref{ass:working}, all the following statements are true.
\begin{itemize}
\item[(a)] Assume $c > 0$. If $-c/2 < \gamma \le 0$, then $J_c(\gamma) \ge I_c(\gamma)$, and
\[
\log J_c(\gamma) = (1+o(1))\left[ (c-\gamma)\sqrt{\frac{2n\log p}{k}} - \frac{(c-\gamma)^2n}{2k} + \log\frac{n}{2p \log p}\right].
\]

\item[(b)] Assume $c > 0$. If $0 < \gamma \le 1.01$ and $\sqrt{2k \log(p-k)/n} >  c + \gamma + c'$
for some constant $c' > 0$ (which is used to guarantee that $I_c(\gamma) \ne k$), then we have $I_c(\gamma) \ge J_c(\gamma)$ and
\[
\log I_c(\gamma) = (1+o(1))\left[(c+\gamma)\sqrt{\frac{2n\log p}{k}} - \frac{(c+\gamma)^2n}{2k} + \log\frac{n}{2p \log p}\right].
\]

\item[(c)]
Assume $-0.1 < c < 0$. If $0.11 < \gamma \le 1.01$, then $I_c(\gamma) \ge J_c(\gamma)$ and
\[
\log I_c(\gamma) = (1+o(1))\left[(c+\gamma)\sqrt{\frac{2n\log p}{k}} - \frac{(c+\gamma)^2n}{2k} + \log\frac{n}{2p \log p}\right].
\]
\end{itemize}
\end{lemma}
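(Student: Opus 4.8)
The plan is to treat the two sequences $I_c(\gamma)$ and $J_c(\gamma)$ through a single monotone auxiliary function and reduce the whole statement to one explicit inversion. Write $A := \sqrt{2k\log(p-k)/n}$ and, for a real threshold $x$, let $f(x)$ be the largest positive integer $I$ with $\sqrt{2k\log(k/I)/n} > x$ (set $f(x)=0$ if none exists). Directly from Definition~\ref{def:only}, $I_c(\gamma) = f(A-c-\gamma)$ and $J_c(\gamma) = f(A-c+\gamma)$. Since $g(I) := \sqrt{2k\log(k/I)/n}$ is strictly decreasing in $I$, the set $\{I: g(I)>x\}$ is an initial segment, so $f$ is non-increasing in $x$. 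The comparison claims are then immediate: when $\gamma \le 0$ (case (a)) we have $A-c-\gamma \ge A-c+\gamma$, hence $I_c(\gamma) \le J_c(\gamma)$; when $\gamma>0$ (cases (b), (c)) the inequality reverses and $I_c(\gamma) \ge J_c(\gamma)$. This settles the ordering in all three parts with no computation.

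Next I would invert $f$ in closed form. For $x>0$ the condition $g(I)>x$ is equivalent to $I < k\,\e^{-nx^2/(2k)}=:Y$, so $f(x)$ is the largest integer below $Y$; thus $f(x)\in[Y-1,Y)$ and therefore $\log f(x) = \log k - nx^2/(2k) + o(1)$ once $Y\to\infty$. Taking $x=A-(c+\gamma)$ for $I_c$, expanding the square and using $A^2 = 2k\log(p-k)/n$ and $nA/k = \sqrt{2n\log(p-k)/k}$ gives
\[
\log I_c(\gamma) = (c+\gamma)\sqrt{\tfrac{2n\log(p-k)}{k}} - \tfrac{(c+\gamma)^2 n}{2k} + \log\tfrac{k}{p-k} + o(1),
\]
with the analogous identity for $\log J_c(\gamma)$ obtained by replacing $c+\gamma$ with $c-\gamma$. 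Before using this I must check $x>0$ and $Y\to\infty$: under Assumption~\ref{ass:working}, $k/n\ge c_4$ and $\log(p-k)\to\infty$ force $A\to\infty$, while $c+\gamma$ (resp.\ $c-\gamma$) is bounded, so $x\to\infty$, and the explicit formula then shows the right-hand side diverges, giving $Y\to\infty$. The hypothesis $A>c+\gamma+c'$ in part (b) is exactly what keeps $I_c(\gamma)\ne k$, preventing the degenerate boundary where $\log(k/I)\approx 0$; in part (c) this is automatic since $A\to\infty$ dominates the bounded $c+\gamma$, and in (a) the coefficient $c-\gamma\ge c$ plays the same role for $J_c$.

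It remains to reconcile this identity with the asserted expression, which differs in two places: $\log(p-k)$ versus $\log p$ in the leading term and $\log\frac{k}{p-k}$ versus $\log\frac{n}{2p\log p}$ in the constant. The crux is that the leading term dominates everything else. In every case the relevant coefficient is bounded below by a positive constant ($c+\gamma>c$ in (b), $c+\gamma>0.01$ in (c), $c-\gamma\ge c$ in (a)), and $k\le c_5 n\log^{0.99}p$ gives $n/k \ge (c_5\log^{0.99}p)^{-1}$, so the leading term is of order at least $\log^{0.005}p$, which dominates $\log\log p$. The leading-term discrepancy is genuinely small: $\sqrt{\log(p-k)}-\sqrt{\log p} = \log(1-k/p)/(\sqrt{\log(p-k)}+\sqrt{\log p}) = O(1/\sqrt{\log p})$ since $k/p\le 0.99$ bounds $\log(1-k/p)$, so the two leading terms differ by $O(\sqrt{n/k}/\sqrt{\log p})=o(1)$. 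Both constants $\log\frac{k}{p-k}$ and $\log\frac{n}{2p\log p}$ are $O(\log\log p)$ under the Assumption, as is their difference. Collecting, $\log I_c(\gamma)$ equals the asserted bracket up to an additive $O(\log\log p)$, and since that bracket is of order at least $\log^{0.005}p$, the additive error amounts to a $(1+o(1))$ multiplicative factor; the same argument handles $\log J_c(\gamma)$.

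The main obstacle is not any single calculation but the uniform bookkeeping of error orders in $\gamma$: I must confirm that the leading term dominates the constant, the rounding, and the $\log(p-k)$-vs-$\log p$ corrections \emph{simultaneously} across the admissible range of $\gamma$ in each case. This is precisely where the lower bounds on $c+\gamma$ (or $c-\gamma$) and the two-sided control $c_4 n \le k \le c_5 n\log^{0.99}p$ of Assumption~\ref{ass:working} are essential—were $c+\gamma$ allowed to approach $0$, or $k$ allowed to reach order $n\log p$, the leading term could drop to order $\log\log p$ and the $(1+o(1))$ claim would break. A secondary point demanding care is the boundary flagged by the $c'$ hypothesis, which guarantees $I_c(\gamma)$ stays strictly below $k$ so that the closed-form inversion, and hence the logarithmic asymptotics, remains valid.
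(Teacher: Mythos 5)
Your proof is correct and follows essentially the same route as the paper's: solve the defining inequality for $\log J_c$ (resp.\ $\log I_c$) by squaring, obtaining the expression $(c\mp\gamma)\sqrt{2n\log(p-k)/k} - (c\mp\gamma)^2n/(2k) + \log\tfrac{k}{p-k}$, sandwich against integer rounding, and absorb the $\log(p-k)$-versus-$\log p$ and constant-term discrepancies into a $(1+o(1))$ factor because the leading term diverges (of order at least $\log^{0.005}p \gg \log\log p$). Your write-up is in fact somewhat more complete than the paper's, which works out only case (a) in detail, labels the reconciliation step ``easy to check,'' and never explicitly verifies the orderings $J_c(\gamma)\ge I_c(\gamma)$ and $I_c(\gamma)\ge J_c(\gamma)$ that your monotone inversion function $f$ delivers for free.
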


A detailed comment on how $I_c(\gamma)$ and $J_c(\gamma)$ are used in proofs is as follows. Under our Assumption \ref{ass:working}, the rank of a variable $\bX_i$ roughly depends on the absolute value of $\bX_i^\top \by$. The random variable $\bX_i^\top \by$, as seen later in the proof of Theorem \ref{thm:upper}, is approximately distributed as $D(\Gamma + \sqrt{k/n} \N(0, 1))$ (note that $\Gamma$ is defined in \eqref{eq:gamma_def}). Intuitively, taking $\gamma = \Gamma$, the first display of Definition \ref{def:only} represents the event that the $I$th true variables along the solution path has an inner product with $\by$ about equal to that of the first false variable (recognize from Lemma \ref{lm:normal_log} that the $I$th largest of $k$ independent $\N(0, 1)$ random variables is about
$\sqrt{2\log(k/I)}$ and the largest of $p-k$ independent $\N(0, 1)$ random variables is about $\sqrt{2\log(p-k)}$). The notation $J_c(\gamma)$ is introduced because $D(\Gamma + \sqrt{k/n} \N(0, 1))$ can also take a large magnitude if $\N(0, 1)$ in the parentheses is about $-\sqrt{2\log(k/J)}$.

Henceforth, $\gamma$ is set to $\Gamma$ and the dependence on the argument is omitted for both $I_c$ and $J_c$. Due to Lemma \ref{lm:gamma}, it is without loss of generality to consider $\Gamma \in (-o(1), 1+ o(1))$ and $c$ fixed and small.


With these preparatory lemmas in place, we are ready to prove Theorem \ref{thm:upper} in the lasso and least angle regression cases. The first part of the proof offers a simple representation of the order statistics of $\bX_S^\top \by$, and this representation serves as useful ingredients in proofs of other results.
\begin{proof}[Proof of Theorem \ref{thm:upper} in the lasso and least angle regression cases]
Without loss of generality, assume that the true support set $S = \{1, \ldots, k\}$. Let $(1), (2), \ldots, (k)$ be a permutation of $1, \ldots, k$ such that
\begin{equation}\label{eq:order}
\bX_{(1)}^\top \by  \ge \bX_{(2)}^\top \by \ge \cdots \ge \bX_{(k)}^\top \by.
\end{equation}
Conditional on $\bz$ and $\bX_S \bbeta_S \equiv \bm\mu$, the $k$ exchangeable random variables $\bX_1^\top\by, \ldots, \bX_k^\top \by$ are jointly normal with means all equal to $\bm\mu^\top \by/(kM)$ and an equicorrelated covariance that has $(k-1)\|\by\|^2/(kn)$ on the diagonal and $-\|\by\|^2/(kn)$ off the diagonal. These can be derived by using properties of the conditional normal distribution: suppose $W_1, \ldots, W_m$ are iid standard normals, then $W_1$ is normally distributed with mean $C/m$ and variance $(m-1)/m$ conditional on $W_1 + \cdots + W_m = C$. Let $\xi$ be normally distributed with mean 0 and variance $\|\by\|^2/(kn)$, and further assume $\xi$ to be independent of the random design $\bX$ and the noise $\bz$. Then, $\bX_1^\top \by  + \xi, \ldots, \bX_k^\top \by + \xi$ are independent normals each with conditional mean $\bm\mu^\top \by/(kM)$ and conditional variance $\|\by\|^2/n$. Thus, conditional on $\bm\mu$ and $\bz$, we see that
\[
\begin{aligned}
\bX_i^\top \by  + \xi &\overset{d}{=} \frac{\bm\mu^\top \by}{kM} + \frac{\|\by\|}{\sqrt{n}} \zeta_i \\
&= D\left( \Gamma +  \sqrt{\frac{k}{n}} \zeta_i \right)
\end{aligned}
\]
for independent standard normals $\zeta_1, \ldots, \zeta_k$. Note that $\bX_1^\top \by  + \xi, \ldots, \bX_k^\top \by + \xi$ keep the same ordering as in \eqref{eq:order}.

Consider the first time that the full lasso (regressing on $\bX$) is just about to include some variable among $\bX_{(I_c+1)}, \bX_{(I_c+2)}, \ldots, \bX_{(k - J_c)}$, where $I_c = I_c(\Gamma)$ and $J_c = J_c(\Gamma)$ as in Definition \ref{def:only}. Call this variable $\bX_{(L)}$ and the penalty at that time $\lambda^\ast$. Denote by $\A$ the event that all the selected variables preceding $\bX_{(L)}$ are true variables. On $\overline\A$ (the complement of $\A$), there are at most $I_c + J_c$ variables selected before the first false variable. Hence, we get
\[
T \le I_c + J_c + 1
\]
on $\overline{\A}$, where $c > 0$ is an arbitrary constant. By Lemma \ref{lm:IJbound}, we get
\[
\begin{aligned}
\log(I_c + J_c + 1) &= \log\max\{I_c, J_c\} + O(1)\\
&=(1+o_{\P}(1))\left[ (c+|\Gamma|)\sqrt{\frac{2n\log p }{k}} - \frac{(c+|\Gamma|)^2n}{2k} + \log\frac{n}{2p \log p}\right] + O(1)\\
&=(1+o_{\P}(1))\left[(c+|\Gamma|)\sqrt{\frac{2n\log p }{k}} - \frac{(c+|\Gamma|)^2n}{2k} + \log\frac{n}{2p \log p} \right]\\
& \le (1+o_{\P}(1))\left[(1 + 3c/2)\sqrt{\frac{2n\log p }{k}} - \frac{(1+3c/2)^2n}{2k} + \log\frac{n}{2p \log p}\right]\\
\end{aligned}
\]
with probability approaching one. Above, the $O(1)$ term is absorbed into $\log\max\{I_c, J_c\}$. Lemma A\ref{lm:gamma} is also used in the inequality following the third equality, with $c/2$ in place of $c$. Since $c > 0$ is arbitrary, we get
\[
\log T \le \log(I_c + J_c + 1)  \le (1+o_{\P}(1))\left[ \sqrt{\frac{2n\log p }{k}} - \frac{n}{2k} + \log\frac{n}{2p \log p}\right].
\]
on $\overline{\A}$.

The rest is devoted to proving $\P(\A) \goto 0$. To proceed, we point out an observation on the lasso: the lasso running on the full design $\bX$ is the same as regressing on $\bX_S$ until the $T$th variable gets selected. In light of this observation, we perform the lasso on the true support $\bX_S$. Since $I_c + J_c$ is much smaller than the bound given by Theorem \ref{thm:unique}, we can assume  no drop-out has happened until $\bX_{(L)}$ arrives and, hence, least angle regression is the same as the lasso. Because $I_c +1 \le L \le k - J_c$, we get
\begin{equation}\label{eq:L_sandwich}
\bX_{(I_c+1)}^\top \by \ge \bX_{(L)}^\top \by \ge \bX_{(k - J_c)}^\top \by.
\end{equation}
By Lemma \ref{lm:normal_log}, the left-hand side of \eqref{eq:L_sandwich} obeys 
\[
\bX_{(I_c+1)}^\top \by  = D \left[\Gamma + \sqrt{\frac{2k\log(k/(I_c+1))}{n}} - (1+o_{\P}(1))\frac{\sqrt{k}\log\log(k/(I_c+1))}{2\sqrt{2n\log(k/(I_c+1))}} - \frac{\xi}{D}\right]
\]
A little analysis reveals that Assumption \ref{ass:working} implies
\[
\frac{\sqrt{k}\log\log(k/(I_c+1))}{2\sqrt{2n\log(k/(I_c+1))}} = o_{\P}(1).
\]
Hence, we get
\[
\begin{aligned}
\bX_{(I_c+1)}^\top \by  &= D \left[\Gamma + \sqrt{\frac{2k\log(k/(I_c+1))}{n}} - o_{\P}(1) - \frac{\xi}{D}\right]  \\
&= D \left[\Gamma + \sqrt{\frac{2k\log(k/(I_c+1))}{n}} + o_{\P}(1)\right]\\
&\le D \left[\sqrt{\frac{2k\log(p-k)}{n}} - c  + o_{\P}(1)\right].
\end{aligned}
\]
Similarly, the right-hand side of \eqref{eq:L_sandwich} obeys
\[
\begin{aligned}
\bX_{(k-J_c)}^\top \by &= D \left[\Gamma - \sqrt{\frac{2k\log(k/(J_c+1))}{n}} + o_{\P}(1) \right]\\
&\ge D \left[-\sqrt{\frac{2k\log(p-k)}{n}} + c + o_{\P}(1)\right]\\
&= -D \left[\sqrt{\frac{2k\log(p-k)}{n}} - c + o_{\P}(1)\right].
\end{aligned}
\]
Thus, from \eqref{eq:L_sandwich} it follows that
\[
D \left[\sqrt{\frac{2k\log(p-k)}{n}} - c + o_{\P}(1)\right] \ge \bX_{(L)}^\top \by \ge -D \left[\sqrt{\frac{2k\log(p-k)}{n}} - c + o_{\P}(1)\right],
\]
yielding
\begin{equation}\label{eq:xbeta_max}
\left|\bX_{(L)}^\top \by \right| \le D \left[\sqrt{\frac{2k\log(p-k)}{n}} - c + o_{\P}(1)\right].
\end{equation}
The KKT conditions of the lasso at $\lambda^\ast$ give
\[
\left|\bX_{(L)}^\top \left( \by - \bX \widehat\bbeta^S \right) \right|= \lambda^\ast.
\]
The equality above together with \eqref{eq:xbeta_max} and Lemma \ref{lm:small_proj}, which gives $\bX_{(L)}^\top\bX \widehat\bbeta^S = o_{\P}(D)$, yields
\begin{equation}\label{eq:L_boundA}
\begin{aligned}
\lambda^\ast = \left| \bX_{(L)}^\top \left( \by - \bX \widehat\bbeta^S \right)\right| &\le D \left[\sqrt{\frac{2k\log(p-k)}{n}} - c + o_{\P}(1) + |\bX_{(L)}^\top \bX \widehat\bbeta^S|/D\right]\\
&= D \left[\sqrt{\frac{2k\log(p-k)}{n}} - c + o_{\P}(1)\right].
\end{aligned}
\end{equation}

Now, we seek a contradiction to get $\P(\A) \goto 0$. Since none of $\bX_{k+1}, \ldots, \bX_{p}$ has been included at $\lambda^\ast$ on the event $\A$, we get
\begin{equation}\label{eq:kkt_none}
\max_{k+1 \le j \le p} \left| \bX_{j}^\top \left( \by - \bX \widehat\bbeta^S \right)\right| \le \lambda^\ast \le D \left[\sqrt{\frac{2k\log(p-k)}{n}} - c + o_{\P}(1)\right].
\end{equation}
Recognizing the independence between all $\bX_{j}, j \ge k+1$ and $\by - \bX \widehat\bbeta^S$, we get
\[
\begin{aligned}
\max_{k+1 \le j \le p} \left| \bX_{j}^\top \left( \by - \bX \widehat\bbeta^S \right)\right| &\ge 
\max_{k+1 \le j \le p} \left| \bX_{j}^\top \by\right|  - \max_{k+1 \le j \le p} \left| \bX_{j}^\top \bX \widehat\bbeta^S\right| \\
&= \max_{k+1 \le j \le p} \left| \bX_{j}^\top \by\right|  - o_{\P}(D).
\end{aligned}
\]
Now we turn to focus on $\max_{k+1 \le j \le p} \sqrt{n} | \bX_{j}^\top \by | /\|\by\|$, which is distributed as the maximum absolute values of $p-k$ iid standard normals. Hence, Lemma \ref{lm:normal_log} gives
\[
\max_{k+1 \le j \le p} \frac{\sqrt{n} | \bX_{j}^\top \by |} {\|\by\|} = \sqrt{2\log (p-k)} - \frac{(0.5+o_{\P}(1))\log\log (p-k)}{\sqrt{2\log (p-k)}}.
\]
Consequently,
\begin{equation}\nonumber
\begin{aligned}
\max_{k+1 \le j \le p} \left| \bX_{j}^\top \left( \by - \bX \widehat\bbeta^S \right)\right| &\ge  \max_{k+1 \le j \le p} \left| \bX_{j}^\top \by\right|  - o_{\P}(D)\\
& = D\sqrt{\frac{k}{n}}\left[\sqrt{2\log (p-k)} - \frac{(0.5+o_{\P}(1))\log\log (p-k)}{\sqrt{2\log (p-k)}}\right] - o_{\P}(D)\\
&= D \sqrt{\frac{2k\log(p-k)}{n}} - D\sqrt{\frac{k}{n}}\frac{(0.5+o_{\P}(1))\log\log (p-k)}{\sqrt{2\log (p-k)}} - o_{\P}(D)
\end{aligned}
\end{equation}
By assumption,
\[
\sqrt{\frac{k}{n}}\frac{\log\log (p-k)}{\sqrt{2\log (p-k)}} \lesssim \sqrt{\log^{0.99} p} \cdot \frac{\log\log (p-k)}{\sqrt{2\log (p-k)}} \asymp \sqrt{\log^{0.99} p} \cdot \frac{\log\log p}{\sqrt{2\log p}} = o(1).
\]
Therefore, we get
\begin{equation}\nonumber
\max_{k+1 \le j \le p} \left| \bX_{j}^\top \left( \by - \bX \widehat\bbeta \right)\right| \ge D \left[\sqrt{\frac{2k\log(p-k)}{n}} + o_{\P}(1) \right],
\end{equation}
contradicting \eqref{eq:kkt_none}. Thus, $\pr(\A) = o(1)$.

\end{proof}

Next, we aim to prove Theorem \ref{thm:upper} in the case of forward stepwise. Below are two preparatory lemmas.
\begin{lemma}\label{lm:small_proj_stepwise}
Given $1 \le m \le p$, for any subset $F \subset \{1, \ldots, p\}$ of cardinality at most $m$, $i \notin F$, and $j \in F$, regress $\bX_i$ on $\bX_F$ and denote by $\widehat\alpha_{j}^{i,F}$ the least-squares coefficient of $\bX_j$ in this fit. Then, under Assumption \ref{ass:working}, with probability approaching one, we have
\[
|\widehat\alpha_j^{i,F}| \le \sqrt{\frac{5m\log p}{n}}
\]
uniformly over all $i, j$, and $F$ of cardinality at most $m$.
\end{lemma}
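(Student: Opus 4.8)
The plan is to exploit the independence between $\bX_i$ and $\bX_F$ guaranteed by $i \notin F$, which turns the least-squares coefficient into a conditionally Gaussian quantity. The regression of $\bX_i$ on $\bX_F$ has coefficient vector $(\bX_F^\top \bX_F)^{-1}\bX_F^\top \bX_i$, so $\widehat\alpha_j^{i,F} = \bm{e}_j^\top (\bX_F^\top \bX_F)^{-1}\bX_F^\top \bX_i$. Conditioning on $\bX_F$ and using $\bX_i \sim \N(\bzero, \tfrac1n \bm{I}_n)$ independently of $\bX_F$, we get $\bX_F^\top \bX_i \mid \bX_F \sim \N(\bzero, \tfrac1n \bX_F^\top \bX_F)$, hence the coefficient vector is $\N(\bzero, \tfrac1n (\bX_F^\top \bX_F)^{-1})$ and its $j$th coordinate $\widehat\alpha_j^{i,F}$ is centered Gaussian with conditional variance $\tfrac1n [(\bX_F^\top \bX_F)^{-1}]_{jj}$.

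The second step is to control this conditional variance uniformly in $F$. Since $[(\bX_F^\top \bX_F)^{-1}]_{jj} \le \lambda_{\max}\big((\bX_F^\top \bX_F)^{-1}\big) = 1/\lambda_{\min}(\bX_F^\top \bX_F)$, it suffices to lower bound the smallest eigenvalue of every Gram matrix $\bX_F^\top \bX_F$ with $|F| \le m$. For a Gaussian design with $\N(0,1/n)$ entries this is exactly the lower restricted isometry bound: provided $m$ lies in the regime where $m\log p \ll n$ (which holds in every application of this lemma, since there $m$ is of the smaller order dictated by Theorem \ref{thm:unique}), one has $\lambda_{\min}(\bX_F^\top \bX_F) \ge 1-\delta$ simultaneously over all such $F$ with probability tending to one, for an arbitrarily small fixed $\delta > 0$. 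On this event the conditional variance is at most $1/(n(1-\delta))$.

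Finally I would combine a Gaussian tail bound with a union bound over admissible triples $(i,j,F)$. Restricting to the restricted-isometry event, the conditional tail gives $\pr\big(|\widehat\alpha_j^{i,F}| > t \mid \bX_F\big) \le 2\exp(-t^2 n (1-\delta)/2)$ for each fixed triple. The number of triples is at most $p \cdot m \cdot \binom{p}{\le m}$, whose logarithm is $(1+o(1))\,m\log p$. Taking $t = \sqrt{5m\log p/n}$ makes the exponent equal to $5(1-\delta)m\log p/2$, which strictly dominates the combinatorial entropy $(1+o(1))\,m\log p$ as soon as $\delta$ is small (any $\delta < 3/5$ works), so the union-bounded probability vanishes. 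The constant $5$ in the statement is precisely the slack needed to beat the entropy of the subsets $F$.

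The only delicate point is the interplay between the two sources of randomness: the variance control is a statement about $\bX_F$, whereas the Gaussian tail is a statement about $\bX_i$ given $\bX_F$. I would handle this by decomposing over the per-subset eigenvalue event $A_F = \{\lambda_{\min}(\bX_F^\top \bX_F) \ge 1-\delta\}$ and integrating the conditional tail bound only over $A_F$, so the tail is invoked exactly where the variance is controlled; summing these contributions and separately bounding $\pr\big(\bigcup_F A_F^{\,c}\big)$ via restricted isometry closes the argument. The genuinely load-bearing ingredient is the uniform restricted-isometry lower bound, but for the small values of $m$ relevant here it is a routine consequence of Gaussian concentration together with a union bound over supports, so I do not expect it to be a real obstacle.
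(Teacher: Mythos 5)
Your proposal is correct and follows essentially the same route as the paper's own proof: the paper likewise observes that, conditional on $\bX_F$, the coefficient vector $(\bX_F^\top\bX_F)^{-1}\bX_F^\top\bX_i$ is $\N(\bzero, (\bX_F^\top\bX_F)^{-1}/n)$, controls the diagonal of the inverse Gram matrix through a uniform eigenvalue (restricted isometry) event over all $|F|\le m$ (the paper fixes the event $\|\bX_F^\top\bX_F-\bI\|\le 1/2$, i.e.\ your $\delta = 1/2$), and then closes with the same Gaussian tail bound unioned over the $p^{m+O(1)}$ admissible triples $(i,j,F)$, with the constant $5$ supplying exactly the slack to beat this entropy. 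The only differences are cosmetic, such as your arbitrarily small $\delta$ in place of the paper's fixed $1/2$.
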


\begin{lemma}\label{lm:chi_sub_expo}
Under Assumption \ref{ass:working}, we get
\[
1 - 3\sqrt{\frac{\log p}{n}} \le \|\bX_i\| \le 1 + 3\sqrt{\frac{\log p}{n}}.
\]
with probability approaching one uniformly for all $1 \le i \le p$.
\end{lemma}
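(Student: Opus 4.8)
The plan is to reduce the statement to a standard chi-squared concentration inequality followed by a union bound over the columns. Under Assumption~\ref{ass:working}, the $i$th column $\bX_i$ has $n$ independent $\N(0,1/n)$ entries, so $\sqrt{n}\,\bX_i$ has independent standard normal coordinates and consequently $n\|\bX_i\|^2$ is distributed as a chi-squared random variable with $n$ degrees of freedom. Thus $\|\bX_i\|^2$ concentrates around its mean $1$, and the lemma is simply the assertion that these fluctuations have magnitude at most of order $\sqrt{(\log p)/n}$, uniformly in $i$.

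First I would record the Laurent--Massart tail bounds for a $\chi^2_n$ variable $Q$: for every $x>0$,
\[
\pr\!\left(Q \ge n + 2\sqrt{nx} + 2x\right) \le \e^{-x}, \qquad \pr\!\left(Q \le n - 2\sqrt{nx}\right) \le \e^{-x}.
\]
Dividing through by $n$ and writing $t := \sqrt{(\log p)/n}$, the choice $x = 2\log p$ gives, for a single column,
\[
\pr\!\left(\|\bX_i\|^2 \ge 1 + 2\sqrt{2}\,t + 4t^2\right) \le p^{-2}, \qquad \pr\!\left(\|\bX_i\|^2 \le 1 - 2\sqrt{2}\,t\right) \le p^{-2}.
\]
It then remains to translate this two-sided control of $\|\bX_i\|^2$ into the claimed bounds on $\|\bX_i\|$ by squaring. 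Since $(1+3t)^2 = 1 + 6t + 9t^2$ and $(1-3t)^2 = 1 - 6t + 9t^2$, it suffices to verify $2\sqrt{2}\,t + 4t^2 \le 6t + 9t^2$ and $2\sqrt{2}\,t \le 6t - 9t^2$, both of which hold once $t$ is below an absolute constant (for instance $t \le 1/3$, using $2\sqrt{2} < 3.17$).

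The final step is a union bound over the $p$ columns, which shows that the two-sided inequality fails for some $i$ with probability at most $2p\cdot p^{-2} = 2/p \to 0$; hence the stated inequalities hold simultaneously for all $1 \le i \le p$ with probability tending to one. The only place where the assumption enters is in guaranteeing the smallness of $t$: from $n \ge c_1 p/\log^{c_2}p$ one gets $t^2 = (\log p)/n \le \log^{1+c_2}p/(c_1 p) \to 0$, so eventually $t \le 1/3$ as required. There is no real obstacle here—the argument is routine concentration—so the only care needed is in fixing the deviation level at a constant multiple of $\log p$ that is large enough for the union bound to vanish yet small enough that the constant $3$ in the statement is not exceeded; the choice $x = 2\log p$ comfortably balances both requirements.
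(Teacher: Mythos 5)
Your proposal is correct and follows essentially the same route as the paper, which also reduces the lemma to a standard chi-squared concentration bound (the paper cites $\P(|\chi^2_n/n-1|\ge t)\le 2\e^{-nt^2/8}$ and omits the details) combined with a union bound over the $p$ columns. Your use of the Laurent--Massart tails in place of that inequality, and your explicit verification that the squared-norm deviations translate into the constant $3$ for the norm itself once $\sqrt{(\log p)/n}\to 0$, simply fills in the routine steps the paper leaves to the reader.
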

This lemma is a simple consequence of a well-known concentration inequality of the form
\[
\P\left(\left|\frac{\chi^2_n}{n} - 1 \right| \ge t \right) \le 2 \e^{-n t^2/8}
\]
for all $0 < t < 1$. The proof is thus omitted. Now we move to prove Theorem \ref{thm:upper} for forward stepwise regression.

\begin{proof}[Proof of Theorem \ref{thm:upper} in the forward stepwise case]
Here, we use the same proof strategy as in the lasso and least angle regression cases. Define $\A$ and $\bX_{(L)}$ as earlier. To complete the proof, it suffices to show that $\P(\A) \goto 0$.

We also assume that forward stepwise is performed on the true support $\bX_S$. Denote by $m^\ast$ the number of variables selected before $\bX_{(L)}$, that is, $\bX_{(L)}$ is selected in the $(m^\ast+1)$th step. Take some $m$ satisfying $m \le p^c$ for some $c < 1/3$. It is easy to show that $m^\ast \ll m$ with probability approaching one.

Denote by $\fsset^\ast$ the set of selected variables before $\bX_{(L)}$. On the event $\A$, the definition of forward stepwise yields
\[
\min_{\supp(\bb) = \fsset^\ast \cup (L)}\| \by - \bX \bb\|^2 \le \min_{\supp(\bb) = \fsset^\ast \cup l, l \in \overline{S}}\| \by - \bX \bb\|^2.
\]
Recall that $\overline{S} = \{k+1, \ldots, p\}$ stands for the complement of $S$. In particular, we have
\begin{equation}\label{eq:more_descent}
\min_{\supp(\bb) = \fsset^\ast \cup (L)}\| \by - \bX \bb\|^2 \le \min_{\supp(\bb) = \fsset^\ast \cup j'}\| \by - \bX \bb\|^2,
\end{equation}
where $j' = \argmax_{j \in \overline{S}} |\bX_j^\top \by|$. Let $\widetilde\bX_{(L)}$ be residual vector by regressing $\bX_{(L)}$ on $\bX_{F^\ast}$. Then,
\[
\begin{aligned}
&\min_{\supp(\bb) = \fsset^\ast \cup (L)}\| \by - \bX \bb\|^2 - \min_{\supp(\bb) = \fsset^\ast}\| \by - \bX \bb\|^2\\
&= - \left[\frac{\widetilde\bX_{(L)}^\top \by}{\|\widetilde\bX_{(L)}\|}\right]^2\\
&= - \left[\frac{(\bX_{(L)}^\top - \sum_{i \in F^\ast}\alpha_i \bX_i^\top)  \by}{\|\widetilde\bX_{(L)}\|}\right]^2,
\end{aligned}
\]
where $\alpha_i$'s are the least-squares coefficients. By Lemma \ref{lm:small_proj_stepwise}, we get
\[
\begin{aligned}
\left\|\sum_{i \in F^\ast}\alpha_i \bX_i^\top \by \right\| &\le \sqrt{m^\ast}\max_{i \in F^\ast} \|\alpha_i X_{i}^\top \by\|\\
&\le \sqrt{m}\max_{i \in F^\ast} \|\alpha_i X_{i}^\top \by\|\\
&\le \sqrt{m} \sqrt{\frac{5m\log p}{n}} \max_{i \in F^\ast} \|X_{i}^\top \by\|\\
&\le \sqrt{\frac{5m^2\log p}{n}} \cdot 2D \sqrt{\frac{2k\log p}{n}}\\
&= D\sqrt{\frac{40 m^2 k\log^2 p}{n^2}}\\
& \lesssim D\sqrt{\frac{40m^2\log^{2.99} p}{n}}\\
& = o(D),
\end{aligned}
\]
which makes use of Lemma \ref{lm:max_corr}. Therefore, we get
\[
\left( \bX_{(L)}^\top - \sum_{i \in F^\ast}\alpha_i \bX_i^\top \right)  \by = \bX_{(L)}^\top \by + o_{\P}(D).
\]
Together with Lemma \ref{lm:chi_sub_expo}, the equality above gives
\begin{equation}\label{eq:gap1}
\begin{aligned}
\left[\frac{(\bX_{(L)}^\top - \sum_{i \in F^\ast}\alpha_i \bX_i^\top)  \by}{\|\widetilde\bX_{(L)}\|}\right]^2 &\le \left[\frac{\bX_{(L)}^\top \by + o_{\P}(D)}{\|\bX_{(L)}\| - \sum_{i \in F^\ast}|\alpha_i| \|\bX_i\|}\right]^2\\
&\le \left[\frac{\bX_{(L)}^\top \by + o_{\P}(D)}{1 - 3\sqrt{\frac{\log p}{n}} - m\sqrt{\frac{5m\log p}{n}} \left(1 + 3\sqrt{\frac{\log p}{n}} \right)}\right]^2\\
&= (1 + O(\sqrt{m^3 n^{-1} \log p}))\left(\bX_{(L)}^\top \by + o_{\P}(D) \right)^2.
\end{aligned}
\end{equation}
Similarly, we have
\begin{equation}\label{eq:gap2}
\left[\frac{(\bX_{(L)}^\top - \sum_{i \in F^\ast}\alpha_i \bX_i^\top)  \by}{\|\widetilde\bX_{(L)}\|}\right]^2 \ge (1 - O(\sqrt{m^3 n^{-1} \log p}))\left(\bX_{(L)}^\top \by + o_{\P}(D) \right)^2.
\end{equation}
For the right-hand side of \eqref{eq:more_descent}, we have
\begin{multline}\label{eq:out_highcorr}
-(1 + O(\sqrt{m^3 n^{-1} \log p}))\left(\bX_{j'}^\top \by + o_{\P}(D) \right)^2 \\
\le \min_{\supp(\bb) = \fsset^\ast \cup j'}\| \by - \bX \bb\|^2 - \min_{\supp(\bb) = \fsset^\ast}\| \by - \bX \bb\|^2 \\
\le -(1 - O(\sqrt{m^3 n^{-1} \log p}))\left(\bX_{j'}^\top \by + o_{\P}(D) \right)^2
\end{multline}
with probability approaching one. Combining \eqref{eq:more_descent}, \eqref{eq:gap1}, \eqref{eq:gap2}, and \eqref{eq:out_highcorr} gives
\begin{equation}\label{eq:fs_rule}
(1 + O(\sqrt{m^3 n^{-1} \log p}))\left(\bX_{(L)}^\top \by + o_{\P}(D) \right)^2 \ge (1 - O(\sqrt{m^3 n^{-1} \log p}))\left(\bX_{j'}^\top \by + o_{\P}(D) \right)^2
\end{equation}
on the event $\A$.

To complete the proof, we shall show that \eqref{eq:fs_rule} holds with probability tending to zero. On the one hand, from the earlier proof for the case of the lasso, we see that
\[
\begin{aligned}
(1 + O(\sqrt{m^3 n^{-1} \log p}))\left(\bX_{(L)}^\top \by + o_{\P}(D) \right)^2  &\le (1 + O(\sqrt{m^3 n^{-1} \log p})) D^2 \left[\sqrt{\frac{2k\log(p-k)}{n}} - c + o_{\P}(1)\right]^2\\
&= D^2 \left[\sqrt{\frac{2k\log(p-k)}{n}} - c + o_{\P}(1)\right]^2,
\end{aligned}
\]
which makes use of the fact that $O(\sqrt{m^3 n^{-1} \log p})) \cdot \sqrt{2k\log(p-k)/n} = o(1)$ under our assumptions. On the other hand,
\[
\begin{aligned}
(1 - O(\sqrt{m^3 n^{-1} \log p}))\left(\bX_{j'}^\top \by + o_{\P}(D) \right)^2 &= (1 - O(\sqrt{m^3 n^{-1} \log p})) D^2 \left[\sqrt{\frac{2k\log(p-k)}{n}}  + o_{\P}(1)\right]^2\\
&= D^2 \left[\sqrt{\frac{2k\log(p-k)}{n}}  + o_{\P}(1)\right]^2
\end{aligned}
\]
Hence, \eqref{eq:fs_rule} yields
\[
D^2 \left[\sqrt{\frac{2k\log(p-k)}{n}} - c + o_{\P}(1)\right]^2 \ge D^2 \left[\sqrt{\frac{2k\log(p-k)}{n}}  + o_{\P}(1)\right]^2,
\]
which is a clear contradiction. Hence, $\P(\A) \goto 0$.

\end{proof}


To conclude this section, we present proofs of Lemma \ref{lm:gamma} through Lemma \ref{lm:small_proj_stepwise}, respectively.

\begin{proof}[Proof of Lemma \ref{lm:gamma}]

Recall the notation $\bm \mu = \bX\bbeta$. We first list some facts that are constantly used in the proof. First, 
\[
\|\bX \bbeta\| = (1 + o_{\P}(1))M\sqrt{k}.
\]
This is because $n\|\bX \bbeta\|^2/(kM^2)$ is just a $\chi^2$ random variable with $n$ degrees of freedom. Thus, $n\|\bX \bbeta\|^2/(kM^2) = (1 + o_{\P}(1))n$, yielding $\|\bX \bbeta\| = (1+o_{\P}(1)) M\sqrt{k}$. Hence, we get
\[
\Gamma = \frac{\bbeta^\top \bX^\top\by}{\sqrt{k}M\|\by\|}  \le \frac{\|\bbeta^\top \bX^\top\| \|\by\|}{\sqrt{k}M\|\by\|} = \frac{\|\bX \bbeta\|}{\sqrt{k}M} \le 1 + o_{\P}(1).
\]

Next we turn to prove that $\Gamma \ge -c$.
In fact, we have
\begin{equation}\label{eq:gamma_larger}
\Gamma \equiv \frac{\|\bm\mu\|^2 + \bm\mu^\top \bz}{\sqrt{k}M\|\by\|} \ge \frac{\bm\mu^\top \bz}{\sqrt{k}M\|\by\|}.
\end{equation}
For $\|\by\|$, note that
\[
\|\by\| = (1 + o_{\P}(1)) \sqrt{kM^2 + n\sigma^2}  \ge (1 + o_{\P}(1)) \sigma\sqrt{n}.
\]
And conditional on $\bm\mu$, the numerator $\bm\mu^\top \bz$ is normally distributed with mean 0 and variance $\sigma^2 \|\bm\mu\|^2$, implying
\[
\bm\mu^\top \bz = O_{\P}(\sigma \|\bm\mu\|) = O_{\P}(M \sigma \sqrt{k}).
\]
Combining the results above, particularly \eqref{eq:gamma_larger}, gives
\[
\Gamma \ge (1 + o_{\P}(1))\frac{O_{\P}(M \sigma \sqrt{k})}{\sqrt{k}M\sigma\sqrt{n}} = O_{\P}(1/\sqrt{n}) = o_{\P}(1).
\]
Hence, $\Gamma \ge -c$ with probability approaching one for an arbitrary constant $c > 0$.
\end{proof}

\begin{proof}[Proof of Lemma \ref{lm:max_corr}]
If $\beta_j = 0$, then $\bX_j$ is independent of $\by$. This means conditional on $\by$ the distribution of $\bX_j ^\top \by$ is $\N(0, \|\by\|^2/n)$. Thus,
\[
|\bX_j ^\top \by| \le \frac{\|\by\|}{\sqrt{n}} \sqrt{2\log p} = D\sqrt{\frac{2k \log p}{n}}
\]
with probability at least $1 - o(1/p)$. Taking a union bound yields
\[
\max_{j \notin S} |\bX_j ^\top \by| \le  D\sqrt{\frac{2k \log p}{n}} < 2D\sqrt{\frac{2k \log p}{n}}
\]
with probability $1 - o(1)$. Now consider a $j$ such that $\beta_j \ne 0$. Note that
\begin{equation}\label{eq:xy_two}
\bX_j ^\top \by = \bX_j ^\top \bX_j \beta_j + \bX_j ^\top (\bX_{-j}\bbeta_{-j} + \bz) = M\bX_j ^\top \bX_j + \bX_j ^\top (\bX_{-j}\bbeta_{-j} + \bz).
\end{equation}
The first term obeys
\[
\P\left( |\bX_j ^\top \bX_j - 1| \ge t \right) = \P\left( |\chi^2_{n}/n- 1| \ge t \right) \le 2 \e^{-n t^2/8}
\]
for any $0 < t < 1$. Setting $t$ to $n^{-\frac14}$ gives
\[
\max_j |\bX_j ^\top \bX_j - 1| = o_{\P}(1).
\] 
In addition, we have
\[
\begin{aligned}
D\sqrt{\frac{2k \log p}{n}} &= (1 + o_{\P}(1)) \sqrt{\frac{k M^2 + n \sigma^2}{k}} \sqrt{\frac{2k \log p}{n}} \\
&\ge (1 + o_{\P}(1)) \sqrt{\frac{k M^2}{k}} \sqrt{\frac{2k \log p}{n}}\\
&\ge (1 + o_{\P}(1)) M \sqrt{\frac{2k \log p}{n}}.\\
\end{aligned}
\]
Recognizing that $\sqrt{2k (\log p)/n} \goto \infty$, we get
\begin{equation}\label{eq:first_small}
\left| M\bX_j ^\top \bX_j \right| = (1 + o_{\P}(1))M = o_{\P}(1) \cdot D\sqrt{\frac{2k \log p}{n}}.
\end{equation}
For the second term of \eqref{eq:xy_two}, note that $\bX_j ^\top (\bX_{-j}\bbeta_{-j} + \bz)$ is distributed as a centered normal random variable with variance $\|\bX_{-j}\bbeta_{-j} + \bz\|^2/n$ conditional on $\bX_{-j}\bbeta_{-j} + \bz$, due to the independence between $\bX_j$ and $\bX_{-j}\bbeta_{-j} + \bz$. Consequently, $|\bX_j ^\top (\bX_{-j}\bbeta_{-j} + \bz)| \le \|\bX_{-j}\bbeta_{-j} + \bz\| n^{-1/2} \sqrt{2\log p}$ holds with probability $1 - o(1/p)$, from which we get
\[
\max_j |\bX_j ^\top (\bX_{-j}\bbeta_{-j} + \bz)| \le (1+o_{\P}(1))\frac{\|\bX_{-j}\bbeta_{-j} + \bz\|}{\sqrt n} \sqrt{2\log p} = (1 + o_{\P}(1))D\sqrt{\frac{2k \log p}{n}},
\]
which makes use of the fact that $\|\bX_{-j}\bbeta_{-j} + \bz\| = (1 + o_{\P}(1))\sqrt{k} D$ uniformly for all $j$. Plugging the last display and \eqref{eq:first_small} into \eqref{eq:xy_two} gives
\[
\max_{j \in S} |\bX_j ^\top \by| \le o_{\P}(1) D\sqrt{\frac{2k \log p}{n}} + (1 + o_{\P}(1))D\sqrt{\frac{2k \log p}{n}},
\]
which is smaller than $2D\sqrt{2k (\log p)/n}$ with probability tending to one. 
\end{proof}

\begin{proof}[Proof of Lemma \ref{lm:small_proj}]
First, we consider the case where $\|\widehat\bbeta\|_0 \ge  0.25\sqrt{n/\log p}$. Since
\[
\begin{aligned}
\frac{\sqrt{k} \|\widehat{\bbeta}^S\|_0 \log p}{n} \cdot D &\ge \frac{\sqrt{k} \times \sqrt{n/\log p}/4 \times \log p}{n} \cdot D\\
&= \frac14\sqrt{\frac{k\log p}{n}} \cdot D
\end{aligned}
\]
(we suppress the dependence of $\widehat{\bbeta}^S$ on $\lambda$) and
\[
\begin{aligned}
\left| \bX_j^\top \bX \widehat\bbeta^S \right| &\le \left| \bX_j^\top \by \right| + \left| \bX_j^\top (\by - \bX \widehat\bbeta^S) \right|\\
&\le 2\max_i \left| \bX^\top_i \by \right| \\
& \le 4D \sqrt{\frac{2k\log p}{n}}
\end{aligned}
\]
for all $j$, which makes use of Lemma \ref{lm:max_corr} (this also follows from \eqref{eq:lambda_rev} below). In this case, the proof is simply as follows:
\begin{equation}\nonumber
\begin{aligned}
\max_{j:\widehat\beta_j^S(\lambda) = 0} \left| \bX_j^\top \bX \widehat\bbeta^S(\lambda)\right| &\le \max_{1 \le j \le p} \left| \bX_j^\top \bX \widehat\bbeta^S(\lambda)\right|\\
& \le 4D \sqrt{\frac{2k\log p}{n}} \\
& = 16\sqrt{2} \cdot \frac14\sqrt{\frac{k\log p}{n}} \cdot D\\
&\le 16\sqrt{2} \cdot \frac{\sqrt{k} \|\widehat{\bbeta}^S\|_0 \log p}{n} \cdot D.
\end{aligned}
\end{equation}

Now we move to the case where $\|\widehat\bbeta\|_0 \le \sqrt{n/\log p}/4$. Recognizing that the number of pairs $1 \le i< j \le p$ is $p(p-1)/2$, we get
\[
\max_{1 \le i < j \le p} |\bX_j^\top \bX_i | \le (1 + o_{\P}(1)) \sqrt{\frac{2\log \frac{p(p-1)}{2}}{n}} = (1 + o_{\P}(1)) \sqrt{\frac{4\log p}{n}}
\]
under Assumption \ref{ass:working}. This result implies
\begin{equation}\label{eq:inter_p_b}
\begin{aligned}
\left| \bX_j^\top \bX \widehat\bbeta^S\right| &\le \sum_{i} |\widehat\beta^S_i| |\bX_i^\top \bX_j | \\
&=\sum_{i: \widehat\beta^S_i \ne 0} |\widehat\beta^S_i| |\bX_i^\top \bX_j| \\
&\le (1+o_{\P}(1))\sum_{i: \widehat\beta_i^S \ne 0} |\widehat\beta_i^S| \sqrt{\frac{4\log p}{n}} \\
&\le (1+o_{\P}(1)) \|\widehat\bbeta^S\|_0 \max_i |\widehat\beta^S_i| \sqrt{\frac{4\log p}{n}}
\end{aligned}
\end{equation}
for all $j$ such that $\widehat\beta^S_j = 0$. Let $j^\star$ be the index that $|\widehat\beta_{j^\star}^S|$ is the largest. Taking
\begin{equation}\label{eq:small_beta_val}
|\widehat\beta^S_{j^\star}| \le (8 + o_{\P}(1))D \sqrt{\frac{2k \log p}{n}}
\end{equation}
as given for the moment, from \eqref{eq:inter_p_b} we get
\begin{equation}\nonumber
\begin{aligned}
\left| \bX_j^\top \bX \widehat\bbeta^S\right|  &\le (1+o_{\P}(1)) \|\widehat\bbeta^S\|_0 \max_{i} |\widehat\beta_i^S| \sqrt{\frac{4\log p}{n}}\\
&\le (1+o_{\P}(1)) \|\widehat\bbeta^S\|_0 \times 8D \sqrt{\frac{2k \log p}{n}} \times \sqrt{\frac{4\log p}{n}}\\
& = (1+o_{\P}(1)) 16\sqrt{2} \cdot \frac{\sqrt{k} \|\widehat\bbeta^S\|_0 \log p}{n} \cdot D\\
\end{aligned}
\end{equation}
Summarizing from the two cases, the lemma holds if we take any constant $C > 16\sqrt{2}$. 

The rest of the proof aims to verify \eqref{eq:small_beta_val}. Note that, on the one hand,
\begin{equation}\label{eq:lambda_rev}
\begin{aligned}
\lambda &= \left| \bX_{j^\star}^\top (\by - \bX \widehat\bbeta^S)\right| \\
&\le \lambda_{\max} := \max_{1 \le j \le p} |\bX^\top_j \by| \\
& \le 2D \sqrt{\frac{2k \log p}{n}}.
\end{aligned}
\end{equation}
On the other hand, we have
\[
\begin{aligned}
\left| \bX_{j^\star}^\top (\by - \bX \widehat\bbeta^S)\right| & \ge 
\left| \bX_{j^\star}^\top \bX \widehat\bbeta^S \right| - \left| \bX_{j^\star}^\top \by\right|\\
&\ge \left| \bX_{j^\star}^\top \bX \widehat\bbeta^S \right| - \max_{1 \le j \le p} |\bX^\top_j \by|\\
&\ge \|\bX_{j^\star}\|^2|\widehat\beta_{j^\star}^S| - \sum_{i\ne j^\star, \widehat\beta^S_i \ne 0} |\widehat\beta_i^S| |\bX_i^\top \bX_{j^\star}| - \max_{1\le j \le p} |\bX^\top_j \by|\\
&\ge \|\bX_{j^\star}\|^2|\widehat\beta_{j^\star}^S| - \sum_{i \ne j^\star, \widehat\beta_i^S \ne 0} (1 + o_{\P}(1))\sqrt{\frac{4\log p}{n}}  |\widehat\beta_i^S|- \max_{1 \le j \le p} |\bX^\top_j \by|\\
&\ge \|\bX_{j^\star}\|^2|\widehat\beta_{j^\star}^S| - \sum_{i \ne j^\star, \widehat\beta_i^S \ne 0} (1 + o_{\P}(1))\sqrt{\frac{4\log p}{n}}  |\widehat\beta_{j^\star}^S|- \max_{1 \le j \le p} |\bX^\top_j \by|\\
&\ge \left[ \|\bX_{j^\star}\|^2 - (1 + o_{\P}(1))\sqrt{\frac{4\log p}{n}} \|\widehat\bbeta^S\|_0  \right] |\widehat\beta^S_{j^\star}|- \max_{1 \le j \le p} |\bX^\top_j \by|\\
&\ge \left( \frac12 + o_{\P}(1) \right) |\widehat\beta^S_{j^\star}|- \max_{1 \le j \le p} |\bX^\top_j \by|
\end{aligned}
\]
Hence,
\[
\left( \frac12 + o_{\P}(1) \right) |\widehat\beta^S_{j^\star}| \le 2D \sqrt{\frac{2k \log p}{n}} + \max_{1 \le j \le p} |\bX^\top_j \by| \le 4D \sqrt{\frac{2k \log p}{n}}
\]
with probability tending to one. We see that \eqref{eq:small_beta_val} is an immediate consequence.
\end{proof}

\begin{proof}[Proof of Lemma \ref{lm:IJbound}]
For the first case, by definition we have
\[
-\gamma + \sqrt{\frac{2k\log(k/J)}{n}} > \sqrt{\frac{2k \log(p-k)}{n}} - c
\]
and
\[
-\gamma + \sqrt{\frac{2k\log(k/(J+1))}{n}} \le \sqrt{\frac{2k \log(p-k)}{n}} - c,
\]
where we write $J$ for $J_c(\gamma)$. These two inequalities are equivalent to
\begin{equation}\label{eq:eqv_j}
\begin{aligned}
&\log J_c(\gamma) < (c-\gamma)\sqrt{\frac{2n\log(p-k)}{k}} - \frac{(c-\gamma)^2n}{2k} + \log\frac{k}{p-k}\\
&\log(J_c(\gamma)+1) \ge (c-\gamma)\sqrt{\frac{2n\log(p-k)}{k}} - \frac{(c-\gamma)^2n}{2k} + \log\frac{k}{p-k}.
\end{aligned}
\end{equation}
Under Assumption \ref{ass:working}, it is easy to check that
\[
\begin{aligned}
&(c-\gamma)\sqrt{\frac{2n\log(p-k)}{k}} - \frac{(c-\gamma)^2n}{2k} + \log\frac{k}{p-k} \\
&= (1 + o(1))\left[ (c-\gamma)\sqrt{\frac{2n\log p}{k}} - \frac{(c-\gamma)^2n}{2k} + \log\frac{n}{2p\log p} \right]\\
& \goto \infty.
\end{aligned}
\]
Thus, from \eqref{eq:eqv_j} we get
\[
\log J_c(\gamma) = (1 + o(1))\left[ (c-\gamma)\sqrt{\frac{2n\log p}{k}} - \frac{(c-\gamma)^2n}{2k} + \log\frac{n}{2p\log p} \right].
\]
The rest two cases follow from similar reasoning and, thus, their proofs are omitted.

\end{proof}

\begin{proof}[Proof of Lemma \ref{lm:small_proj_stepwise}]

Here, the least-squares estimate is
\[
\widehat{\bm\alpha}^{i,F} = (\bX_F^\top \bX_F)^{-1}\bX_F^\top \bX_i,
\]
which, conditional on $\bX_F$, is distributed as
\[
\N(\bzero, (\bX_F^\top \bX_F)^{-1}/n).
\]
Denote by $\A_m$ the event that
\[
\max_{|F| \le m} \|\bX_F^\top \bX_F - \bI\| \le \frac12,
\]
where $\|\cdot\|$ denotes the matrix spectral norm. Note that we have
\[
\begin{aligned}
\P\left(\max |\widehat\alpha^{i,F}_j| \ge \sqrt{5m(\log p)/n}  \right) &\le \P(\overline\A_m) + \P\left(\max |\widehat\alpha^{i,F}_j| \ge \sqrt{5m(\log p)/n}, \A_m \right)\\
&\le \P(\overline\A_m) + \sum_{i \notin F, j\in F, |F| \le m}\P\left(|\widehat\alpha^{i,F}_j| \ge \sqrt{5m(\log p)/n}, \A_m \right)\\
&\le \P(\overline\A_m) + \sum_{i \notin F, j\in F, |F| \le m}\P\left(|\widehat\alpha^{i,F}_j| \ge \sqrt{5m(\log p)/n}, \|\bX_F^\top \bX_F - \bI\| \le \frac12\right)\\
&\le \P(\overline\A_m) + \sum_{i \notin F, j\in F, |F| \le m}\P\left(|\widehat\alpha^{i,F}_j| \ge \sqrt{5m(\log p)/n} \Big| \|\bX_F^\top \bX_F - \bI\| \le \frac12\right),
\end{aligned}
\]
where the max operator is taken over all triples $(i, j, F)$ such that $|F| \le m, i \notin F$, and $j \in F$. Given $\|\bX_F^\top \bX_F - \bI\| \le 1/2$, all the eigenvalues of $(\bX_F^\top \bX_F)^{-1}$ are upper bounded by 2. Since every diagonal element of a square matrix is lower bounded by the minimum eigenvalue of the matrix, all the diagonal elements of $(\bX_F^\top \bX_F)^{-1}$ are no greater than $2$. As a result,
\[
\P\left(|\widehat\alpha^{i,F}_j| \ge \sqrt{5m(\log p)/n} \Big| \|\bX_F^\top \bX_F - \bI\| \le \frac12\right) \le \P\left( |\mathcal{N}(0,1)| \ge \frac{\sqrt{5m(\log p)/n}}{\sqrt{2/n}} \right) = 2\Phi(-\sqrt{2.5m \log p}).
\]
Hence,
\[
\begin{aligned}
\sum_{i \notin F, j\in F, |F| \le m}\P\left(|\widehat\alpha^{i,F}_j| \ge \sqrt{5m(\log p)/n} \Big| \|\bX_F^\top \bX_F - \bI\| \le \frac12\right) &\le  \sum_{i \notin F, j\in F, |F| \le m} 2\Phi(-\sqrt{2.5m \log p})\\
& \le  p m \cdot \frac{p - m + 1}{p - 2m + 1} {p\choose m} \cdot 2\Phi(-\sqrt{2.5m \log p})\\
& \le  p m \cdot \frac{p - m + 1}{p - 2m + 1} p^m \cdot 2\Phi(-\sqrt{2.5m \log p})\\
&\le p^{m + O(1)}\Phi(-\sqrt{2.5m \log p})\\
&\le p^{m + O(1)}\frac1{\sqrt{2.5m \log p}} \e^{-1.25m\log p}\\
&= \frac1{\sqrt{2.5m \log p}} \e^{-(0.25-o(1)) m\log p}\\
&\goto 0.
\end{aligned}
\]
Together with $\P(\overline \A_m) \goto 0$, which follows from the proof of Theorem \ref{thm:unique}, the last display gives
\[
\P\left(\max |\widehat\alpha^{i,F}_j| \ge \sqrt{5m(\log p)/n}  \right) \goto 0.
\]
To see why $\P(\overline \A_m) \goto 0$, recognize that the term on the right-hand side of \eqref{eq:small_rip_event} is greater than $1/2$ under Assumption \ref{ass:working}.
\end{proof}


\subsection{Theorem \ref{thm:lower}}
\label{sec:proof-theor-refthm:l}
As earlier in Section \ref{sec:proof-theor-refthm:l-1}, we first state some lemmas before turning to the proof of Theorem \ref{thm:lower}.

\begin{lemma}\label{lm:dicho}
Fit the response $\by$ on the true support $\bX_S$ using the lasso and denote by $\widehat{\bbeta}^S(\lambda)$ the lasso solution. 
Under Assumption \ref{ass:working}, further assume that $0.5 < \Gamma < 1.1$ with probability tending to one. Then, for an arbitrary constant $c > 0$, with probability approaching one, all the variables $\bX_{(1)}, \bX_{(2)}, \ldots, \bX_{(I_{-c})}$ enter the model before any of 
\[
\bX_{(I_c+1)}, \bX_{(I_c+2)}, \ldots, \bX_{(k)}
\]
along the lasso path. 
\end{lemma}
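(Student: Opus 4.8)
The plan is to reduce the entry-order claim to a \emph{separation in the raw inner products} $\bX_{(i)}^\top\by$ and then transfer that separation to the lasso path via the KKT conditions together with the projection bound of Lemma~\ref{lm:small_proj}. Concretely, I would let $\lambda^\ast$ denote the largest penalty at which one of $\bX_{(I_c+1)},\ldots,\bX_{(k)}$ is about to enter the active set, and argue by contradiction that at $\lambda^\ast$ every one of $\bX_{(1)},\ldots,\bX_{(I_{-c})}$ has already been selected. Throughout I would work on the intersection of the probability-tending-to-one events of Lemmas~\ref{lm:normal_log} and \ref{lm:small_proj} and of Theorem~\ref{thm:unique}, the latter guaranteeing that no drop-out has occurred in this early range (so least angle regression agrees with the lasso and the active set only grows), together with the standing hypothesis $0.5<\Gamma<1.1$.

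First I would establish the magnitude separation. Using the distributional representation $\bX_{(i)}^\top\by+\xi \overset{d}{=} D(\Gamma+\sqrt{k/n}\,\zeta_{(i)})$ from the proof of Theorem~\ref{thm:upper}, Lemma~\ref{lm:normal_log} applied to the top order statistics yields, via the defining inequalities of $I_{-c}$ and $I_c$ in Definition~\ref{def:only},
\[
\bX_{(I_{-c})}^\top\by \ge D\!\left[\sqrt{2k\log(p-k)/n}+c-o_{\P}(1)\right],\qquad
\bX_{(I_c+1)}^\top\by \le D\!\left[\sqrt{2k\log(p-k)/n}-c+o_{\P}(1)\right].
\]
Since $j\mapsto\bX_{(j)}^\top\by$ is nonincreasing, $\max_{j\ge I_c+1}|\bX_{(j)}^\top\by|$ is attained at an endpoint $j=I_c+1$ or $j=k$; for the lower endpoint I would apply Lemma~\ref{lm:normal_log} to the reversed sequence $-\zeta_{(j)}$ to get $|\bX_{(k)}^\top\by|\le D[\sqrt{2k\log k/n}-\Gamma+o_{\P}(1)]$. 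Here the hypothesis $\Gamma>0.5$ is exactly what is needed: because $\sqrt{2k\log k/n}-\sqrt{2k\log(p-k)/n}=O(1/\sqrt{\log p})=o(1)$ under Assumption~\ref{ass:working}, this forces $|\bX_{(k)}^\top\by|\le D[\sqrt{2k\log(p-k)/n}-c+o_{\P}(1)]$ as well, so both extremes, and hence the whole bottom group, sit below the top group by a gap of order $2cD$.

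Next I would run the sweep in $\lambda$. Before $\lambda^\ast$ only variables of rank at most $I_c$ can be active, so $\|\widehat\bbeta^S(\lambda^\ast)\|_0\le I_c+1$; the asymptotics of Lemma~\ref{lm:IJbound}(b) give $\log I_c=O(\sqrt{\log p})=o(\log p)$, whence $\sqrt{k}\,(I_c+1)\log p/n=o(1)$ and Lemma~\ref{lm:small_proj} makes $|\bX_i^\top\bX\widehat\bbeta^S(\lambda^\ast)|$ uniformly $o_{\P}(D)$ over inactive $i$. The KKT condition for the entering bottom variable then forces $\lambda^\ast\le\max_{j\ge I_c+1}|\bX_{(j)}^\top\by|+o_{\P}(D)\le D[\sqrt{2k\log(p-k)/n}-c+o_{\P}(1)]$. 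If some $\bX_{(i)}$ with $i\le I_{-c}$ were still inactive at $\lambda^\ast$, its KKT inequality $|\bX_{(i)}^\top(\by-\bX\widehat\bbeta^S(\lambda^\ast))|\le\lambda^\ast$ combined with the same projection bound and $\bX_{(i)}^\top\by>0$ would give $\lambda^\ast\ge\bX_{(i)}^\top\by-o_{\P}(D)\ge D[\sqrt{2k\log(p-k)/n}+c-o_{\P}(1)]$, contradicting the upper bound on $\lambda^\ast$ since $c>0$ is fixed and $2c\le o_{\P}(1)$ is impossible.

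I expect the main obstacle to be the uniform control of the projection term across the whole initial segment of the path, which requires simultaneously (i) the no-drop-out guarantee of Theorem~\ref{thm:unique} so that the active set stays confined to the top ranks before $\lambda^\ast$, (ii) feeding the bound $\|\widehat\bbeta^S(\lambda)\|_0\le I_c+1$ into Lemma~\ref{lm:small_proj} uniformly in $\lambda$, and (iii) checking that all $o_{\P}(1)$ corrections—most delicately the one attached to the minimum order statistic $\zeta_{(k)}$—stay negligible relative to the fixed gap $2c$ within the sparsity window of Assumption~\ref{ass:working}. The two-sided endpoint comparison, and in particular pinning down why $\Gamma>0.5$ (rather than merely $\Gamma>0$) is what rules out an early entry of the most anticorrelated true variable, is the one genuinely new ingredient beyond the machinery already assembled for Theorem~\ref{thm:upper}.
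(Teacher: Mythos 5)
Your proposal is correct and follows essentially the same route as the paper's own proof: the paper likewise takes $\lambda^\ast$ to be the first time a variable of rank exceeding $I_c$ is about to enter, supposes for contradiction that some $\bX_{(l)}$ with $l \le I_{-c}$ is still inactive, and contradicts the KKT comparison $\left|\bX_{(l)}^\top(\by-\bX\widehat\bbeta^S)\right| \le \lambda^\ast = \left|\bX_{(L)}^\top(\by-\bX\widehat\bbeta^S)\right|$ by combining Lemma~\ref{lm:normal_log} for the order-statistic separation (including the two-sided sandwich down to $\bX_{(k)}$, where the hypothesis $\Gamma>0.5$ is used exactly as you identify) with Lemma~\ref{lm:small_proj} to make the projection terms $o_{\P}(D)$. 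The only inessential differences are that you additionally invoke Theorem~\ref{thm:unique} (not actually needed here, since before $\lambda^\ast$ the active set is confined to ranks at most $I_c$ regardless of drop-outs) and that you spell out the sparsity bound fed into Lemma~\ref{lm:small_proj}, which the paper defers to the proof of Theorem~\ref{thm:lower}.
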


\begin{lemma}\label{lm:gamma_1}
Under Assumption \ref{ass:working}, further assume
\[
\frac{\sigma}{M} \sqrt{\frac{n}{k}} \goto 0.
\]
Then, $\Gamma = 1 + o_{\P}(1)$.
\end{lemma}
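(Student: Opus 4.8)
The plan is to reuse the three estimates established in the proof of Lemma \ref{lm:gamma} and then bring in the extra hypothesis $\frac{\sigma}{M}\sqrt{n/k}\to 0$ to render the noise contributions negligible. Recall the exact representation
\[
\Gamma = \frac{\|\bm\mu\|^2 + \bm\mu^\top\bz}{\sqrt{k}\,M\,\|\by\|}, \qquad \bm\mu = \bX\bbeta.
\]
First I would record that $n\|\bm\mu\|^2/(kM^2)$ is a $\chi^2_n$ variable, so $\|\bm\mu\|^2 = (1+o_{\P}(1))kM^2$, and that, conditionally on $\bm\mu$, the cross term $\bm\mu^\top\bz$ is centered normal with variance $\sigma^2\|\bm\mu\|^2$, whence $\bm\mu^\top\bz = O_{\P}(\sigma\|\bm\mu\|) = O_{\P}(M\sigma\sqrt{k})$.

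Next I would control $\|\by\|$ by expanding $\|\by\|^2 = \|\bm\mu\|^2 + 2\bm\mu^\top\bz + \|\bz\|^2$ and using $\|\bz\|^2 = (1+o_{\P}(1))n\sigma^2$, which gives $\|\by\|^2 = (1+o_{\P}(1))kM^2 + O_{\P}(M\sigma\sqrt{k}) + (1+o_{\P}(1))n\sigma^2$. The hypothesis $\frac{\sigma}{M}\sqrt{n/k}\to 0$ is precisely $n\sigma^2 = o(kM^2)$, which kills the pure-noise term; the cross term is also negligible because $\frac{\sigma\sqrt{k}M}{kM^2}=\frac{\sigma}{M\sqrt{k}}\le \frac{\sigma}{M}\sqrt{n/k}\to 0$ once $n\ge 1$. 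Hence $\|\by\|^2 = (1+o_{\P}(1))kM^2$, that is, $\|\by\| = (1+o_{\P}(1))\sqrt{k}\,M$.

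Finally I would substitute these estimates into $\Gamma$. The numerator equals $kM^2(1+o_{\P}(1)) + O_{\P}(M\sigma\sqrt{k}) = kM^2(1+o_{\P}(1))$ by the same comparison $\frac{\sigma}{M\sqrt{k}}=o(1)$, while the denominator equals $\sqrt{k}\,M\cdot(1+o_{\P}(1))\sqrt{k}\,M = (1+o_{\P}(1))kM^2$. Dividing yields $\Gamma = 1+o_{\P}(1)$, as claimed.

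There is no real obstacle here: the argument is essentially bookkeeping of $o_{\P}$ terms built on the facts already proved for Lemma \ref{lm:gamma}. The only point requiring a little care is checking that both noise-induced quantities---the cross term $\bm\mu^\top\bz$ and the pure-noise term $\|\bz\|^2$---are of smaller order than $kM^2$, which is exactly what the added hypothesis guarantees; in fact the cross term is dominated automatically once $n\ge 1$, so the binding constraint is the control of $n\sigma^2$.
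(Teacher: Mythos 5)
Your proposal is correct, and it gets there by a somewhat different route than the paper. The paper's proof keeps the denominator in the form $\|\by\| = (1+o_{\P}(1))\sqrt{kM^2+n\sigma^2}$, rewrites $\Gamma$ as the inner product of the two asymptotically unit vectors $\bX\bbeta/(\sqrt{k}M)$ and $\by/\sqrt{kM^2+n\sigma^2}$, and then argues geometrically: since $\bX\bbeta$ and $\bz$ are asymptotically orthogonal, the angle between $\bX\bbeta$ and $\by=\bX\bbeta+\bz$ vanishes precisely when $\|\bz\|/\|\bX\bbeta\| \goto 0$, which is exactly what the hypothesis $\sigma\sqrt{n}/(M\sqrt{k})\goto 0$ provides. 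You instead expand $\|\by\|^2 = \|\bm\mu\|^2 + 2\bm\mu^\top\bz + \|\bz\|^2$ and do explicit $o_{\P}$ bookkeeping term by term, concluding that the numerator and the denominator of $\Gamma$ are both $(1+o_{\P}(1))kM^2$. The probabilistic ingredients are identical in the two arguments (the $\chi^2$ concentration for $\|\bm\mu\|^2$ and $\|\bz\|^2$, and the conditional-Gaussian bound $\bm\mu^\top\bz = O_{\P}(M\sigma\sqrt{k})$, all already present in the proof of Lemma \ref{lm:gamma}), so the difference is one of packaging rather than substance. What your version buys is rigor and self-containedness: it makes explicit the step the paper leaves at the level of a geometric equivalence (``vanishing angle iff $\|\bz\|/\|\bX\bbeta\|\goto 0$''), and your remark that the cross term is negligible automatically---so that the binding constraint is $n\sigma^2 = o(kM^2)$---is a correct refinement. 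What the paper's phrasing buys is the conceptual explanation of why $\sigma\sqrt{n}/(M\sqrt{k})$ is precisely the right quantity: it is the noise-to-signal length ratio.
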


With Lemmas \ref{lm:dicho} and \ref{lm:gamma_1} in place, now we give the proof of Theorem \ref{thm:lower} where the procedure is the lasso or least angle regression. Note that for large $\gamma$ and sufficiently small $c$, we have $J_c(\gamma) = 0$.
\begin{proof}[Proof of Theorem \ref{thm:lower} in the lasso and least angle regression cases]
The proof starts by making use of Lemma \ref{lm:gamma_1}, which concludes
\[
1 - c \le \Gamma \le 1 + c
\]
with probability tending to one for any constant $c > 0$. To see this, note that Assumption \ref{ass:working} along ensures $\sqrt{n/k} \le 1/\sqrt{c_4} = O(1)$ and, as a result, we get
\[
\frac{\sigma}{M} \sqrt{\frac{n}{k}} \goto 0
\]
given the condition that $\sigma/M \goto 0$.

As in the proof of Theorem \ref{thm:upper}, here it also suffices to only focus on the lasso case. Consider fitting the lasso on $\bX_S$. Denote by $\bX_{(I^\diamond)}$ the last variable among $\bX_{(1)}, \bX_{(2)}, \ldots, \bX_{(I_{-c})}$ that enters the model and let $\lambda^\diamond$ be the lasso penalty when $\bX_{(I^\diamond)}$ is just about to enter the lasso path. Note that $\widehat\beta^S_{(I^\diamond)}(\lambda^\diamond) = 0$ and $I_{-c} = I_{-c}(\Gamma)$. The KKT conditions give
\begin{equation}\label{eq:kkt_lmb_di}
\left|\bX_{(I^\diamond)}^\top \left( \by - \bX \widehat\bbeta^S \right) \right|= \lambda^\diamond.
\end{equation}
Note that
\begin{equation}\label{eq:d_d_bb}
\begin{aligned}
\bX_{(I^\diamond)}^\top \by &\ge  \bX_{(I_{-c})}^\top \by  = D \left[\Gamma + \sqrt{\frac{2k\log(k/(I_{-c}))}{n}} + o_{\P}(1) - \frac{\xi}{D}\right]\\
&= D \left[\Gamma + \sqrt{\frac{2k\log(k/(I_{-c}))}{n}} + o_{\P}(1)\right]\\
&= D \left[\sqrt{\frac{2k\log(p-k)}{n}} + c + o_{\P}(1) + o_{\P}(1)\right]\\
&= D \left[\sqrt{\frac{2k\log(p-k)}{n}} + c + o_{\P}(1)\right].
\end{aligned}
\end{equation}
From Lemma \ref{lm:dicho} we see that, with probability approaching one, by the time of $\lambda^\diamond$ none of the variables $\bX_{(I_c+1)}, \bX_{(I_c+2)}, \ldots, \bX_{(k)}$ has been included in the lasso path. Hence, the lasso model consists of no more than $I_c$ variables at any time before $\lambda^\diamond$, and it is easy to check that 
\[
\|\lasso^S(\lambda)\|_0 \le I_c(\Gamma) \le o_{\P}\left(\frac{n}{\sqrt{k} \log p} \right)
\]
for all $\lambda > \lambda^\diamond$. Consequently, we get
\[
\frac{C\sqrt{k} \|\lasso^S(\lambda)\|_0 \log p}{n} \goto 0
\]
uniformly for all $\lambda > \lambda^\diamond$. Thus, Lemma \ref{lm:small_proj} shows $|\bX_{(I^\diamond)}^\top\bX \widehat\bbeta^S| = o_{\P}(D)$. Hence, taking \eqref{eq:kkt_lmb_di} and \eqref{eq:d_d_bb} together shows that $\lambda^\diamond$ obeys
\begin{equation}\label{eq:diamond_ine}
\lambda^\diamond \ge D \left[\sqrt{\frac{2k\log(p-k)}{n}} + c + o_{\P}(1)\right].
\end{equation}
(Similarly, given a small $I_c$, Theorem \ref{thm:unique} ensures that by that time no drop-out has ever happened and, consequently, the lasso and least angle regression are equivalent in our discussion.)
If we can show that
\begin{equation}\label{eq:resi_lasso_sm}
\left|\bX_j^\top \left( \by - \bX \widehat\bbeta^S(\lambda) \right) \right| < \lambda
\end{equation}
for all $\lambda > \lambda^\diamond$ and all $j \notin S$, then the full lasso (on $\bX$) selects variables only from $\bX_S$ before time $\lambda^\diamond$. Call this event $\mathcal{B}$. Thus, on $\mathcal{B}$ the first $I_{-c}$ selected variables along the lasso path are all signal variables. As for $I_{-c}$, Lemma \ref{lm:IJbound} yields
\[
\begin{aligned}
\log T &\ge \log (I_{-c}(\Gamma) + 1)\\
&= (1+o_{\P}(1)) \left[ (-c+\Gamma)\sqrt{\frac{2n\log(p-k)}{k}} - \frac{(-c+\Gamma)^2n}{2k} + \log \frac{n}{2p\log p}\right].
\end{aligned}
\]
Provided $1 - c \le \Gamma \le 1 + c$ with probability tending to one, we get
\[
(-c+\Gamma)\sqrt{\frac{2n\log(p-k)}{k}} - \frac{(-c+\Gamma)^2n}{2k} \ge (1 - 2c)\sqrt{\frac{2n\log(p-k)}{k}} - \frac{(1 - 2c)^2n}{2k}.
\]
Hence, we have
\[
\log T \ge (1 + o_{\P}(1))\left[ (1 - 2c )\sqrt{\frac{2n\log(p-k)}{k}} - \frac{(1 - 2c)^2n}{2k} + \log\frac{n}{2p\log p}\right]
\]
for all constant $c > 0$. Setting $c \goto 0+$ gives
\[
\begin{aligned}
\log T &\ge (1 + o_{\P}(1))\left[ \sqrt{\frac{2n\log(p-k)}{k}} - \frac{n}{2k} + \log\frac{n}{2p\log p}\right] \\
&= (1 + o_{\P}(1))\left[ \sqrt{\frac{2n\log p}{k}} - \frac{n}{2k} + \log\frac{n}{2p\log p}\right],
\end{aligned}
\]
as desired.

Establishing \eqref{eq:resi_lasso_sm} is the subject of the remaining proof. That is, prove $\P(\mathcal B) \goto 1$. For $j \notin S$, observe that
\begin{equation}\label{eq:kkt_twoterm}
\left|\bX_j^\top \left( \by - \bX \widehat\bbeta^S\right) \right| \le |\bX_j^\top \by| + |\bX_j^\top \bX \widehat\bbeta^S|.
\end{equation}
Making use of the independence between $\by$ and $\bX_j$, the first term $\bX_j ^\top \by$ obeys
\[
\max_{j \notin S} |\bX_j ^\top \by| \le \|\by\|\sqrt{\frac{2\log(p-k)}{n}}
\]
with probability approaching one. The second term, $|\bX_j^\top \bX \widehat\bbeta^S|$, as shown earlier by Lemma \ref{lm:small_proj}, satisfies that
\[
\max_{j \notin S, \lambda > \lambda^\diamond} |\bX_j^\top \bX \widehat\bbeta^S(\lambda)| \le \frac{C \sqrt{k} \|\widehat\bbeta^S\|_0\log p}{n} \cdot D = o_{\P}(D)
\]
with probability tending to one. Hence, \eqref{eq:kkt_twoterm} yields
\[
\begin{aligned}
\left|\bX_j^\top \left( \by - \bX \widehat\bbeta^S\right) \right| &\le \|\by\|\sqrt{\frac{2\log(p-k)}{n}} + o_{\P}(D) \\
&= D\sqrt{\frac{2k\log(p-k)}{n}} + o_{\P}(D) \\
& = D\left[\sqrt{\frac{2k\log(p-k)}{n}} + o_{\P}(1) \right]\\
& < D \left[\sqrt{\frac{2k\log(p-k)}{n}} + c + o_{\P}(1)\right]\\
& \le \lambda^\diamond\\
& < \lambda,
\end{aligned}
\]
where the second last inequality follows from \eqref{eq:diamond_ine}. Therefore, $\P(\mathcal B) \goto 1$.
\end{proof}

Next, we turn to prove Theorem \ref{thm:lower} in the case of forward stepwise regression.
\begin{lemma}\label{lm:dicho_fs}
Fit the response $\by$ on the true support $\bX_S$ using forward stepwise. Under Assumption \ref{ass:working}, further assume that $0.5 < \Gamma < 1.1$ with probability tending to one. Then, for an arbitrary constant $c > 0$, with probability approaching one, all the variables $\bX_{(1)}, \bX_{(2)}, \ldots, \bX_{(I_{-2c})}$ enter the model before any of 
\[
\bX_{(I_{-c}+1)}, \bX_{(I_{-c}+2)}, \ldots, \bX_{(k)}
\]
by forward stepwise.

\end{lemma}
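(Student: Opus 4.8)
The plan is to adapt the first-passage argument behind the lasso dichotomy (Lemma \ref{lm:dicho}) to the greedy rule of forward stepwise, replacing the KKT characterization by the residual-sum-of-squares reduction criterion already analyzed in the forward stepwise proof of Theorem \ref{thm:upper}. Assuming $S = \{1,\ldots,k\}$, I would first recall the representation $\bX_{(i)}^\top\by + \xi \overset{d}{=} D(\Gamma + \sqrt{k/n}\,\zeta_{(i)})$ with $\zeta_{(1)}\ge\cdots\ge\zeta_{(k)}$ the order statistics of $k$ iid standard normals. Combined with Lemma \ref{lm:normal_log}, this yields, up to negligible $\log\log$ and $\xi/D$ corrections, the quantitative separation
\[
\bX_{(a)}^\top\by \ge D\left[\sqrt{2k\log(p-k)/n} + 2c + o_{\P}(1)\right] \quad (a \le I_{-2c}),
\]
\[
\left|\bX_{(b)}^\top\by\right| \le D\left[\sqrt{2k\log(p-k)/n} + c + o_{\P}(1)\right] \quad (b > I_{-c}).
\]
The bound on the bottom block uses the definition of $I_{-c}$ on the positive side and, for the deeply negative order statistics, the fact that $J_c(\Gamma) = 0$ for small $c$ when $0.5 < \Gamma < 1.1$; this rules out a lower-block variable becoming competitive through a large-magnitude negative inner product (the most negative statistic $\bX_{(k)}^\top\by$ then has absolute value at most $D[\sqrt{2k\log(p-k)/n} - c + o_{\P}(1)]$). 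The upshot is a non-vanishing gap of order $cD$ between the two blocks.

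Next I would run a first-bad-time argument. Suppose, toward a contradiction, that with non-vanishing probability some variable from $\{\bX_{(I_{-c}+1)}, \ldots, \bX_{(k)}\}$ enters while at least one variable from $\{\bX_{(1)}, \ldots, \bX_{(I_{-2c})}\}$ is still absent, and let $\tau$ be the first step at which this happens. Because forward stepwise never drops a variable, the witnessing top variable is absent throughout $[0,\tau)$, so by minimality of $\tau$ no lower-block variable was selected before $\tau$; hence the active set $F$ at step $\tau$ is contained in $\{1,\ldots,I_{-c}\}$. In particular $|F| = m^\ast \le I_{-c}$, and since $\log I_{-c} = (1+o(1))[\sqrt{2n\log p/k} - n/(2k) + \log(n/(2p\log p))]$ by Lemma \ref{lm:IJbound}, we get $m^\ast = p^{o(1)}$.

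I would then invoke the criterion estimates developed for forward stepwise: as in the proof of Theorem \ref{thm:upper}, Lemmas \ref{lm:small_proj_stepwise}, \ref{lm:chi_sub_expo}, and \ref{lm:max_corr} show that the reduction in residual sum of squares from adjoining $\bX_j$ equals $(1 + O(\sqrt{(m^\ast)^3 n^{-1}\log p}))(\bX_j^\top\by + o_{\P}(D))^2$, so the orthogonalized score is $|\bX_j^\top\by + o_{\P}(D)|(1 + o_{\P}(1))$. Since $m^\ast = p^{o(1)}$ and $n = p^{1-o(1)}$ under Assumption \ref{ass:working}, the multiplicative error is $p^{-1/2+o(1)}$ and the additive error is $o_{\P}(D)$, both of smaller order than the constant-in-$D$ gap $cD$ even after multiplication by the polylogarithmic leading magnitude $s = \sqrt{2k\log(p-k)/n}$ (one checks $p^{-1/2+o(1)}\cdot s = o(c)$). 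Hence the score of the still-absent top variable strictly exceeds that of the selected lower-block variable at step $\tau$, contradicting the greedy choice. Therefore the bad event has vanishing probability, which is the assertion.

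The main obstacle I anticipate is precisely this perturbation bookkeeping: the greedy criterion compares inner products whose leading size $s$ diverges, so a merely $o_{\P}(1)$ multiplicative error would be amplified to $o_{\P}(s)$ and could swamp the fixed gap $cD$. The crux is to exploit that the active set stays of size $p^{o(1)}$ while $n$ is polynomially large in $p$, making the orthogonalization corrections polynomially small and thus dominated by the constant gap. A secondary technical point is the verification that $J_c(\Gamma)=0$, so that the deeply negative lower order statistics cannot masquerade as strong signals; this is exactly where the hypothesis $0.5 < \Gamma < 1.1$ is used.
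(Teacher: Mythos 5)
Your proof is correct and takes essentially the same route as the paper's: the paper also localizes to the first entry of a lower-block variable, invokes the residual-sum-of-squares reduction formula $(\bX_j^\top \by + o_{\P}(D))^2$ from the forward stepwise proof of Theorem \ref{thm:upper}, and derives the contradiction from the same order-statistic bounds, using $\Gamma > 0.5$ to control the negative tail exactly as your $J_c(\Gamma) = 0$ observation does. Your explicit bookkeeping of the active-set size $p^{o(1)}$ against the multiplicative orthogonalization error is a more careful rendering of what the paper absorbs into its $o_{\P}(D)$ and $O(\sqrt{m^3 n^{-1}\log p})$ terms, not a genuinely different argument.
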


\begin{proof}[Proof of Theorem \ref{thm:lower} in the forward stepwise case]
Consider running forward stepwise on the design matrix $\bX_S$. Denote by $\bX_{I^\diamond}$ the last variable among $\bX_{(1)}, \ldots, \bX_{(I_{-2c})}$ that gets selected. Let $m^\star - 1$ be the number of variables selected prior to $\bX_{(I^\diamond)}$. From Lemma \ref{lm:dicho_fs} we see that, with probability approaching to one, by then none of $\bX_{(I_{-c}+1)}, \ldots, \bX_{(k)}$ has been selected, that is, $m^\star \le I_{-c}$. The proof would be completed once we show that by then no noise variables would be selected if we perform forward stepwise on $\bX$ instead of $\bX_S$. 

Denote by $F_m$ the set of variables selected in the first $m$ steps. We would like to show that
\begin{equation}\label{eq:fs_on_s}
\min_{\supp(\bb) = \fsset_{m}}\| \by - \bX b\|^2 \le \min_{\supp(\bb) = \fsset_{m-1} \cup l, l \in \overline S}\| \by - \bX b\|^2
\end{equation}
for all $m \le m^\star$. Denote by $\bX_{(j_m)}$ the variable selected in the $m$th step. To this end, note that from the proof of Theorem \ref{thm:upper} for the forward stepwise case, we see that for any fixed $l \in \overline{S}$,
\[
\begin{aligned}
&\min_{\supp(\bb) = \fsset_{m-1} \cup l}\| \by - \bX b\|^2 - \min_{\supp(\bb) = \fsset_m}\| \by - \bX b\|^2 \\
&= (\bX^\top_{(j_m)} \by + o_{\P}(D))^2 - (\bX^\top_l \by + o_{\P}(D))^2.
\end{aligned}
\]
Note that since $l \in \overline{S}$, we have $|\bX^\top_l \by| \le D(\sqrt{2k\log(p-k)/n} + o_{\P}(1))$, and since $1 \le j_m \le I_{-c}$, we get
\[
\bX^\top_{(j_m)} \by \ge \bX^\top_{(I_{-c})} \by = D \left[\sqrt{\frac{2k\log(p-k)}{n}} + c + o_{\P}(1)\right].
\]
An immediate consequence is the following:
\[
\begin{aligned}
&\min_{\supp(\bb) = \fsset_{m-1} \cup l}\| \by - \bX b\|^2 - \min_{\supp(\bb) = \fsset_m}\| \by - \bX b\|^2\\
& \ge D^2 \left[\sqrt{\frac{2k\log(p-k)}{n}} + c + o_{\P}(1)\right]^2 - D^2 \left[\sqrt{\frac{2k\log(p-k)}{n}}  + o_{\P}(1)\right]^2\\
& > 0,
\end{aligned}
\]
which certifies \eqref{eq:fs_on_s} provided the arbitrariness of $l \in \overline S$. Therefore, with probability approaching one, we get
\[
\log T \ge \log(I_{-2c}(\Gamma) + 1).
\]
Setting $c \goto 0+$ completes the proof. Details are the same as the proof for the case of the lasso and least angle regression.

\end{proof}

To conclude this section, we prove Lemmas \ref{lm:dicho}, \ref{lm:gamma_1}, and \ref{lm:dicho_fs}.
\begin{proof}[Proof of Lemma \ref{lm:dicho}]
Consider the first time along the lasso path that a variable among $\bX_{(I_c+1)}, \bX_{(I_c+2)}, \ldots, \bX_{(k )}$ is just about to enter the lasso model and denote by $\bX_{(L)}$ this variable, where $I_c +1 \le L \le k$. Specifically, writing $\lambda^\ast$ for the lasso penalty at this point, we know that
\[
\widehat\beta_{(L)}(\lambda^\ast) = 0
\]
and
\[
\widehat\beta_{(L)}(\lambda) \ne 0
\]
if $\lambda^\ast - c' < \lambda < \lambda^\ast$ for some $c' > 0$. For a proof of this lemma by contradiction, assume that $\bX_{(l)}$ has not yet entered the model at $\lambda^\ast$ for some $l$ satisfying $1 \le l \le I_{-c}$.  Under this assumption, our discussion below considers the lasso solution $\widehat\bbeta$ at $\lambda^\ast$. First of all, the KKT conditions of the lasso give
\[
\left| \bX_{(l)}^\top \left( \by - \bX \widehat\bbeta \right)\right| \le \lambda^\ast, \quad  \left|\bX_{(L)}^\top \left( \by - \bX \widehat\bbeta \right) \right|= \lambda^\ast.
\]
Consequently,
\begin{equation}\label{eq:kkt_compare}
\left| \bX_{(l)}^\top \left( \by - \bX \widehat\bbeta \right)\right| \le \left| \bX_{(L)}^\top \left( \by - \bX \widehat\bbeta \right)\right|.
\end{equation}
Thus, the proof of the present lemma would be finished if we show that, with probability tending to one, \eqref{eq:kkt_compare} cannot be satisfied. 

The rest part of the proof is devoted to disproving \eqref{eq:kkt_compare}. Note that $\bX_{(l)}^\top \by$ satisfies
\[
\begin{aligned}
\bX_{(l)}^\top \by \ge \bX_{(I_{-c})}^\top \by 
&= D \left[\Gamma + \sqrt{\frac{2k\log(k/I_{-c})}{n}} + o_{\P}(1) - \frac{\xi}{D} \right]\\
&= D \left[\Gamma + \sqrt{\frac{2k\log(k/I_{-c})}{n}} + o_{\P}(1)\right]\\
&= D \left[\sqrt{\frac{2k\log(p-k)}{n}} + o_{\P}(1)\right],
\end{aligned}
\]
which, together with Lemma \ref{lm:max_corr}, implies that
\begin{equation}\label{eq:l_boundA}
  \begin{aligned}
\left| \bX_{(l)}^\top \left( \by - \bX \widehat\bbeta \right) \right| &\ge D \left[\sqrt{\frac{2k\log(p-k)}{n}} + o_{\P}(1) - |\bX_{(l)}^\top \bX \widehat\bbeta|/D \right] \\
&= D \left[\sqrt{\frac{2k\log(p-k)}{n}} + o_{\P}(1)\right].
  \end{aligned}
\end{equation}
Similarly, from $I_c +1 \le L \le k$ we get
\begin{equation}\label{eq:L_sandwichA}
\bX_{(I_c+1)}^\top \by \ge \bX_{(L)}^\top \by \ge \bX_{(k)}^\top \by.
\end{equation}
The quantities appearing on the left- and right-hand sides obey, respectively,
\[
\bX_{(I_c+1)}^\top \by =  D \left[\sqrt{\frac{2k\log(p-k)}{n}} - c + o_{\P}(1)\right]
\]
and
\[
\bX_{(k)}^\top \by = -D \left[\sqrt{\frac{2k\log(p-k)}{n}} - \Gamma + o_{\P}(1)\right].
\]
Hence, from \eqref{eq:L_sandwichA} it follows that
\[
D \left[\sqrt{\frac{2k\log(p-k)}{n}} - c + o_{\P}(1)\right] \ge \bX_{(L)}^\top \by \ge -D \left[\sqrt{\frac{2k\log(p-k)}{n}} - \Gamma + o_{\P}(1)\right],
\]
yielding
\begin{equation}\label{eq:L_boundA}
\begin{aligned}
\left| \bX_{(L)}^\top \left( \by - \bX \widehat\bbeta \right)\right| &\le D \left[\sqrt{\frac{2k\log(p-k)}{n}} - c + o_{\P}(1) + |\bX_{(L)}^\top \bX \widehat\bbeta|/D\right]\\
&= D \left[\sqrt{\frac{2k\log(p-k)}{n}} - c + o_{\P}(1)\right]
\end{aligned}
\end{equation}

Last, combining \eqref{eq:l_boundA} and \eqref{eq:L_boundA} shows that, with probability tending to one, \eqref{eq:kkt_compare} cannot be satisfied, as desired.

\end{proof}

\begin{proof}[Proof of Lemma \ref{lm:gamma_1}]
Recall that the definition
\[
\Gamma = \frac{(\bX \bbeta)^\top \by}{\sqrt{k} M \|\by\|}.
\]
Observing that $\|\by\| = (1 + o_{\P}(1)) \sqrt{kM^2 + n\sigma^2}$, we get
\[
\begin{aligned}
\Gamma &= \frac{(\bX \bbeta)^\top \by}{\sqrt{k} M (1 + o_{\P}(1)) \sqrt{kM^2 + n\sigma^2}}\\
& = (1 + o_{\P}(1))\left\langle \frac{\bX \bbeta}{\sqrt{k} M}, ~ \frac{\by}{\sqrt{kM^2 + n\sigma^2}} \right\rangle.
\end{aligned}
\]
Hence, the present lemma is equivalent to
\begin{equation}\label{eq:gamma_tilt}
\left\langle \frac{\bX \bbeta}{\sqrt{k} M}, ~ \frac{\by}{\sqrt{kM^2 + n\sigma^2}} \right\rangle = 1 + o_{\P}(1),
\end{equation}
where $\langle, \rangle$ denotes the usual inter product of vectors. Since
\[
\frac{\bX\bbeta}{\sqrt{k} M} \text{ and } \frac{\by}{\sqrt{kM^2 + n\sigma^2}}
\]
are vectors of unit norm asymptotically, \eqref{eq:gamma_tilt} boils down to claiming that the angle between the two vectors are asymptotically zero. Recognizing that $\bX\bbeta$ and $\bz$ are asymptotically orthogonal since they are independent high-dimensional normal vectors, a vanishing angle between $\bX\bbeta$ and $\by = \bX\bbeta + \bz$ is equivalent to
\[
\frac{\|\bz\|}{\|\bX \bbeta\|} \goto 0.
\]
The display above is a direct consequence of the condition
\[
\frac{\sqrt{n}\sigma}{\sqrt{k} M} \goto 0,
\]
which is provided in the assumptions. Hence, \eqref{eq:gamma_tilt} holds.
\end{proof}

\begin{proof}[Proof of Lemma \ref{lm:dicho_fs}]
Consider the first time a variable among $\bX_{(I_{-c}+1)}, \ldots, \bX_{(k)}$ enters the model. Denote by $m^\diamond$ the rank of this variable. As earlier, call this variable $\bX_{(j_{m^\diamond})}$. Note that, by definition, $I_{-c} + 1 \le j_{m^\diamond} \le k$. Suppose on the contrary that by the time $\bX_{(j_{m^\diamond})}$ is selected, at least one variable among $\bX_{(1)}, \ldots, \bX_{(I_{-2c})}$, say $\bX_{(l)}$, has not been included. Denote by $\C$ this event, on which we must have
\begin{equation}\label{eq:a_event_holds}
\min_{\supp(\bb) = \fsset_{m^\diamond}}\| \by - \bX b\|^2 \le \min_{\supp(\bb) = \fsset_{m^\diamond-1} \cup (l)}\| \by - \bX b\|^2.
\end{equation}
From the proof of Theorem \ref{thm:upper} for the forward stepwise case, we know that
\begin{equation}\label{eq:fs_part}
\begin{aligned}
&\min_{\supp(\bb) = \fsset_{m^\diamond-1} \cup (l)}\| \by - \bX b\|^2 - \min_{\supp(\bb) = \fsset_m^\diamond}\| \by - \bX b\|^2 \\
&= (\bX^\top_{(j_{m^\diamond})} \by + o_{\P}(D))^2 - (\bX^\top_{(l)} \by + o_{\P}(D))^2.
\end{aligned}
\end{equation}
Since $j_{m^\diamond} \ge I_{-c} + 1$, we get
\begin{equation}\label{eq:jm1}
\bX^\top_{(j_{m^\diamond})} \by \le \bX^\top_{(I_{-c})} \by = D \left[\sqrt{\frac{2k\log(p-k)}{n}} + c + o_{\P}(1)\right],
\end{equation}
and from $j_{m^\diamond} \le k$ we get
\begin{equation}\label{eq:jm2}
\begin{aligned}
\bX^\top_{(j_{m^\diamond})} &\by \ge \bX^\top_{(k)} \by \\
&= D \left[\Gamma - \sqrt{\frac{2k\log k}{n}} + o_{\P}(1)\right]\\
& > -D \left[\sqrt{\frac{2k\log(p-k)}{n}} + c + o_{\P}(1)\right],
\end{aligned}
\end{equation}
which makes use of the fact that $0.5 < \Gamma < 1.1$.
Similarly, we have
\begin{equation}\label{eq:jm3}
\bX^\top_{(l)} \by \ge \bX^\top_{(I_{-2c})} \by \ge D \left[\sqrt{\frac{2k\log(p-k)}{n}} + 2c + o_{\P}(1)\right].
\end{equation}
Plugging \eqref{eq:jm1}, \eqref{eq:jm2}, and \eqref{eq:jm3} into \eqref{eq:fs_part} yields
\[
\begin{aligned}
&\min_{\supp(\bb) = \fsset_{m^\diamond-1} \cup (l)}\| \by - \bX b\|^2 - \min_{\supp(\bb) = \fsset_m^\diamond}\| \by - \bX b\|^2 \\
& \le D^2 \left[\sqrt{\frac{2k\log(p-k)}{n}} + c + o_{\P}(1)\right]^2 - D^2 \left[\sqrt{\frac{2k\log(p-k)}{n}} + 2c + o_{\P}(1)\right]^2\\
& < 0,
\end{aligned}
\]
which contradicts \eqref{eq:a_event_holds}. This implies the event $\C$ happens with probability vanishing to zero.

\end{proof}


\subsection{Theorem \ref{thm:unique}}
\label{sec:proof-theor-refthm-1}

\begin{proof}[Proof of Theorem \ref{thm:unique}]
Write
\[
m = \min\left\{ \left\lceil c\sqrt{n/\log p} \right\rceil, p \right\}
\]
for some constant $c$ to be determined later. A variable might be dropped by the lasso only if its fitted coefficient crosses zero. By using this fact, it suffices to show that the fitted coefficients of the selected variables shall never cross zero in the first $m$ selected variables, except for a rare event with probability no more than $1/p^2$. 

Let $\lambda'$ be the first time along the lasso path a previously selected variable is just about to drop out of the model. (The proof shall only focus on the event that such $\lambda'$ exists; otherwise no variable drops out.) Denote by $j$ the number of variables selected just before the first dropout and $\wS$ the set of these $j$ variables. For the sake of contradiction, assume that $j \le m - 1$.

Pick an $\lambda$ that is (slightly) smaller than $\lambda'$ and at which $j$ variables have been included in the model. Observe the following partial KKT condition for the lasso solution:
\begin{equation}\label{eq:lasso_kkt}
-\bX^\top_{\wS} (\by - \bX_{\wS} \lasso_{\wS}) + \lambda\sgn(\lasso_{\wS}) = 0.
\end{equation}
In a componentwise manner, $\sgn(\cdot)$ above returns 1 if the corresponding component is positive and $-1$ if negative. From \eqref{eq:lasso_kkt} it follows that
\begin{equation}\nonumber
\lasso_{\wS}(\lambda) = (\bX^\top_{\wS}\bX_{\wS})^{-1} \bX^\top_{\wS} \by - \lambda (\bX^\top_{\wS}\bX_{\wS})^{-1}\sgn(\lasso_{\wS}(\lambda)).
\end{equation}
Now let us gradually decrease $\lambda$ to $\lambda'$, moving along the lasso path. If $(\bX^\top_{\wS}\bX_{\wS})^{-1}\sgn(\lasso_{\wS})$ has the same sign as $\sgn(\lasso_{\wS})$ across all $|\wS| = j$ coordinates, then in this process of moving $\lambda$ down to $\lambda'$, we see all positive coefficients of $\lasso(\lambda)$ get larger while all negative coefficients become even smaller. This implies that no coefficient will cross zero, a contradiction to the assumption. Therefore, $(\bX^\top_{\wS}\bX_{\wS})^{-1}\sgn(\lasso_{\wS})$ differs from $\sgn(\lasso_{\wS})$ in the sign of at least one coordinate, yielding
\begin{equation}\label{eq:sign_diff_b}
\left\| (\bX^\top_{\wS}\bX_{\wS})^{-1}\sgn(\lasso_{\wS}) - \sgn(\lasso_{\wS}) \right\| \ge 1.
\end{equation}
On the other hand, we have
\begin{equation}\label{eq:use_rip}
\begin{aligned}
\left\| (\bX^\top_{\wS}\bX_{\wS})^{-1}\sgn(\lasso_{\wS}) - \sgn(\lasso_{\wS}) \right\| &\le \left\| (\bX^\top_{\wS}\bX_{\wS})^{-1} - \bm I \right\| \left\| \sgn(\lasso_{\wS}) \right\| \\
&\le  \left\| (\bX^\top_{\wS}\bX_{\wS})^{-1} - \bm I \right\| \sqrt{m-1}\\
&\le  \left\| \bX^\top_{\wS}\bX_{\wS} - \bm I \right\| \left\| (\bX^\top_{\wS}\bX_{\wS})^{-1} \right\| \sqrt{m-1}. 
\end{aligned}
\end{equation}
Let $\theta \in (0, 1)$ be the restricted isometry constant for $(m-1)$-sparse vectors, which is defined as the smallest $\theta$ such that
\begin{equation}\nonumber
(1 - \theta) \|\bb\|^2 \le \|\bX \bb\|^2 \le (1 + \theta) \|\bb\|^2
\end{equation}
for all $(m-1)$-sparse $\bb \in \R^p$ (we use the notation $\theta$ instead of the conventional $\delta$ since the latter has been reserved for denoting the ratio $n/p$). It is not hard to see that this definition yields the following more amenable expression
\begin{equation}\label{eq:rip}
\theta = \max_{|U| \le m-1} \|\bX^\top_U \bX_U - \bm I\|,
\end{equation}
where the maximum is taken over all subsets of $\{1, 2, \ldots, p\}$ with cardinality smaller than $m$ and $\bX^\top_U$ denotes $(\bX_U)^\top$. In the case where $m \ge 2$ (if $m = 1$ then this theorem is trivially true), a well known result regarding this constant states
\begin{equation}\label{eq:small_rip_event}
\theta \le C\sqrt{\frac{(m-1)\log(p/(m-1))}{n}}
\end{equation}
with probability at least $1 - 1/p^2$, where $C > 0$ is a universal constant (see, e.g., Theorem 5.2 in \cite{baraniuk2008simple}; note that $\theta$ is random due to its dependence on $\bX$).

Now we prove that on the event \eqref{eq:small_rip_event}, the assumption $j \le m - 1$ cannot hold with a suitable of the constant $c$. To this end, we start by observing that \eqref{eq:rip} together with the assumption $|\wS| = j \le m - 1$ ensures that all the eigenvalues of $\bX^\top_{\wS}\bX_{\wS}$ are between $1 - \theta$ and $1 + \theta$. Thus, from \eqref{eq:use_rip} we get
\[
\begin{aligned}
\left\| (\bX^\top_{\wS}\bX_{\wS})^{-1}\sgn(\lasso_{\wS}) - \sgn(\lasso_{\wS}) \right\| &\le \left\| \bX^\top_{\wS}\bX_{\wS} - \bm I \right\| \left\| (\bX^\top_{\wS}\bX_{\wS})^{-1} \right\| \sqrt{m-1}\\
&\le \frac{\theta}{1 - \theta}\sqrt{m-1},
\end{aligned}
\]
Substituting \eqref{eq:sign_diff_b} into the above display, under the assumption that $j \le m - 1$ we see that
\begin{equation}\label{eq:want_wrong}
\frac{C\sqrt{\frac{(m-1)\log(p/(m-1))}{n}}}{ 1- C\sqrt{\frac{(m-1)\log(p/(m-1))}{n}}} \cdot \sqrt{m-1} \ge \frac{\theta}{1 - \theta} \sqrt{m-1} \ge 1
\end{equation}
holds on the event \eqref{eq:small_rip_event}. If we can show that
\begin{equation}\label{eq:very_wrong}
\frac{C\sqrt{\frac{(m-1)\log(p/(m-1))}{n}}}{ 1- C\sqrt{\frac{(m-1)\log(p/(m-1))}{n}}} \cdot \sqrt{m-1} < 1,
\end{equation}
meaning that \eqref{eq:want_wrong} cannot be satisfied, then the assumption that $j \le m -1$ must be violated and, consequently, the first $m$ variables are included along the lasso path without any drop-out with probability at least $1 - 1/p^2$ (note that the event \eqref{eq:small_rip_event} happens with probability at least $1 - 1/p^2$). Indeed, \eqref{eq:very_wrong} is true if we set (note that we have excluded the trivial case $n < \log p$)
\[
c = \min\left\{1, \frac1{4C^2}, \frac1{2C}\right\}.
\]
This concludes the proof.
\end{proof}


\subsection{Proposition \ref{prop:diagram}}
\label{sec:proof-theorem-4}

\begin{proof}[Proof of Proposition \ref{prop:diagram}]
Note that the least-squares estimator takes the form:
\[
\begin{aligned}
\widehat{\bbeta}^{\textnormal{LS}} &= (\bX^\top \bX)^{-1} \bX^\top\by\\
&= \bbeta + (\bX^\top \bX)^{-1} \bX^\top\bz,
\end{aligned}
\]
which is, conditional on $\bX$, distributed as
\[
\frac{\widehat{\beta}^{\textnormal{LS}}_j}{\sigma\sqrt{[(\bX^\top \bX)^{-1}]_{jj}}} \sim \N(0,1)
\]
for all $j \notin S$ and
\[
\frac{\widehat{\beta}^{\textnormal{LS}}_j}{\sigma\sqrt{[(\bX^\top \bX)^{-1}]_{jj}}} \sim \N(\mu_j,1)
\]
for $j \in S$. Above,
\[
\mu_j = \frac{M}{\sigma\sqrt{[(\bX^\top \bX)^{-1}]_{jj}}}.
\]

Take the following result as given for the moment:
\begin{equation}\label{eq:diag}
\max_{1 \le j \le p} \left[ (\bX^\top \bX)^{-1} \right]_{jj} \le \frac{n}{n - p} + c
\end{equation}
with probability approaching one for an arbitrary constant $c > 0$. Then, we can show that
\begin{equation}\label{eq:min_max}
\min_{j \in S} \left| \frac{\widehat{\beta}^{\textnormal{LS}}_j}{\sigma\sqrt{[(\bX^\top \bX)^{-1}]_{jj}}} \right| > \max_{j \notin S}\left|\frac{\widehat{\beta}^{\textnormal{LS}}_j}{\sigma\sqrt{[(\bX^\top \bX)^{-1}]_{jj}}} \right|.
\end{equation}
To see this, note that
\[
\min_{j \in S} \left| \frac{\widehat{\beta}^{\textnormal{LS}}_j}{\sigma\sqrt{[(\bX^\top \bX)^{-1}]_{jj}}} \right| 
\ge \min_{j \in S} \frac{M}{\sigma\sqrt{[(\bX^\top \bX)^{-1}]_{jj}}} - \sqrt{2\log k}
\]
and
\[
\max_{j \notin S} \left| \frac{\widehat{\beta}^{\textnormal{LS}}_j}{\sigma\sqrt{[(\bX^\top \bX)^{-1}]_{jj}}} \right| \le \sqrt{2\log(p-k)}
\]
with probability approaching one. Hence, \eqref{eq:min_max} follows if one can show that
\[
\min_{j \in S} \frac{M}{\sigma\sqrt{[(\bX^\top \bX)^{-1}]_{jj}}} - \sqrt{2\log k} -  \sqrt{2\log(p-k)} > 0.
\]
In fact, from \eqref{eq:diag} we have
\[
\begin{aligned}
&\min_{j \in S} \frac{M}{\sigma\sqrt{[(\bX^\top \bX)^{-1}]_{jj}}} - \sqrt{2\log k}  -  \sqrt{2\log(p-k)} \\
& > \min_{j \in S} \frac{M}{\sigma\sqrt{[(\bX^\top \bX)^{-1}]_{jj}}} - 2\sqrt{2\log p} \\
& \ge \frac{M}{\sigma\sqrt{\frac{n}{n - p} + c}} - 2\sqrt{2\log p}\\
& \ge \frac{M}{\sigma\sqrt{\frac{\delta}{\delta - 1} + c}} - 2\sqrt{2\log p}\\
& \ge \frac{3\sqrt{\frac{2\delta\log p}{\delta - 1}}}{\sqrt{\frac{\delta}{\delta - 1} + c}} - 2\sqrt{2\log p}\\
& = 3\sqrt{2\log p}  \cdot \sqrt{\frac{\delta/(\delta - 1)}{\delta/(\delta-1) + c}} - 2\sqrt{2\log p},
\end{aligned}
\]
which is positive by setting $c$ sufficiently small. Above, recall that $\delta = n/p$. Hence, \eqref{eq:min_max} holds, meaning that all the true variables are vertically ranked higher than any of the false variables. In particular, the first false variable is vertically ranked lower than all of the true variables.

In the remaining part of the proof, our aim is to prove \eqref{eq:diag}. Write the singular value decomposition of $\bX$ as
\[
\bX = \bU \bD \bV^\top,
\]
where $\bU \in \R^{n \times n}$ and $\bV \in \R^{p \times p}$ are orthogonal matrices, and $\bD \in \R^{n \times p}$ is diagonal. Then, the left-hand side of \eqref{eq:diag} can be expressed as
\[
\left[ (\bX^\top \bX)^{-1} \right]_{jj} = \be_j^\top \bV (\bD^\top \bD)^{-1} V^\top \be_j,
\]
where $\be_j$ denotes the $j$th canonical basis vector in Euclidean space. Using this fact, we get
\[
\begin{aligned}
&\P\left( \max_{1 \le j \le p} \left[ (\bX^\top \bX)^{-1} \right]_{jj} \ge \frac{n}{n - p} + c \right)\\
&  = \P\left( \max_{1 \le j \le p} \be_j^\top \bV (\bD^\top \bD)^{-1} V^\top \be_j \ge \frac{n}{n - p} + c \right)\\
&  = \E \P\left( \max_{1 \le j \le p} \be_j^\top \bV (\bD^\top \bD)^{-1} V^\top \be_j \ge \frac{n}{n - p} + c \Big | \bD\right)\\
&  \le \E \min\left\{  \sum_{j=1}^p \P\left( \be_j^\top \bV (\bD^\top \bD)^{-1} V^\top \be_j \ge \frac{n}{n - p} + c \Big | \bD\right), 1 \right\}\\
&= \E \min\left\{  p\P\left( \be_1^\top \bV (\bD^\top \bD)^{-1} V^\top \be_1 \ge \frac{n}{n - p} + c \Big | \bD\right), 1 \right\},
\end{aligned}
\]
where the last step makes use of the exchangeability of $\bV^\top \be_1, \ldots, \bV^\top \be_p$ given $\bD$. Therefore, writing $\bm{\eta}  = \bV^\top \be_1$, it suffices to prove that
\begin{equation}\nonumber
\E \min\left\{  p\P\left( \bm{\eta}^\top (\bD^\top \bD)^{-1} \bm{\eta} \ge \frac{n}{n - p} + c \Big | \bD\right), 1 \right\} \goto 0.
\end{equation}

Recognizing that $\bX$ has independent $\N(0, 1/n)$ entries, it is known that $\bm\eta$ is independent of $\bD$ and is uniformly distributed on the unit sphere in $\R^p$. In particular, $\bm\eta$ assumes the following representation
\[
\bm\eta = \frac{(\zeta_1, \ldots, \zeta_p)^\top}{\sqrt{\zeta_1^2 + \cdots + \zeta_p^2}} = \frac{\bm{\zeta}}{\sqrt{\zeta_1^2 + \cdots + \zeta_p^2}},
\]
where $\zeta_1, \ldots, \zeta_p$ are independent $\N(0,1)$ random variables. Using this representation, we get
\begin{equation}\label{eq:p_small_se}
\begin{aligned}
&\P\left( \bm{\eta}^\top (\bD^\top \bD)^{-1} \bm{\eta} \ge \frac{n}{n - p} + c \Big | \bD\right)\\
&= \P\left( \bm{\zeta}^\top (\bD^\top \bD)^{-1} \bm{\zeta} \ge \|\bm{\zeta}\|^2 \left(\frac{n}{n - p} + c\right) \Bigg | \bD\right)\\
&\le \P\left( \bm{\zeta}^\top (\bD^\top \bD)^{-1} \bm{\zeta} \ge (p - p^{0.75}) \left(\frac{n}{n - p} + c\right) \Bigg | \bD\right) + \P(\|\bm\zeta\|^2 \le p - p^{0.75}).
\end{aligned}
\end{equation}
Setting $t = \sqrt{p}$, Assumption \ref{ass:working} ensures that the following inequality must eventually hold:
\[
(p - p^{0.75}) \left(\frac{n}{n - p} + c\right) \ge p\left(\frac{n}{n - p} + \frac{c}{2} \right) + (2\sqrt{pt} + 2t) \times \left( \frac{\delta}{(\sqrt{\delta} - 1)^2} + c \right).
\]
Plugging the display above into \eqref{eq:p_small_se} gives
\[
\begin{aligned}
&\P\left( \bm{\eta}^\top (\bD^\top \bD)^{-1} \bm{\eta} \ge \frac{n}{n - p} + c \Big | \bD\right)\\
& \le \P\left( \bm{\zeta}^\top (\bD^\top \bD)^{-1} \bm{\zeta} \ge p\left(\frac{n}{n - p} + \frac{c}{2} \right) + (2\sqrt{pt} + 2t) \times \left( \frac{\delta}{(\sqrt{\delta} - 1)^2} + c \right) \Bigg | \bD\right) + \P(\|\bm\zeta\|^2 \le p - p^{0.75}).
\end{aligned}
\]
Next, denote by
\[
\A = \left\{ \tr((\bD^\top \bD)^{-1}) \ge p\left(\frac{n}{n - p} + \frac{c}{2} \right) \text{ or } \|(\bD^\top \bD)^{-1}\| > \frac{\delta}{(\sqrt{\delta} - 1)^2} + c \right\}
\]
and write $\mathbf{1}_{\A}$ for the indicator function of $\A$ 
defined as
\[
{\displaystyle \mathbf{1} _{\A} :={\begin{cases}1&{\text{if the event } \A \text{ holds}},\\0&{\text{otherwise}}.\end{cases}}}
\]
Then, it is easy to see that
\[
\begin{aligned}
&\P\left( \bm{\zeta}^\top (\bD^\top \bD)^{-1} \bm{\zeta} \ge p\left(\frac{n}{n - p} + \frac{c}{2} \right) + (2\sqrt{pt} + 2t) \times \left( \frac{\delta}{(\sqrt{\delta} - 1)^2} + c \right) \Bigg | \bD\right)\\
&\le \mathbf{1}_{\A} + \P\left( \bm{\zeta}^\top (\bD^\top \bD)^{-1} \bm{\zeta} \ge \tr((\bD^\top \bD)^{-1}) + (2\sqrt{pt} + 2t)\|(\bD^\top \bD)^{-1}\| \Bigg | \bD\right)\\
&\le \mathbf{1}_{\A}  + \e^{-t},
\end{aligned}
\]
where the last step follows from a Gaussian concentration inequality (see, for example, \citet{hsu2012tail}). As a result, it suffices to show that
\begin{equation}\nonumber
\E \min \left\{ p\mathbf{1}_{\A} + p\e^{-t} + p \P(\|\bm\zeta\|^2 \le p - p^{0.75}), 1 \right\} \rightarrow 0.
\end{equation}

To this end, first note that from
\[
\frac1{(1 - \sqrt{p/n})^2} + c < \frac{\delta}{(\sqrt{\delta} - 1)^2} + c
\]
we get
\begin{equation}\label{eq:d_spec}
\P\left(\|(\bD^\top \bD)^{-1}\| > \frac{\delta}{(\sqrt{\delta} - 1)^2} + c \right) \le \P\left(\|(\bD^\top \bD)^{-1}\| > \frac1{(1 - \sqrt{p/n})^2} + c\right) \goto 0, 
\end{equation}
where we use the fact that the smallest singular value of the Wishart matrix $\bX^\top \bX$ is concentrated at $(1 - \sqrt{p/n})^2$ with probability tending to one (see, for example, \citet{vershynin}). Second, it is known that
\[
\tr((\bD^\top \bD)^{-1}) = (1 + o_{\P}(1)) \frac{np}{n - p},
\]
which implies
\begin{equation}\label{eq:trace}
\P\left( \tr((\bD^\top \bD)^{-1}) \ge p\left(\frac{n}{n - p} + \frac{c}{2} \right) \right) \goto 0.
\end{equation}
Taking \eqref{eq:d_spec} and \eqref{eq:trace} together, we get
\begin{equation}\label{eq:a_zero}
\P(\A) \goto 0.
\end{equation}
Recognizing that $t = \sqrt{p}$, we get $p\e^{-t} \goto 0$. In addition, $p \P(\|\bm\zeta\|^2 \le p - p^{0.75}) \goto 0$. Hence, from \eqref{eq:a_zero} we have
\[
\begin{aligned}
\min \left\{ p \mathbf{1}_{\A}  + p\e^{-t} + p \P(\|\bm\zeta\|^2 \le p - p^{0.75}), 1 \right\} \rightarrow 0
\end{aligned}
\]
in probability. In addition, note that
\[
\min \left\{ p \mathbf{1}_{\A} + p\e^{-t} + p \P(\|\bm\zeta\|^2 \le p - p^{0.75}), 1 \right\}
\]
is upper bounded by $1$, which is an integrable function. Hence, by the dominated convergence theorem, we get
\[
\E \min \left\{ p\mathbf{1}_{\A} + p\e^{-t} + p \P(\|\bm\zeta\|^2 \le p - p^{0.75}), 1 \right\} \rightarrow 0,
\]
as desired.
\end{proof}



\end{document}